\newtheorem{theo}{Theorem}
\newtheorem{prop}{Proposition}[section]
\newtheorem{lem}{Lemma}[section]
\newtheorem{defi}{Definition}[section]
\newtheorem{rem}{Remark}[section]
\newcommand{\LOG}[1]{ \log \left( e + \frac{1}{ #1 } \right) }
\newcommand{\zygm}[2]{ \mathcal{L}^{ #1 ,\infty}\left(\log\mathcal{L}\right)^{ #2 } }
\newcommand{\unifz}[2]{ \mathcal{L}_{\rm ul}^{ #1 ,\infty}\left(\log\mathcal{L}\right)^{ #2 } }
\title{On a Fujita critical time-fractional semilinear heat equation in the uniformly local weak Zygmund type space}
\date{}
\author{Mizuki Kojima
\thanks{The author was supported by JST SPRING, Japan Grant Number JPMJSP2106.}
\thanks{E-mail: \texttt{kojima.m.aq@m.titech.ac.jp}}	\\
		Department of Mathematics, Tokyo Institute of Technology.
		}
\begin{document}
	\maketitle
	\begin{abstract}

	In this paper, we derive sufficient conditions on initial data for the
	local-in-time solvability of a time-fractional semilinear heat equation
	with the Fujita exponent in a uniformly local weak Zygmund type space.
	It is known that the time-fractional problem with the Fujita exponent in the scale
	critical space $L^1(\mathbb{R}^N)$ exhibits the local-in-time solvability in contrast to the unsolvability of the Fujita critical classical semilinear heat equation.
	Our new sufficient conditions take into account the fine structure of
	singularities of the initial data, in order to show a natural
	correspondance between the time-fractional and the classical case for the
	local-in-time solvability.
	We also apply our arguments to life span estimates for some typical initial data. 
	
	\textit{Key Words and Phrases.} Fractional differential equation, semilinear heat equation, weak Zygmund type space, life span estimate
	
	\textit{2020 Mathematics Subject Classification Numbers.} Primary 35K15, 35K58; Secondary 35R11.
\end{abstract}

\section{Introduction}

In this paper, we consider the Cauchy problem for the time-fractional semilinear heat equation
\begin{equation}\label{eq. TFF}
\left\{
\begin{aligned}
{}^{C}\partial_t^{\alpha} u - \Delta u &= |u|^{p-1}u\ \ &&\mbox{in}\ (0,T)\times \mathbb{R}^N,\\
u(0)&=\varphi\ \ &&\mbox{in}\ \mathbb{R}^N
\end{aligned}
\right.
\end{equation}
where $N\ge 1$, $p>1$ and ${}^{C}\partial_t^{\alpha}$ denotes the Caputo derivative of order $\alpha\in (0,1)$:
\[
{}^{C}\partial_t^{\alpha}f(t):=\frac{1}{\Gamma(1-\alpha)}\frac{\partial}{\partial t}\int_{0}^{t}(t-s)^{-\alpha}\left( f(s)-f(0) \right)ds.
\]
In particular, we are concerned with the solvability of \eqref{eq. TFF} with the Fujita critical exponent $p=p_F:=1+2/N$. When $\alpha=1$, problem \eqref{eq. TFF} formally coincides with the classical Fujita-type problem in which the derivative with respect to time is the classical one:
\begin{equation}\label{eq. Fujita}
\partial_tu-\Delta u = |u|^{p-1}u, \quad t>0,\ x\in \mathbb{R}^N, \quad \quad u(0)=\varphi\ \mbox{in}\ \mathbb{R}^N,
\end{equation}
which has been extensively studied by many mathematicians since the pioneering work by Fujita \cite{Fujita66}. We just refer to the related studies \cite{Fujita66, Hayakawa73, KoSiTa77, BarasPierre85,BreCaz96,FHIL23,FujiIoku18,HisaIshige18,KozoYama94,Miya21,Wei80} and the comprehensive monograph \cite{QuitSoup19}. The Fujita exponent $p_F$ is the threshold of the global-in-time solvability of \eqref{eq. Fujita}. More precisely, the condition $1<p\le p_F$ implies the nonexistence of global-in-time nonnegative solutions of \eqref{eq. Fujita}, while $p_F<p$ guarantees the global-in-time solvability for appropriately small initial data.

The Fujita exponent also has a key role in the local-in-time solvability of problem \eqref{eq. Fujita}. Let $q_c:=N(p-1)/2$, which is a scale critical exponent of \eqref{eq. Fujita}. By the Fujita-Kato principle, it is important to consider the well-posedness in the scaling critical space. In fact, the problem \eqref{eq. Fujita} is locally-in-time well-posed in its scale critical space $L^{q_c}(\mathbb{R}^N)$, under the assumption $q_c>1$. However, the local-in-time solvability in $L^{q_c}(\mathbb{R}^N)$ fails when $q_c=1$.  Note that $q_c=1$ means that $p=p_F$. That is, there exists a nonnegative function $\varphi \in L^1(\mathbb{R}^N)$ such that, problem \eqref{eq. Fujita} with $p=p_F$ and $u(0)=\varphi\ge 0$ admits no local-in-time nonnegative solutions. See e.g.\ \cite{BarasPierre85,BreCaz96,HisaIshige18,Miya21}. Hence, the Fujita critical problem (i.e., problem \eqref{eq. Fujita} with $p=p_F$) exhibits unsolvability in its scale critical space $L^{1}(\mathbb{R}^N)$. Such a situation is often called \textit{doubly critical} (simultaneous occurrence of the Fujita critical and the scale critical situations).

We go back to the time-fractional problem \eqref{eq. TFF}. By Zhang and Sun \cite{ZhanSun15}, it is known that the Fujita exponent is common to \eqref{eq. TFF} and \eqref{eq. Fujita}, namely, the global-in-time solvability of \eqref{eq. TFF} fails if $1<p<p_F$, while $p_F\le p$ implies the existence of a global-in-time solution for appropriate initial data. We emphasize that a global solution can exist even if $p=p_F$. Moreover, problem \eqref{eq. TFF} is also scale critical in $L^{q_c}(\mathbb{R}^N)$, and its local-in-time solvability in its scale critical space is guaranteed for $q_c\ge 1$. See e.g.\ \cite{ZLS19,GMS22}. We must note that the local-in-time solvability of \eqref{eq. TFF} under the doubly critical situation is verified. For detailed topics for the time-fractional Fujita problem, see e.g.\ \cite{Podl99, GMS22, GalWar20, LuchYama19, Mainardi1994, ZhanSun15, ZLS19, OkaZhan23, HisaKojima24, CarFerNoe24} and references therein.

\begin{table}[h]\label{tab. comparison}
	\caption{Comparison of the two problems with $p=p_F$.}
	\centering
	\begin{tabular}{cc}
		\hline
		Problem & Local-in-time solvability in its scale critical space $L^1$ \rule[0mm]{0mm}{5mm}\\
		\hline \hline
		\eqref{eq. Fujita} with $p=p_F$ & 
		No nonnegative solutions for some $\varphi\in L^1(\mathbb{R}^N)$ with $\varphi\ge 0$.
		\rule[0mm]{0mm}{5mm}\\
		\eqref{eq. TFF} with $p=p_F$ & There exists a solution for all $ \varphi\in L^1(\mathbb{R}^N)$.
		\rule[0mm]{0mm}{5mm}\\  
		\hline
	\end{tabular}
\end{table}

Based on these significant differences as for the global and local-in-time solvability of Fujita critical problems, Hisa and the author \cite{HisaKojima24} investigated the \textit{collapse} of the solvability of \eqref{eq. TFF} with $p=p_F$ when $\alpha \nearrow 1$. In this research, a necessary and condition a sufficient one for the solvability of \eqref{eq. TFF} are obtained. Let $B(z;r)$ be a ball centered at $z\in\mathbb{R}^N$ with radius $r>0$.

\begin{itemize}
	\item (Necessity)\cite[Theorem~1.2]{HisaKojima24} Suppose that $\varphi\ge 0$ and problem \eqref{eq. TFF} with $p=p_F$ possesses a nonnegative solution in $(0,T)\times\mathbb{R}^N$ for some $T>0$.
	Then there exists a constant $\gamma_1=\gamma_1(N,\alpha)>0$ satisfying $\limsup_{\alpha \nearrow1} \gamma_1(N,\alpha)\in(0,\infty)$ such that
	\begin{equation}\label{eq. Hisa-Kojima necesarry}
	\sup_{z\in\mathbb{R}^N} \int_{B(z;\sigma)} \varphi(x)\,dx \le \gamma_1 \left(\int_{\sigma^{2/\alpha}/(16T)}^{1/4} t^{-\alpha} \, dt \right)^{-\frac{N}{2}}\ \ \text{for all}\ \ \sigma \in (0,T^{\frac{\alpha}{2}}).
	\end{equation}
	
	\item (Sufficiency)\cite[Theorem~1.4]{HisaKojima24} Let $p=p_F$ and $T>0$. Then there exists a constant $\gamma_2=\gamma_2(N,\alpha)>0$ satisfying $\limsup_{\alpha\nearrow1} \gamma_2(N, \alpha)\in(0,\infty)$ such that, 
	if $\varphi\ge 0$ satisfies
	\begin{equation}\label{eq. Hisa-Kojima sufficient}
	\sup_{z\in\mathbb{R}^N} \int_{B(z;T^{\frac{\alpha}{2}})} \varphi(x)\,dx\le  \gamma_2 (1-\alpha)^{\frac{N}{2}},
	\end{equation}
	then problem \eqref{eq. TFF} possesses a solution in $(0,T)\times\mathbb{R}^N$.
\end{itemize}

Through these arguments, in \cite{HisaKojima24}, the connection between the solvability of \eqref{eq. TFF} for any $L^1$ initial data, and the unsolvability of \eqref{eq. Fujita} for certain singular initial data, is characterized through life span estimates. We observe that the necessary condition \eqref{eq. Hisa-Kojima necesarry} formally converges to the corresponding condition for the classical Fujita type problem \eqref{eq. Fujita} as $\alpha \nearrow 1$. Indeed, for the classical problem \eqref{eq. Fujita} with initial data $\varphi\ge 0$ and $p=p_F$, suppose that there exists a solution in $(0,T)\times \mathbb{R}^N$ for $0<T<\infty$. Then, there exists a constant $\gamma=\gamma(N)$ such that $\varphi\ge 0$ satisfies
\begin{equation}\label{eq. Hisa-Ishige}
\sup_{z\in\mathbb{R}^N} \int_{B(z;\sigma)} \varphi(x)\,dx \le \gamma \left(
\log\left( e+ \frac{T^{ \frac{1}{2} }}{\sigma} \right) \right)^{-\frac{N}{2}}\ \ \text{for all}\ \ \sigma \in (0,T^{\frac{1}{2}}).
\end{equation}
See \cite{BarasPierre85, HisaIshige18} and see also \cite{IshiKawaOka2020, FHIL23, HIT18, LaiSie21, Takahashi16} for more related topics. It is easily observed that the limit of the right hand side of \eqref{eq. Hisa-Kojima necesarry} as $\alpha \nearrow 1$ coincides with the right hand side of \eqref{eq. Hisa-Ishige}.

In contrast to the reasonable correspondence between the necessary conditions \eqref{eq. Hisa-Kojima necesarry} and \eqref{eq. Hisa-Ishige}, the right hand side of the sufficient condition \eqref{eq. Hisa-Kojima sufficient} vanishes when $\alpha \nearrow 1$. That is, \eqref{eq. Hisa-Kojima sufficient} does not yield any reasonable result when $\alpha \nearrow 1$, although there exists $L^1(\mathbb{R}^N)$ initial data which satisfies local-in-time solvability of \eqref{eq. Fujita} with $p=p_F$. In fact, the optimal singularity of the initial data for the local-in-time solvability of \eqref{eq. Fujita} with $p=p_F$ has been identified. See e.g.\ \cite{HisaIshige18, Miya21}.
Therefore, the aim of this paper is to deduce a sufficient condition for the solvability of \eqref{eq. TFF} which yields reasonable meaning even when $\alpha\nearrow 1$. More precisely, we shall obtain a sufficient condition that corresponds to a known optimal sufficient condition of the local-in-time solvability of \eqref{eq. Fujita} when $\alpha \nearrow 1$. Moreover, as an application, we shall show that the solvability of \eqref{eq. TFF} corresponds to the solvability of \eqref{eq. Fujita} through sharp life span estimates for certain initial data, whereas \cite{HisaKojima24} shows the correspondence between the solvability of \eqref{eq. TFF} and the \textit{unsolvability} of \eqref{eq. Fujita}.

We shall explain a difficulty to investigate the solvability of \eqref{eq. TFF} with $p=p_F$. Although some previous works study the local-in-time solvability of \eqref{eq. Fujita} with singular initial data (e.g.\ \cite{HisaIshige18,Miya21}), these methods cannot be directly applied to \eqref{eq. TFF}. The main tool of those works is the supersolution method in which we need to verify that the certain function is a supersolution of the problem. However, due to the nonlocality of the time-fractional derivative, it is difficult to construct the supersolution of problem \eqref{eq. TFF} by the direct application of the previous method.

To overcome this difficulty, it seems that the contraction mapping argument is the suitable method since we can use the semigroup estimate, even for the time-fractional problem. Recently, Ioku, Ishige, and Kawakami \cite{IokuIshigeKawakami} constructed the framework, in which they introduced the functional space called the uniformly local weak Zygmund type space, to treat the Fujita critical problem \eqref{eq. Fujita} in the functional analytic method (see also \cite{FujiIshiKawa24,IshiKawaTakada24}). Their framework seems to be the suitable tool for the problem \eqref{eq. TFF} with $p=p_F$. However, the semigroup estimate that is obtained in \cite{IokuIshigeKawakami} is not sufficient in its application to \eqref{eq. TFF}, due to the nonlocality with respect to time. We shall give a more detailed explanation about this point before Theorem~\ref{theo. semigroup estimate}.
	\vskip\baselineskip
	
Before stating main results, we introduce some notation and definition. For $t>0$ and $x\in\mathbb{R}^N$, let $G_t(x)$ be the fundamental solution of the heat equation in $(0,\infty)\times\mathbb{R}^N$, that is
\[
G_t(x) := (4\pi t)^{-\frac{N}{2}}\exp\left(-\frac{|x|^2}{4t}\right).
\]
For $t>0$, $x\in\mathbb{R}^N$, and a function $\mu$ on $\mathbb{R}^N$, define
\[
\left( e^{t\Delta} \mu \right)(x) := \int_{\mathbb{R}^N} G_t( x-y) \mu (y)\,dy.
\]
Let
\begin{equation}\label{eq. defi of P and S}
P_{\alpha}(t)\mu := \int_0^\infty h_\alpha (\theta) e^{t^\alpha \theta \Delta}\mu d\theta, \quad S_{\alpha} (t)\mu := \int_0^\infty \theta h_\alpha (\theta ) e^{t^\alpha\theta \Delta} \mu d\theta,
\end{equation}
for $t>0$ and $0<\alpha<1$. Here, $h_\alpha$ is a probability density function so-called the Wright type function on $(0,\infty)$ which satisfies $h_\alpha(\theta)>0$ for $\theta>0$. In addition, we have
\begin{equation}\label{eq. integral of halpha}
\int_0^\infty \theta^{\delta} h_\alpha(\theta) \, d\theta = \frac{\Gamma(1+\delta)}{\Gamma(1+\alpha\delta)}<\infty \quad \mbox{for} \quad \delta>-1.
\end{equation}

We formulate the definition of solutions of problem \eqref{eq. TFF}. 


\begin{defi}
	\label{defi. solution}
	Let $0<T\le \infty$ and $u$ be a measurable and finite function almost everywehre in $(0,T)\times \mathbb{R}^N$. For given $\varphi$, we say that $u$ is a solution of \eqref{eq. TFF} in $(0,T)$ if $u$ satisfies
	\[
	u(t,x) = \left(P_\alpha(t)\varphi\right) (x) +\alpha\int_0^t (t-s)^{\alpha-1} \left(S_\alpha(t-s)|u(s)|^{p-1}u(s)\right)(x) \,ds
	\]
	for almost all $t\in(0,T)$ and $x\in\mathbb{R}^N$. If $u$ satisfies the above equality with $=$ replaced by $\ge$, then $u$ is said to be a supersolution of \eqref{eq. TFF} in $(0,T)$.
\end{defi}


Next, we define the weak Zygmund type space which is introduced in \cite{IokuIshigeKawakami} by
\[
\zygm{q}{\gamma}:= \left\{ f\in L^1_{\rm loc}; \| f\|_{\zygm{q}{\gamma}}<\infty \right\},
\]
where
\begin{equation}\label{eq. definition of Zygmund norm}
\|f\|_{\zygm{q}{\gamma}}:=\left\{
\begin{aligned}
& \sup_{s>0} \left[ \left( \LOG{s} \right)^{\gamma}\sup_{|E|=s} \int_{E} |f(x)|^{q}\,dx \right]^{\frac{1}{q}}\ \ &&\mbox{if}\ \ 1\le q<\infty,\\
&\|f\|_{L^{\infty}(\mathbb{R}^N)}\ \ &&\mbox{if}\ \ q=\infty.
\end{aligned}
\right.
\end{equation}
Moreover, let
\[
\|f\|_{q,\gamma;\rho}:= \sup_{z\in\mathbb{R}^N} \|f\chi_{B(z;\rho)}\|_{\zygm{q}{\gamma}}\quad \text{and}\quad \unifz{q}{\gamma}:= \left\{ f\in L^1_{\rm loc}; \|f\|_{q,\gamma;1}<\infty \right\},
\]
where $\chi_{E}$ is a characteristic function of a measureble set $E$. It is verified that $\zygm{q}{\gamma}$ and $\unifz{q}{\gamma}$ are Banach spaces (see \cite[Lemma~2.1]{IokuIshigeKawakami}).

Now we are ready to state our main results.

\begin{theo}\label{theorem 1}
	Let $p=p_F$. For given $\varphi \in \unifz{1}{\gamma}$ and $\gamma\ge 0$, one has the following: There exists $C=C(N,\gamma)$ such that
	\begin{equation}\label{eq. main vanish}
	\| \varphi\|_{1,\gamma;T^{\frac{\alpha}{2}}}\le C \alpha^{\frac{N}{2}} (1-\alpha)^{\frac{N}{2}} \left( \LOG{T} \right)^{\gamma}
	\end{equation}
	implies the existence of a solution $u$ of \eqref{eq. TFF} on $(0,T)$ which satisfies for any $\tilde{\gamma}\in [0, \gamma)$,
	\begin{equation}\label{eq. regularity(i)}
	\sup_{0<t<T} \|u(t)\|_{1,\gamma;T^{\frac{\alpha}{2}}}+ \sup_{0<t<T} t^{\frac{\alpha N}{2}\left(1-\frac{1}{p}\right) }\left( \LOG{t} \right)^{\gamma -\frac{\tilde{\gamma}}{p}} \| u(t)\|_{p,\tilde{\gamma};T^{\frac{\alpha}{2}}}\le C\| \varphi\|_{1,\gamma;T^{\frac{\alpha}{2}}},
	\end{equation}
	where $C=C(N, \gamma, \tilde{\gamma})$. Moreover, there exists $C=C(N,\gamma)$ such that
	\begin{equation}\label{eq. initial valure (i)}
	\lim_{t\searrow 0} \| u(t)- P_{\alpha}(t) \varphi\|_{1,\gamma;T^{\frac{\alpha}{2}}}\le C\alpha^{\frac{N}{2}}(1-\alpha)^{\frac{N}{2}} \left( \LOG{T} \right)^{\gamma} \left( \LOG{t} \right)^{\gamma(1-p)}.
	\end{equation}
\end{theo}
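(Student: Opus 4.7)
The plan is to construct the solution by a contraction mapping argument applied to the integral formulation of Definition~\ref{defi. solution}. I would work in the Banach space
\[
X := \Bigl\{ u : (0,T)\to \unifz{1}{\gamma}\cap\unifz{p}{\tilde{\gamma}} \ ;\ \|u\|_X<\infty \Bigr\},
\]
equipped with the norm $\|u\|_X$ equal to the left-hand side of \eqref{eq. regularity(i)}, and in the closed ball $X_R:=\{u\in X : \|u\|_X\le R\}$ with $R:=2C_0\|\varphi\|_{1,\gamma;T^{\alpha/2}}$, where $C_0$ is the linear constant below. The map to iterate is
\[
\Phi(u)(t) := P_\alpha(t)\varphi + \alpha\int_0^t (t-s)^{\alpha-1} S_\alpha(t-s)\bigl(|u|^{p-1}u\bigr)(s)\,ds.
\]

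Next I would apply the semigroup estimate (the statement of Theorem~\ref{theo. semigroup estimate} motivated in the introduction). Because \eqref{eq. defi of P and S} expresses $P_\alpha(t)$ and $S_\alpha(t)$ as weighted averages of heat semigroups $e^{t^\alpha\theta\Delta}$ against the Wright density $h_\alpha$, the required bounds in $\unifz{\cdot}{\cdot}$ follow from the Zygmund-type heat-semigroup estimates of \cite{IokuIshigeKawakami} by integrating against $h_\alpha$ and using its moment identity \eqref{eq. integral of halpha}. Schematically one obtains
\[
\|P_\alpha(t)\varphi\|_{1,\gamma;T^{\alpha/2}}\lesssim \|\varphi\|_{1,\gamma;T^{\alpha/2}},
\]
\[
\|P_\alpha(t)\varphi\|_{p,\tilde{\gamma};T^{\alpha/2}}\lesssim t^{-\frac{\alpha N}{2}\left(1-\frac{1}{p}\right)}\left(\LOG{t}\right)^{-\gamma+\frac{\tilde{\gamma}}{p}}\|\varphi\|_{1,\gamma;T^{\alpha/2}},
\]
and analogous estimates for $S_\alpha$ with an additional $\theta$-moment inside the $h_\alpha$-integral. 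Together these give $\|P_\alpha(\cdot)\varphi\|_X\le C_0\|\varphi\|_{1,\gamma;T^{\alpha/2}}$.

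Third, I would estimate the Duhamel term via the pointwise nonlinear bound $\bigl\||u|^{p-1}u\bigr\|_{1,p\tilde{\gamma};T^{\alpha/2}}\le \|u\|_{p,\tilde{\gamma};T^{\alpha/2}}^{p}$ and the definition of $X_R$. Since $p=p_F=1+2/N$ gives $N(p-1)/2=1$, the polynomial factors in $(t-s)$ and $s$ combine to a critical kernel of the form $(t-s)^{\alpha-1}(t-s)^{-\frac{\alpha N}{2}(1-\frac{1}{p})}s^{-\alpha}$ modulated by the log weights $(\LOG{t-s})^{-\gamma+\tilde{\gamma}/p}(\LOG{s})^{-p\gamma+\tilde{\gamma}}$. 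The $s$-integral is logarithmically borderline and is closed exactly by these weights, producing the prefactor $(\LOG{T})^{\gamma}$ that appears in \eqref{eq. main vanish}. The explicit dependence $\alpha^{N/2}(1-\alpha)^{N/2}$ emerges from evaluating the polynomial time integrals with $\alpha<1$ together with the $\theta$-moments \eqref{eq. integral of halpha}. Choosing $\|\varphi\|_{1,\gamma;T^{\alpha/2}}$ small enough that $CR^p\le R/2$ makes $\Phi$ a self-map of $X_R$, and the contraction property follows from $\bigl||u|^{p-1}u-|v|^{p-1}v\bigr|\lesssim(|u|+|v|)^{p-1}|u-v|$ and the same Duhamel estimate. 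Finally, \eqref{eq. initial valure (i)} is read off from the Duhamel part of $\Phi(u)$ as $t\searrow 0$.

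The main obstacle will be closing this critical time convolution: at $p=p_F$ the polynomial part of the integrand is precisely borderline, so the argument relies on a careful bookkeeping of how the two log-weights $(\LOG{t-s})^{-\gamma+\tilde{\gamma}/p}$ and $(\LOG{s})^{-p\gamma+\tilde{\gamma}}$ combine under the $s$-convolution to produce a \emph{finite} integral with exactly the prefactor $(\LOG{T})^{\gamma}$ required by \eqref{eq. main vanish}. A secondary difficulty, flagged by the author immediately before Theorem~\ref{theo. semigroup estimate}, is that the raw heat-semigroup estimates of \cite{IokuIshigeKawakami} cannot be invoked as a black box because of the time-nonlocality of $P_\alpha$ and $S_\alpha$; the linear estimates above must be derived through the $h_\alpha$-representation \eqref{eq. defi of P and S} and paid for by the $\theta$-moments, which is precisely the mechanism producing the $\alpha^{N/2}(1-\alpha)^{N/2}$ prefactor.
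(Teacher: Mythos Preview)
Your plan is essentially the paper's own proof: a fixed point for $\Phi$ in a weighted uniformly-local Zygmund ball, the linear estimate via the $h_\alpha$-representation combined with Theorem~\ref{theo. semigroup estimate}, and closure of the critical Duhamel convolution by log-weight bookkeeping. Two points, however, are misidentified and would derail the computation if followed literally.

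First, the nonlinear bound is $\bigl\||u|^{p}\bigr\|_{1,\tilde{\gamma};\rho}=\|u\|_{p,\tilde{\gamma};\rho}^{p}$ (Lemma~\ref{lem. power and norm}), not with log-index $p\tilde{\gamma}$; the exponent $\gamma$ in the Zygmund norm does not scale with the power. With index $p\tilde{\gamma}$ the subsequent semigroup step from $(1,p\tilde{\gamma})$ to $(p,\tilde{\gamma})$ would produce the wrong log factor and the integral would not close. Second, and this is the mechanism that distinguishes Theorem~\ref{theorem 1} from Theorem~\ref{theorem 2}, the factor $(1-\alpha)^{N/2}$ does \emph{not} come from the $\theta$-moments \eqref{eq. integral of halpha}: those $\Gamma$-ratios are bounded uniformly in $\alpha\in(0,1)$. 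It comes solely from the time integral $\int_0^{t/2}s^{-\alpha}\bigl(\LOG{s}\bigr)^{\tilde{\gamma}-\gamma p}\,ds$, which by Lemma~\ref{lem. log integral}\,(ii) carries a $(1-\alpha)^{-1}$. The self-map/contraction condition then reads $K^{p-1}\alpha^{-1}(1-\alpha)^{-1}\bigl(\LOG{T}\bigr)^{\gamma(1-p)}\ll 1$, and since $p-1=2/N$ this forces $K\sim\alpha^{N/2}(1-\alpha)^{N/2}\bigl(\LOG{T}\bigr)^{\gamma}$, whence \eqref{eq. main vanish}. (When $\gamma=N/2$ one has $\gamma p-\tilde{\gamma}>1$, Lemma~\ref{lem. log integral}\,(i) replaces (ii), and the $(1-\alpha)^{-1}$ disappears; that is the entire content of Theorem~\ref{theorem 2}.) As a minor tactical remark, the paper runs the fixed point only in the weighted $\|\cdot\|_{p,\tilde{\gamma}}$-norm and recovers the $\|\cdot\|_{1,\gamma}$ bound of \eqref{eq. regularity(i)} a posteriori in a separate step.
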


If we further restrict the singularity of the initial data so that the solvability of \eqref{eq. Fujita} is guaranteed, the $\alpha$-dependence of \eqref{eq. main vanish} is improved in the sense that the condition does not vanish as $\alpha \nearrow 1$. More precisely, we can deduce a sufficient condition that yields a reasonable meaning when $\alpha\nearrow 1$, as desired.

\begin{theo}\label{theorem 2}
	In addition to the same assumption as in Theorem~\ref{theorem 1}, assume further that $\gamma=N/2$. Then there exists $C=C(N)$ such that
	\begin{equation}\label{eq. main notvanish}
	\| \varphi\|_{1,\frac{N}{2}; T^{\frac{\alpha}{2}}} \le C \alpha^{\frac{N}{2}} \left(\log(e+2T) \right)^{-\frac{N}{2}}
	\end{equation}
	implies the existence of a solution of \eqref{eq. TFF} on $(0,T)$ which satisfies \eqref{eq. regularity(i)}. Moreover, there exists $C=C(N)$ such that
	\begin{equation}\label{eq. convergence gamma N/2}
	\| u(t)-P_{\alpha}(t)\varphi \|_{1,\gamma;T^{\alpha/2}} \le C \alpha^{\frac{N}{2}} (1-\alpha)^{-1} \left( \log\left( e+2T \right) \right)^{-\frac{N}{2}}\left( \LOG{t} \right)^{\gamma(1-p)}.
	\end{equation}
\end{theo}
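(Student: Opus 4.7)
The plan is to repeat the contraction mapping argument used to prove Theorem~\ref{theorem 1}, but to invoke a sharpened variant of the underlying semigroup estimate (Theorem~\ref{theo. semigroup estimate}) that is available specifically at the critical weight $\gamma = N/2$. Concretely, I would set up the Picard iteration
$$\Phi(u)(t) := P_\alpha(t)\varphi + \alpha\int_0^t (t-s)^{\alpha-1} S_\alpha(t-s) |u(s)|^{p-1}u(s)\,ds$$
on the closed ball of radius $R \sim \|\varphi\|_{1, N/2; T^{\alpha/2}}$ in the space whose norm is the left-hand side of \eqref{eq. regularity(i)} with $\gamma = N/2$, and verify that $\Phi$ is a self-map and a contraction on this ball. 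The resulting fixed point automatically satisfies \eqref{eq. regularity(i)}.

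The crux of the proof is the following asymmetry of the semigroup estimate at $\gamma = N/2$. The factor $(1-\alpha)^{N/2}$ appearing on the right-hand side of \eqref{eq. main vanish} traces back to Wright function moments of the type $\int_0^\infty \theta^{-N/2} h_\alpha(\theta)\,d\theta$, which diverge as $\alpha \nearrow 1$ according to \eqref{eq. integral of halpha} once the exponent reaches $-1$. When $\gamma = N/2$, the logarithmic weight $(\LOG{s})^{N/2}$ built into the norm $\|\cdot\|_{1, N/2; T^{\alpha/2}}$ can be used to absorb this divergence: by splitting the $\theta$-integration in \eqref{eq. defi of P and S} at a threshold $\theta_\ast$ chosen in terms of $T$, performing a logarithmic interpolation on the small-$\theta$ piece (where the Gaussian $G_{t^\alpha \theta}$ concentrates so that the $(\LOG{\cdot})^{N/2}$ weight is active), and using the rapid decay of $h_\alpha$ for $\theta > \theta_\ast$, one should be able to trade $(1-\alpha)^{N/2}$ for $(\log(e+2T))^{-N/2}$ at the expense of an $\alpha^{-N/2}$ on the constant side.

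Equipped with this refined linear estimate, the bilinear analysis of the Duhamel term proceeds essentially as in Theorem~\ref{theorem 1}: since the nonlinearity is homogeneous and the scaling $p = p_F$ makes the time-integrated estimate critical in the same way, the smallness condition on $\|\varphi\|_{1, N/2; T^{\alpha/2}}$ inherits precisely the form \eqref{eq. main notvanish}. For the convergence estimate \eqref{eq. convergence gamma N/2}, apply $\|\cdot\|_{1, N/2; T^{\alpha/2}}$ to the identity $u(t) - P_\alpha(t)\varphi = \alpha\int_0^t (t-s)^{\alpha-1} S_\alpha(t-s) |u(s)|^{p-1}u(s)\,ds$, substitute the bound on $u$ coming from \eqref{eq. regularity(i)}, and carry out the time integral; the residual Wright function moment that must be estimated at this last step does not benefit from the logarithmic trick and diverges like $1/(1-\alpha)$, which accounts for the $(1-\alpha)^{-1}$ factor visible on the right-hand side of \eqref{eq. convergence gamma N/2}.

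The main obstacle will be justifying the refined semigroup estimate at $\gamma = N/2$. It requires a careful splitting in the subordination variable $\theta$ that is tied simultaneously to $T$ and to the concentration of $h_\alpha$ near $\theta = 1$ as $\alpha \nearrow 1$, together with a precise propagation of the $(\LOG{\cdot})^{N/2}$ weight through the heat semigroup $e^{t^\alpha\theta\Delta}$. Producing the asymmetric logarithmic factor $(\log(e+2T))^{-N/2}$, as opposed to the $(\LOG{T})^{N/2}$ factor that appears for general $\gamma$ in Theorem~\ref{theorem 1}, is the delicate book-keeping point.
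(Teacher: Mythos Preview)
Your proposal misidentifies where the factor $(1-\alpha)^{N/2}$ in \eqref{eq. main vanish} originates, and as a result the suggested fix attacks the wrong part of the argument.

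In the proof of Theorem~\ref{theorem 1}, the Wright-function integrals in $\theta$ are \emph{not} the source of the loss. The moments that actually occur are of the type $\int_0^\infty h_\alpha(\theta)\,\theta^{-\frac{N}{2}(1-\frac{1}{p})}\,d\theta$ with $p=p_F$, so the exponent is $-N/(N+2)>-1$ and \eqref{eq. integral of halpha} gives a bound uniform in $\alpha\in(0,1)$; no $\theta^{-N/2}$ moment appears. The $(1-\alpha)^{-1}$ enters only when the \emph{time} integral
\[
\int_0^{t/2} s^{-\alpha}\left(\LOG{s}\right)^{\tilde{\gamma}-\gamma p}\,ds
\]
is estimated via Lemma~\ref{lem. log integral}\,(ii), since the exponent $q=\gamma p-\tilde{\gamma}$ need only be nonnegative for general $\gamma\ge 0$. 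Your proposed refinement of the semigroup estimate by splitting the subordination variable $\theta$ therefore cannot remove the $(1-\alpha)$-dependence, because it was never produced there.

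The paper's actual mechanism is elementary and takes place entirely in the time variable. When $\gamma=N/2$ one has
\[
\gamma p-\tilde{\gamma}=\frac{N}{2}\Bigl(1+\frac{2}{N}\Bigr)-\tilde{\gamma}=\frac{N}{2}+1-\tilde{\gamma}>1
\]
since $\tilde{\gamma}<N/2$, and this is exactly the threshold at which Lemma~\ref{lem. log integral}\,(i) becomes available. Replacing (ii) by (i) trades $(1-\alpha)^{-1}$ for the harmless factor $\log(e+2t)\le\log(e+2T)$, which then propagates through the fixed-point scheme to give \eqref{eq. main notvanish} with $K\sim\alpha^{N/2}(\log(e+2T))^{-N/2}$. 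No new semigroup estimate is needed; the same Theorem~\ref{theo. semigroup estimate} is used verbatim. For \eqref{eq. convergence gamma N/2}, one simply reverts to the general bound \eqref{eq. step3 convergence} (which carries a $(1-\alpha)^{-1}$) and inserts the new choice of $K$.
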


Obviously, Theorem~\ref{theorem 1} is a generalization of \eqref{eq. Hisa-Kojima sufficient}, since \eqref{eq. main vanish} with $\gamma=0$ coincides with \eqref{eq. Hisa-Kojima sufficient}. On the other hand, the quantity in the right hand side of \eqref{eq. main notvanish} does not vanish as $\alpha\nearrow 1$. Note that
\[
\unifz{q}{\gamma_1}\subset \unifz{q}{\gamma_2}
\]
if $\gamma_1 >\gamma_2$. Therefore, the singularity $\gamma=N/2$ is regarded as the threshold of "non-vanishment" and "vanishment" as $\alpha \nearrow 1$. 

This significant difference of the $\alpha$-dependence inherits the treatment of the nonlinear term in the proofs of Theorems~\ref{theorem 1} and \ref{theorem 2}. For general $\varphi \in \unifz{1}{\gamma}$ and $\gamma \ge 0$, we essentially utilize the fact that $\alpha<1$, so arguments fail when $\alpha =1$. Therefore, the statement in Theorem~\ref{theorem 1} does not yield reasonable meaning when $\alpha \nearrow 1$. On the other hand, if we restrict the singularity of $\varphi$, we do not need to rely on the condition $\alpha<1$. Thus, the statement in Theorem~\ref{theorem 2} does not vanish even when $\alpha\nearrow 1$. More precisely, the key ingredient is Lemma~\ref{lem. log integral}. Although it comes from just simple calculations of integral, we are able to treat the nonlinear term in essentially different ways in the proofs of Theorems~\ref{theorem 1} and \ref{theorem 2}.

We compare Theorems~\ref{theorem 1} and \ref{theorem 2} with \cite[theorem~1.1]{IokuIshigeKawakami}, which states that the local-in-time solvability of \eqref{eq. Fujita} in the uniformly local weak Zygmund space is characterized as follows: Let $p=p_F$ and $T_*\in (0,\infty)$ be fixed. Then, there exists $C=C(T_*)>0$ such that, if $\varphi \in \unifz{1}{N/2}$ satisfies
	\begin{equation}\label{eq. IokuIshigeKawakami}
	\|\varphi \|_{1,\frac{N}{2};T^{\frac{1}{2}}}\le C
	\end{equation}
	for some $T\in (0,T_*]$, then \eqref{eq. Fujita} possesses a solution $u$ on $(0,T)$ which satisfies for $\tilde{\gamma}\in [0, N/2)$,
	\begin{equation}\label{eq. regularity (F)}
	\sup_{0<t<T} \|u(t)\|_{1,\frac{N}{2};T^{\frac{1}{2}}}+ \sup_{0<t<T} t^{\frac{ N}{2}\left(1-\frac{1}{p}\right) }\left( \LOG{t} \right)^{\frac{N}{2} -\frac{\tilde{\gamma}}{p}} \| u(t)\|_{p,\tilde{\gamma};T^{\frac{1}{2}}}\le C\| \varphi\|_{1,\frac{N}{2};T^{\frac{1}{2}}}
	\end{equation}
	where $C=C(N, \tilde{\gamma}, T^*)$. In particular, for any $\varphi\in \unifz{1}{N/2}$, there exists a local-in-time solution of \eqref{eq. Fujita}. Moreover, it follows that
	\begin{equation}\label{eq. initial valure (F)}
	\lim_{t\searrow 0} \| u(t)- e^{t\Delta} \varphi\|_{1,\tilde{\gamma};T^{\frac{1}{2}}} = 0.
	\end{equation}

Although \eqref{eq. Hisa-Kojima sufficient} vanishes as $\alpha\nearrow1$, the right hand side of condition \eqref{eq. main notvanish} does not vanish as $\alpha\nearrow 1$ as desired. Indeed, all initial data in $\unifz{1}{N/2}$ must satisfy the local-in-time solvability of the classical Fujita problem \eqref{eq. Fujita} thanks to \eqref{eq. IokuIshigeKawakami}. Therefore, Theorem~\ref{theorem 2} is regarded as the corresponding result to \cite[Theorem~1.1]{IokuIshigeKawakami}. On the other hand, the right hand side of condition \eqref{eq. main vanish} vanishes as $\alpha \nearrow 1$, and this fact reflects the collapse of the local-in-time solvability of \eqref{eq. TFF} for certain initial data which belongs to $\unifz{1}{\gamma}$ for $\gamma<N/2$. This point shall be discussed in detail in Section~\ref{section. Life span}, through life span estimates for typical initial data.

\begin{table}[h]
	\caption{Solvability for $\varphi \in \unifz{1}{\gamma}$.}
	\centering
	\begin{tabular}{ccc}
		\hline
		Problem & $\gamma\ge N/2$ & $\gamma<N/2$ \rule[0mm]{0mm}{5mm}\\
		\hline \hline
		\eqref{eq. Fujita} with $p=p_F$ & 
		Solvable & Unsolvable for some $\varphi\ge 0$.
		\rule[0mm]{0mm}{5mm}\\
		\eqref{eq. TFF} with $p=p_F$ & Solvable & Solvable, but \eqref{eq. main vanish} vanishes as $\alpha \nearrow 1$.
		\rule[0mm]{0mm}{5mm}\\  
		\hline
	\end{tabular}
\end{table}

We also mention the continuity at the initial time $t=0$. For \eqref{eq. Fujita}, the continuity at $t=0$ \eqref{eq. initial valure (F)} was proved in the norm of  $\unifz{1}{\tilde{\gamma}}$ for $\tilde{\gamma}<N/2$, which is weaker than that of the class $\unifz{1}{N/2}$ of initial data. However, for the time-fractional problem \eqref{eq. TFF}, the continuity at $t=0$ \eqref{eq. initial valure (i)} is guaranteed in the norm of $\unifz{1}{\gamma}$ which is the space $\varphi$ belongs to. This is a specific property for the time-fractional problem \eqref{eq. TFF}. As a matter of fact, this difference is suggested by \eqref{eq. convergence gamma N/2}, since the right hand side of \eqref{eq. convergence gamma N/2} goes to infinity as $\alpha\nearrow 1$.

	In Section~\ref{section. Life span}, we apply Theorems~\ref{theorem 1} and \ref{theorem 2} to life span estimates for the initial data 
	\begin{equation}
	f_{\beta}(x):= |x|^{-N} \left( \LOG{|x|} \right)^{-\frac{N}{2}-1-\beta}\quad \text{for}\quad -\frac{N}{2}<\beta<\infty.
	\end{equation}
	Note that $f_{\beta}\in \unifz{1}{\frac{N}{2}+\beta}$ (see Proposition~\ref{prop. fbeta}). Thus, if $\beta>0$, then \eqref{eq. Fujita} with $\varphi=f_{\beta}$ is locally-in-time solvable, whereas if $-N/2<\beta<0$, it is known that the unsolvability of \eqref{eq. Fujita} is deduced by the necessary condition \eqref{eq. Hisa-Ishige} (see \cite{BarasPierre85, HisaIshige18, Miya21}). Therefore, observations in Section~\ref{section. Life span} shall be divided into the following three cases:
	\begin{itemize}
		\item $\beta>0$ (Solvable even when $\alpha \nearrow 1$),
		
		\item $\beta=0$ (Threshold),
		
		\item $-\frac{N}{2}<\beta<0$ (Unsolvable when $\alpha\nearrow 1$).
	\end{itemize}

	Through life span estimates, we shall observe that the solvability of \eqref{eq. TFF} connects to the solvability of \eqref{eq. Fujita}, when $\varphi=f_{\beta}$ and $\beta>0$. In contrast, the solvability of \eqref{eq. TFF} connects to the unsolvability of \eqref{eq. Fujita} when $\beta<0$. See Theorems~\ref{theo. beta positive: kappa large}, \ref{theo. beta positive: kappa small} and Theorem~\ref{theo. beta negative} respectively. Furthermore, in the threshold case $\beta=0$, we also obtain a natural connection between \eqref{eq. TFF} and \eqref{eq. Fujita}. See Theorem~\ref{theo. beta zero}.

	We further apply Theorems~\ref{theorem 1} and \ref{theorem 2} to the life span estimate for certain initial data which is investigated by Lee, Ni \cite{LeeNi92}. We observe that the specific estimate holds to the time-fractional problem \eqref{eq. TFF}, and it connects to the classical one for \eqref{eq. Fujita}.

	The rest of the paper is organized as follows. In Section~\ref{section. Semigroup estimate}, we deduce a semigroup estimate in the uniformly local weak Zygmund space. The corresponding estimate is already studied in \cite[Proposition~3.2]{IokuIshigeKawakami}, but this argument cannot be directly applied to the problem \eqref{eq. TFF}, because of the restriction on the range of time. We shall give a detailed description before Theorem~\ref{theo. semigroup estimate}. We modify the argument in the proofs of \cite[Proposition~2.1]{ARCD2004} and \cite[Theorem~3.1]{MaeTera06}, which treat a semigroup estimate in the uniformly local Lebesgue space framework. In Section~\ref{section. proof of Theorem}, we prove Theorems~\ref{theorem 1} and \ref{theorem 2} by using the fixed point theorem. In Section~\ref{section. Life span}, we apply Theorems~\ref{theorem 1} and \ref{theorem 2} to life span estimates for certain singular initial data, which are typical elements in the uniformly local weak Zygmund spaces. Through these life span estimates, we can understand the correspondence between the local-in-time solvability of \eqref{eq. TFF} and \eqref{eq. Fujita}. Moreover, we introduce a Lee-Ni type estimate which corresponds to the argument in \cite{LeeNi92}.

\section{Semigroup estimate}\label{section. Semigroup estimate}
This section is devoted to the derivation of a semigroup estimate required for the time-fractional problem \eqref{eq. TFF}.

\subsection{Properties of the weak Zygmund space}

For detailed properties of the weak Zygmund type space, see \cite{IokuIshigeKawakami} and references therein. For a Lebesgue measurable function $f$ in $\mathbb{R}^N$, we denote by $\mu_f$ the distribution of $f$,
\[
\mu_f(\lambda):= \left| \left\{ x\in \mathbb{R}^N; |f(x)|>\lambda \right\} \right|\ \ \mbox{for}\ \ \lambda>0.
\]
The non-increasing rearrangement $f^*$ of $f$ is defined by
\[
f^*(s):=\inf\left\{ \lambda>0; \mu_f(\lambda)\le s \right\}\ \ \mbox{for}\ \ s\ge0.
\]
As for the rearrangement, we have the following properties: for $q\ge 1$ and $k\in \mathbb{R}^N$,
\[
\begin{aligned}
\left( kf \right)^{*}&=|k|f^{*},\\
\left( |f|^q \right)^{*}&=\left( f^{*} \right)^q,\\
\int_{\mathbb{R}^N}|f(x)|^{q}\,dx&=\int_{0}^{\infty}f^{*}(s)^{q}ds,\\ f^*(0)&=\|f\|_{L^{\infty}(\mathbb{R}^N)}.
\end{aligned}
\]
Let
\[
f^{**}(s):= \frac{1}{s}\int_{0}^{s} f^{*}(\tau)d\tau.
\]
It is known by \cite[Chapter~2, Proposition~3.3]{BennettSharpley} that,
\[
f^{**}(s)=\frac{1}{s}\sup_{|E|=s}\int_{E}|f(x)|\,dx.
\]
Therefore, by the definition \eqref{eq. definition of Zygmund norm} and the above arguments, we obtain
\begin{equation}\label{eq. Zygmund norm rearrangement form}
\begin{aligned}
\|f\|_{\zygm{q}{\gamma}}&:= \sup_{s>0} \left[ \left( \LOG{s} \right)^{\gamma} \sup_{|E|=s} \int_{E} |f(x)|^q\,dx \right]^{\frac{1}{q}}\\
&= \sup_{s>0} \left[ \left( \LOG{s} \right)^{\gamma} s\left( |f|^q \right)^{**}(s) \right]^{\frac{1}{q}}\\
&=\sup_{s>0} \left[ \left( \LOG{s} \right)^{\gamma} \int_{0}^{s} \left( |f|^{q} \right)^{*}(\tau)d\tau \right]^{\frac{1}{q}}\\
&=\sup_{s>0} \left[ \left( \LOG{s} \right)^{\gamma} \int_{0}^{s} f^{*}(\tau)^qd\tau \right]^{\frac{1}{q}}.
\end{aligned}
\end{equation}
It is known by \cite[Theorem~3.3]{Oneil63} that for $f,\ g\in L^{1}_{\rm loc}$,
\begin{equation}
\left( fg \right)^{**}(s)\le \frac{1}{s}\int_{0}^{s} f^{*}(\tau) g^{*}(\tau) d\tau
\end{equation}
for all $s>0$. Note that $\left( \chi_{E} \right)^{*}(s)= \chi_{[0,|E|]}(s)$ for measurable set $E$ such that $|E|<\infty$. Hence, it follows that
\begin{equation}
\left( f \chi_{E} \right)^{**}\le \frac{1}{s}\int_{0}^{s} f^{*}(\tau) \left( \chi_{E} \right)^{*}(\tau)d\tau = \frac{1}{s} \int_{0}^{\min\left( s, |E| \right)} f^{*}(\tau)d\tau.
\end{equation}
Therefore, combining with the monotonicity of
\[
s\mapsto \LOG{s},
\]
we obtain
\begin{equation}\label{eq. zygmund norm for cut off}
\| f\chi_{E}\|_{\zygm{q}{\gamma}} \le \sup_{0<s<|E|} \left[ \left( \LOG{s} \right)^{\gamma} \int_{0}^{s} f^{*}(\tau)^{q}d\tau \right]^{\frac{1}{q}}.
\end{equation}
Moreover, one has (see \cite[page~9]{IokuIshigeKawakami})
\begin{equation}\label{eq. zygmund norm for cut off2}
\| f \chi_{E}\|_{\zygm{q}{\gamma}} \le \sup_{0<s<|E|}\left[ \left( \LOG{s} \right)^{\gamma}  \int_{0}^{s} \left( f\chi_{E} \right)^{*}(\tau)^q d\tau \right]^{\frac{1}{q}}.
\end{equation}

\subsection{Semigroup estimate}

In what follows, for a set $X$ and maps $a, b :X\to [0,\infty)$, we denote
\[
a(x)\lesssim b(x)
\]
if there exists a positive constant $C>0$ independent of $x$ such that $a(x)\le Cb(x)$ for all $x\in X$. Moreover, we denote
\[
a(x)\simeq b(x)
\]
if there exists $C>0$ such that $C^{-1}b(x)\le a(x)\le Cb(x)$ for all $x\in X$.

We introduce the following important estimates for the proof of the semigroup estimate.  First, recall the following relations on logarithmic functions. For $L>1$ and $k>0$, it follows that
\begin{equation}\label{eq. equivalence of LOG}
\LOG{s}\simeq \log\left( L + \frac{1}{s} \right)\simeq \log\left( e+ \frac{k}{s} \right) \simeq \LOG{s^k},\ \ s>0.
\end{equation}

Moreover, for the estimate of the heat kernel $G_t$, the following is useful:
\begin{equation}\label{eq. estimate of heat kernel by ht}
G_t(x)\lesssim h_t(|x|),
\end{equation}
where
\begin{equation}
h_{t}(r):=t^{-\frac{N}{2}}\left( 1+t^{-\frac{1}{2}}r \right)^{-N-2}.
\end{equation}

Ioku, Ishige, and Kawakami \cite[Proposition~3.2]{IokuIshigeKawakami} constructed the following semigroup estimate in the uniformly local weak Zygmund space:
\begin{equation}\label{eq. semigroup estimate in ul}
\| e^{t\Delta} \varphi\|_{r,\gamma_1;\rho}\le C t^{-\frac{N}{2}\left( \frac{1}{q}-\frac{1}{r} \right)} \left( \LOG{t} \right)^{-\frac{\gamma_2}{q}+\frac{\gamma_1}{r}} \|\varphi\|_{q,\gamma_2;\rho},
\end{equation}
under the restriction of $t\le \rho^{2}$. However, this argument cannot be directly applied to \eqref{eq. TFF}, since we have to use the semigroup estimate within the whole range $t\in (0,\infty)$, even if we just consider the local-in-time solvability, due to the nonlocality with respect to time. Indeed, by Definition~\ref{defi. solution}, we shall estimate
\begin{equation}
\int_{0}^{\infty} h_{\alpha}(\theta) e^{t^{\alpha}\theta \Delta } \varphi\,d\theta\quad\text{and}\quad\int_{0}^{\infty}\theta h_{\alpha}(\theta) e^{(t-s)^{\alpha}\theta\Delta} |u|^{p-1}u(s)\,d\theta ds
\end{equation}
in the proofs of Theorems~\ref{theorem 1} and \ref{theorem 2}. Although the range of time is restricted $0<t<T$, the integral with respect to $\theta$ spans $(0,\infty)$. Therefore, even if we consider the local-in-time solvability of \eqref{eq. TFF}, we need the semigroup estimate which controls any $t\in (0,\infty)$.

\begin{theo}\label{theo. semigroup estimate}
	Suppose that $1\le q\le r\le \infty$ and $\gamma_1 ,\gamma_2 \ge 0$. Assume further that $\gamma_1 \ge \gamma_2$ if $r=q$. Then, there exists $C=C(r,q,\gamma_1,\gamma_2, N)>0$ such that
	\begin{equation}
	\begin{aligned}
	\| e^{t\Delta}\varphi\|_{r,\gamma_1;\rho}\le C&\left[ \rho^{-N\left( \frac{1}{q}-\frac{1}{r} \right) }\left( \LOG{\rho} \right)^{-\frac{\gamma_2}{q}+\frac{\gamma_1}{r}}\right.\\
	&\left.\hspace{2cm}+ t^{-\frac{N}{2}\left(\frac{1}{q}-\frac{1}{r}\right)} \left( \LOG{t} \right)^{-\frac{\gamma_2}{q}+\frac{\gamma_1}{r}} \right]\| \varphi\|_{q,\gamma_2;\rho}
	\end{aligned}
	\end{equation}
	for $\varphi\in \unifz{q}{\gamma_2}$ and for all $t>0$ and $\rho>0$.
\end{theo}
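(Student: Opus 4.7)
The plan is to split the argument into the two regimes $0<t\le\rho^{2}$ and $t>\rho^{2}$, mirroring the two summands on the right-hand side of the claimed bound. In the first regime the existing semigroup estimate \eqref{eq. semigroup estimate in ul} of Ioku, Ishige, and Kawakami applies verbatim and already yields the summand involving $t$, so the substance of the theorem lies in the second regime $t>\rho^{2}$, where the heat kernel has diffused beyond scale $\rho$ and \eqref{eq. semigroup estimate in ul} is no longer applicable. In the spirit of \cite{ARCD2004} and \cite{MaeTera06}, this second regime will be handled by a pointwise argument that exploits the slow variation of the heat kernel on scales below $\sqrt{t}$.

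In the regime $t>\rho^{2}$ I would first prove a pointwise $L^\infty$ bound on $(e^{t\Delta}\varphi)(x)$ and then convert it to a uniformly local weak Zygmund estimate. Fix a lattice covering $\{B(z_{k};\rho)\}_{k\in\mathbb{Z}^{N}}$ of $\mathbb{R}^{N}$ with bounded overlap and decompose $\varphi=\sum_{k}\varphi\chi_{B(z_{k};\rho)}$. The kernel bound \eqref{eq. estimate of heat kernel by ht}, together with the observation that $h_{t}(|x-y|)\lesssim h_{t}(|x-z_{k}|)$ for $y\in B(z_{k};\rho)$ and $\rho\le\sqrt{t}$ (a shift by $\rho$ on the length-scale $\sqrt{t}$ changes $(1+r/\sqrt{t})^{-N-2}$ only by a bounded factor), gives
\[
|(e^{t\Delta}\varphi)(x)|\lesssim \sum_{k} h_{t}(|x-z_{k}|)\int_{B(z_{k};\rho)}|\varphi(y)|\,dy.
\]
H\"older's inequality combined with \eqref{eq. zygmund norm for cut off} and \eqref{eq. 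Zygmund norm rearrangement form} controls each local integral by $\rho^{N(1-1/q)}(\LOG{\rho})^{-\gamma_{2}/q}\|\varphi\|_{q,\gamma_{2};\rho}$ uniformly in $k$. Combining with the Riemann-sum bound $\sum_{k}h_{t}(|x-z_{k}|)\lesssim \rho^{-N}\|h_{t}\|_{L^{1}(\mathbb{R}^{N})}\lesssim \rho^{-N}$ (valid since the balls have characteristic scale $\rho\le\sqrt{t}$) yields
\[
\|e^{t\Delta}\varphi\|_{L^{\infty}(\mathbb{R}^{N})}\lesssim \rho^{-N/q}(\LOG{\rho})^{-\gamma_{2}/q}\|\varphi\|_{q,\gamma_{2};\rho}.
\]
Multiplying by the elementary bound $\|\chi_{B(z;\rho)}\|_{\zygm{r}{\gamma_{1}}}\lesssim \rho^{N/r}(\LOG{\rho})^{\gamma_{1}/r}$, which follows at once from \eqref{eq. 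Zygmund norm rearrangement form}, and taking the supremum in $z$ produces the first summand of the claimed inequality.

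The main obstacle I anticipate is arranging the two ingredients of the $t>\rho^{2}$ case so that they cooperate: the near-constancy of $h_{t}$ on each $B(z_{k};\rho)$, and the global summation $\sum_{k}h_{t}(|x-z_{k}|)\lesssim \rho^{-N}$. Both rely on the same hypothesis $\rho\le\sqrt{t}$, which is precisely why the dichotomy $t\lessgtr \rho^{2}$ is the natural splitting. The logarithmic factors at various length-scales are reconciled via the equivalences \eqref{eq. equivalence of LOG}, and the borderline hypothesis $\gamma_{1}\ge \gamma_{2}$ in the case $r=q$ is used only to ensure that while the polynomial prefactor $\rho^{-N(1/q-1/r)}$ degenerates to $1$, the logarithmic exponent $-\gamma_{2}/q+\gamma_{1}/r$ remains nonnegative so that the estimate does not collapse.
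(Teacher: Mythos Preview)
Your proof is correct and takes a genuinely simpler route than the paper's. Both proofs invoke the existing estimate \eqref{eq. semigroup estimate in ul} for $t\le\rho^{2}$; the difference lies in the regime $t>\rho^{2}$. The paper performs a double cubic decomposition, localising both $G_{t}$ and $\varphi$ onto cubes $S_{k}(\rho/2)$, then splits into a near part (handled by the global weak Zygmund estimate of \cite{IokuIshigeKawakami}, yielding the $t$-term) and a far part; the far part requires H\"older's inequality in the Zygmund scale followed by a delicate Riemann-sum estimate of the shifted quantity $\sum_{k'}h_{t}\bigl(|\zeta_{k'}|-\tfrac{\sqrt{N}}{2}\rho\bigr)$, which in turn forces a dimension-dependent case analysis ($N\le 4$ versus $N>4$). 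Your argument decomposes only $\varphi$, dispenses with the near/far split, and replaces the shifted-kernel sum by the single observation $h_{t}(|x-y|)\simeq h_{t}(|x-z_{k}|)$ on each $B(z_{k};\rho)$ whenever $\rho\le\sqrt{t}$; the resulting unshifted Riemann sum $\sum_{k}h_{t}(|x-z_{k}|)\lesssim\rho^{-N}$ is immediate and dimension-free. This buys you a shorter proof with no case split, at the cost of bounding \emph{all} of $e^{t\Delta}\varphi$ by the $\rho$-term when $t>\rho^{2}$, whereas the paper bounds the near part by the sharper $t$-term even in that range. Since the theorem's right-hand side is the sum of the two terms, that extra precision is not exploited and nothing is lost.
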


We shall sketch the outline of the proof of Theorem~\ref{theo. semigroup estimate}. In the uniformly local Lebesgue space framework, \cite{ARCD2004, MaeTera06} obtained the semigroup estimate within the whole range $t\in (0,\infty)$. In the following proof of Theorem~\ref{theo. semigroup estimate}, a main idea is similar to that of \cite{ARCD2004,MaeTera06}, the decomposition of $G_t$ and $\varphi$:
\[
e^{t\Delta}\varphi = \sum_{k,k'\in \mathbb{Z}^N} \left( G_t\chi_{S_{k'}(\rho/2)} \right)\ast \left( \varphi \chi_{S_{k}(\rho/2)} \right)
\]
where $S_k(\theta)$ denotes the cube whose center is $\rho k$ for $k\in \mathbb{Z}^N$, sides are of length $\theta$ and parallel to the axis, i.e.,
\[
S_k(\theta):=\left\{ y; \max_{1\le i\le N}|y_i-(\zeta_k)_i|\le \theta \right\},\ \ \zeta_k:=\rho k\quad \text{for}\quad k\in \mathbb{Z}^N.
\]
For $k\in \mathbb{Z}^N$ near the origin, the desired estimate \eqref{eq. semigroup estimate in ul} is obtained by \cite[page 20, (3.19)]{IokuIshigeKawakami}. Thus, it remains to prove the conclusion for $k\in  \mathbb{Z}^N$ far from the origin. By the translation technique, we only have to estimate $\| e^{t\Delta}\varphi\chi_{B(0;\rho)}\|_{\zygm{r}{\gamma_1}}$.
In order to evaluate $\| e^{t\Delta}\varphi\chi_{B(0;\rho)}\|_{\zygm{r}{\gamma_1}}$ by $\| \varphi\|_{q,\gamma_2;\rho}$ from above, we shall make use of the fact that the summation of $\sum_{k}$ can be replaced with the finite one (see \eqref{eq. kkprime finite}), thanks to the decomposition of $G_t$ and $\varphi$.
In this step, although we want to use Young's inequality for the estimate of the convolution $\left( G_t\chi_{S_{k'}(\rho/2)} \right)\ast \left( \varphi \chi_{S_{k}(\rho/2)} \right)$, we unfortunately do not have Young's type inequality associated with the uniformly local weak Zygmund type norm. Therefore, we have to use H\"{o}lder's inequality at the expense of the optimal order.

Finally, it remains to estimate the summation of $\sum_{k'}$ which corresponds to estimates for the heat kernel. In this step, by calculating the integral of the heat kernel within the range excluding the neighborhood of the origin, we compensate for the optimal order. We also explain about this point in Remark~\ref{rem. proof of semigroup} after the proof.

\begin{proof}[Proof of Theorem~\ref{theo. semigroup estimate}]
	By \eqref{eq. semigroup estimate in ul}, the desired estimate is already obtained for $t\le \rho^{2}$. Therefore, it suffices to prove the conclusion provided $t>\rho^2$. Furthermore, by the translation technique, we only have to estimate $\| e^{t\Delta}\varphi\chi_{B(0;\rho)}\|_{\zygm{r}{\gamma_1}}$. As we mentioned above, the cubic decomposition of $\mathbb{R}^N$ plays an essential role in the proof. Let
	\[
	\mathbb{R}^N= \bigcup_{k\in \mathbb{Z}^{N}} S_k(\rho/2)
	\]
	and $u_{k}(t,x):= e^{t\Delta}\left( \varphi \chi_{S_{k}(\rho/2)} \right)(x)$. Then
	\[
	\left| e^{t\Delta}\varphi \right|\le \left| u_0(t,x) \right| + 
	\sum_{\substack{k\in \mathbb{Z}^N, \\ \max_{1\le i\le N}|k_i|> d }} \left| u_{k}(t,x) \right|
	\]
	where $d=d(N)>0$ shall be determined later and
	\[
	u_0(t,x):= \sum_{\substack{m\in \mathbb{Z}^N, \\ \max_{1\le i\le N}|m_i|\le d }} u_m(t,x).
	\]
	\noindent\underline{\textbf{Step 1.}} Estimate for $u_0$ is obtained by the same argument as \cite[page 20, (3.19)]{IokuIshigeKawakami}. Indeed, let $l>0$ be such that
	\[
	\bigcup_{\max_{1\le i\le N}|m_i|\le d } S_{m}(\rho/2) \subset B(0;l \rho).
	\]
	Then, by \cite[Proposition~3.1]{IokuIshigeKawakami},
	\begin{equation}
	\begin{aligned}
	&\| u_0(t)\chi_{B(0;\rho)}\|_{\zygm{r}{\gamma_1}}\\
	&\le \| u_0(t)\|_{\zygm{r}{\gamma_1}}\\
	&\le t^{-\frac{N}{2}\left( \frac{1}{q}-\frac{1}{r} \right)} \left( \LOG{t} \right)^{-\frac{\gamma_2}{q}+\frac{\gamma_1}{r}} \| \varphi \chi_{B(0;l\rho)} \|_{\zygm{q}{\gamma_2}}\\
	&\lesssim t^{-\frac{N}{2}\left( \frac{1}{q}-\frac{1}{r} \right)} \left( \LOG{t} \right)^{-\frac{\gamma_2}{q}+\frac{\gamma_1}{r}} \| \varphi \chi_{B(0;\rho)} \|_{\zygm{q}{\gamma_2}}\\
	&\lesssim t^{-\frac{N}{2}\left( \frac{1}{q}-\frac{1}{r} \right)} \left( \LOG{t} \right)^{-\frac{\gamma_2}{q}+\frac{\gamma_1}{r}} \| \varphi \|_{q,\gamma_2;\rho}.
	\end{aligned}
	\end{equation}
	
	\noindent\underline{\textbf{Step 2.}} From now on, we omit
	\[
	\sum_{k}=\sum_{\substack{k\in \mathbb{Z}^N, \\ \max_{1\le i\le N}|k_i|> d }}
	\]
	for simplicity, since we shall estimate the sum of $u_k$ for $\max_{1\le i\le N} |k_i|>d$. More precisely, we shall deduce that for all $t>0$ and $x\in S_0(\rho/2)$, one has
	\begin{equation}\label{eq. estimat for sum of uk}
		\sum_{k} |u_k(t,x)|\lesssim \left( \LOG{\rho} \right)^{-\frac{\gamma_2}{q}} \rho^{\frac{N}{q^*}} \sum_{|k'|>2\sqrt{N}} h_{t}\left( |\zeta_{k'}|-\frac{\sqrt{N}}{2}\rho \right)\ \|\varphi\|_{q,\gamma_2;\rho}.
	\end{equation}
	It follows by \eqref{eq. estimate of heat kernel by ht} that,
	\[
	\begin{aligned}
	\sum_{k}\left| u_k(t,x) \right|&\lesssim \sum_{k}\int_{\mathbb{R}^N} h_t(x-y)|\varphi(y)\chi_{S_{k}(\rho/2)}(y)|\,dy\\
	&= \sum_{k}\sum_{k'\in\mathbb{Z}^N} \int_{\mathbb{R}^N} h_t(x-y) \chi_{S_{k'}(\rho/2)}(x-y)|\varphi(y)\chi_{S_{k}(\rho/2)}(y)|\,dy.
	\end{aligned}
	\]
	Note that,
	\[
	{\rm supp}\left( h_t \chi_{S_{k'}(\rho/2)} \right)\ast \left( \varphi\chi_{S_k(\rho/2)} \right)\subset S_{k+k'}(\rho),
	\]
	and there exists $M>0$ depending only on $N$ such that
	\begin{equation}\label{eq. kkprime finite}
	S_{k+k'}(\rho)\cap S_0(\rho/2)\neq \emptyset\ \Rightarrow\ |k+k'|\le M.
	\end{equation}
	Indeed, if there exists $y\in S_{k+k'}(\rho)\cap S_{0}(\rho/2)$, then, it holds that
	\[
	\left|y_i\right|\le \frac{\rho}{2}\quad  \text{and}\quad \left| y_i-(k+k')_i\rho \right|\le \rho
	\]
	for all $i=1,2,\cdots,N$. Then, we deduce that $\max_{1\le i\le N}\left| (k+k')_i \right|\le 3/2$.
	
	Hence, we obtain that for all $x\in S_0(\rho/2)$,
	\begin{equation}\label{eq. proof of semigroup 1}
	\sum_{k}\left| u_k(t,x) \right|\lesssim \sum_{|k+k'|\le M} \int_{\mathbb{R}^N} h_t(x-y) \chi_{S_{k'}(\rho/2)}(x-y)|\varphi(y)\chi_{S_{k}(\rho/2)}(y)|\,dy.
	\end{equation}
	Here, we determine $d:= 2\sqrt{N}+M$ so that
	\begin{equation}\label{eq. range of kprime}
	|k'|\ge |k|-M>2\sqrt{N}.
	\end{equation}
	From now on, we omit $S_k:=S_k(\rho/2)$, $\chi_k:=\chi_{S_k(\rho/2)}$, $f_{k'}(y):= h_{t}(x-y)\chi_{k'}(x-y)$ and $g_k(y):=\varphi(y)\chi_k(y)$ for simplicity.
	As we mentioned before starting the proof, we do not have Young's type inequality associated with the uniformly local weak Zygmund type norm. Thus, we use H\"{o}lder's inequality with respect to $y$ to obtain
	\begin{equation}
	\begin{aligned}
	&\int_{\mathbb{R}^N} h_t(x-y) \chi_{k'}(x-y)|\varphi(y)\chi_{k}(y)|\,dy\\
	&=\int_{\mathbb{R}^N}|f_{k'}(y)g_k(y)|\,dy\\
	&\le \sup_{s>0} \left[ \left( \LOG{s} \right)^{-\frac{\gamma_2}{q}} \left( \LOG{s} \right)^{\frac{\gamma_2}{q}} \sup_{|E|=s} \int_{E}|f_{k'}(y)g_k(y)|\,dy \right]\\
	&\le \sup_{s>0} \left[ \left( \LOG{s} \right)^{-\frac{\gamma_2}{q}} \left( \LOG{s} \right)^{\frac{\gamma_2}{q}}\right.\\
	&\hspace{3cm}\left.\times\sup_{|E|=s} \left( \int_{E} |f_{k'}(y)|^{q^*}\,dy \right)^{\frac{1}{q^*}} \left( \int_{E} |g_k(y)|^{q} \,dy\right)^{\frac{1}{q}} \right]\\
	&\le \sup_{s>0} \left[ \left( \LOG{s} \right)^{-\frac{\gamma_2}{q}} \sup_{|E|=s} \left( \int_{E} |f_{k'}(y)|^{q^*}\,dy \right)^{\frac{1}{q^*}}\right]\\
	&\hspace{3cm} \times \sup_{k} \sup_{s>0}\left[ \left( \LOG{s} \right)^{\frac{\gamma_2}{q}} \left( \int_{E} |g_k(y)|^{q} \,dy\right)^{\frac{1}{q}} \right]\\
	&\le \sup_{s>0} \left[ \left( \LOG{s} \right)^{-\frac{\gamma_2}{q}} \sup_{|E|=s} \left( \int_{E} |f_{k'}(y)|^{q^*}\,dy \right)^{\frac{1}{q^*}}\right] \| \varphi\|_{q,\gamma_2;\rho}.
	\end{aligned}
	\end{equation}
	By \eqref{eq. kkprime finite}, it follows that $k\in B(-k';M)\cap \mathbb{Z}^N$. In particular, the summation $\sum_{k}$ must be finite for each $k'$, depending only on $N$. Therefore, it is verified by \eqref{eq. proof of semigroup 1} and \eqref{eq. range of kprime} that
	\begin{equation}\label{eq. proof of semigroup 2}
	\sum_{k} |u_{k}(t,x)| \lesssim \sum_{|k'|>2\sqrt{N}}\sup_{s>0}\left[ \left( \LOG{s} \right)^{-\frac{\gamma_2}{q}} \sup_{|E|=s}\left( \int_{E} |f_{k'}(y)|^{q^*}\,dy \right)^{\frac{1}{q^*}} \right] \|\varphi\|_{q,\gamma_2;\rho}.
	\end{equation}
	We further estimate the summation with respect to $k'>2\sqrt{N}$ in \eqref{eq. proof of semigroup 2}. Since $S_{k'}(\rho/2) \subset B(\zeta_{k'};\sqrt{N}\rho/2)=:B_{k'}$, we obtain by \eqref{eq. Zygmund norm rearrangement form} and \eqref{eq. zygmund norm for cut off2} that,
	\begin{equation}\label{eq. proof of semigroup 3}
	\begin{aligned}
	&\sup_{s>0} \left[ \left( \LOG{s} \right)^{-\frac{\gamma_2}{q}} \sup_{|E|=s} \left( \int_{E} |f_{k'}(y)|^{q^*}\,dy \right)^{\frac{1}{q^*}} \right]\\
	&\le \sup_{s>0} \left[ \left( \LOG{s} \right)^{-\frac{\gamma_2 q^*}{q}} \sup_{|E|=s} \int_{E} \left| h_t(y)\chi_{k'}(y) \right|^{q*}\,dy \right]^{\frac{1}{q*}}\\
	& \le \sup_{s>0} \left[ \left( \LOG{s} \right)^{-\frac{\gamma_2 q^*}{q}} \sup_{|E|=s} \int_{E} \left| h_t(y)\chi_{B_{k'}}(y) \right|^{q*}\,dy \right]^{\frac{1}{q*}}\\
	&\le \sup_{0<s<c\rho^N} \left[ \left( \LOG{s} \right)^{-\frac{\gamma_2 q^*}{q}} \int_{0}^{\min (s, c\rho^N)}\left|  \left( h_t \chi_{B_{k'}} \right)^{*}(\tau) \right|^{q^*}d\tau \right]^{\frac{1}{q^*}}\\
	&:=J_{k'}.
	\end{aligned}
	\end{equation}
	Here,
	\[
	c:= \omega_N \left( \frac{\sqrt{N}}{2} \right)^{N}
	\]
	and $\omega_N$ denotes the volume of the $N$-dimensional unit ball. By the monotonicity of the rearrangement and $\left( h_t\chi_{B_{k'}} \right)^*(0)=h_t\left( |\zeta_{k'}|-\sqrt{N}\rho/2 \right)$, it follows that
	\begin{equation}\label{eq. proof of semigroup 4}
	\begin{aligned}
	J_{k'}&=\sup_{0<s<c\rho^N} \left[ \left( \LOG{s} \right)^{-\frac{\gamma_2 q^*}{q}} \int_{0}^{s} \left| \left( h_t\chi_{B_{k'}} \right)^{*}(\tau) \right|^{q^*}d\tau \right]^{\frac{1}{q^*}}\\
	&\le\sup_{0<s<c\rho^N}\left[ \left( \LOG{s} \right)^{-\frac{\gamma_2 q^*}{q}} sh_t\left(|\zeta_k'|-\frac{\sqrt{N}}{2}\rho\right)^{q^*} \right]^{\frac{1}{q^*}}\\
	&\lesssim \left( \LOG{\rho} \right)^{-\frac{\gamma_2}{q}} \rho^{\frac{N}{q^*}} h_t \left( |\zeta_{k'}| -\frac{\sqrt{N}}{2}\rho \right),
	\end{aligned}
	\end{equation}
	since
	\[
	s\mapsto \left( \LOG{s} \right)^{-\frac{\gamma_2}{q}}s
	\]
	is monotone increasing. Thus, by \eqref{eq. proof of semigroup 1}, \eqref{eq. proof of semigroup 2}, \eqref{eq. proof of semigroup 3}, and \eqref{eq. proof of semigroup 4}, we obtain the desired estimate \eqref{eq. estimat for sum of uk}.
	
	Thus, in order to derive the conclusion, it remains to deal with
	\begin{equation}\label{eq. proof of semigroup 8}
		\sum_{|k'|>2\sqrt{N}} h_t\left( |\zeta_{k'}|-\frac{\sqrt{N}}{2}\rho \right)
	\end{equation}
	which corresponds to the estimate of the heat kernel far from the origin.
	
	\noindent\underline{\textbf{Step 3.}} We shall estimate \eqref{eq. proof of semigroup 8}. Note that for $y\in B_{k'}$, the condition $|y-\zeta_{k'}|\le\sqrt{N}\rho/2$ implies $|y|\le \sqrt{N}\rho/2 + |\zeta_{k'}|$. Since $|\zeta_{k'}|> 2\sqrt{N}\rho>\sqrt{N}\rho/2$ for $|k'|>2\sqrt{N}$, we have
	\[
	|\zeta_{k'}|= \frac{1}{2}\left( |\zeta_{k'}|+|\zeta_{k'}| \right)\ge \frac{1}{2}\left( |\zeta_{k'}| +\frac{\sqrt{N}}{2}\rho \right)\ge  \frac{1}{2}|y|\quad \text{for}\quad y\in B_{k'}.
	\]
	In particular, for $y\in B_{k'}$,
	\[
	1+t^{-\frac{1}{2}}\left( \frac{1}{2}|y|-\frac{\sqrt{N}}{2}\rho \right)\le 1+t^{-\frac{1}{2}}\left( |\zeta_{k'}|-\frac{\sqrt{N}}{2}\rho \right).
	\]
	Then, for  $y\in B_{k'}$, it follows that
	\[
	\begin{aligned}
	h_t\left( |\zeta_{k'}|-\frac{\sqrt{N}}{2}\rho \right)&= t^{-\frac{N}{2}}\left( 1+ t^{-\frac{1}{2}} \left( |\zeta_{k'}|-\frac{\sqrt{N}}{2}\rho \right)  \right)^{-N-2}\\
	&\le t^{-\frac{N}{2}} \left( 1+t^{-\frac{1}{2}} \left( \frac{1}{2}|y|-\frac{\sqrt{N}}{2}\rho \right)  \right)^{-N-2}.
	\end{aligned}
	\]
	Note that
	\[
	|k'|>2\sqrt{N}\ \Rightarrow\ |\zeta_{k'}|>2\sqrt{N} \rho\ \Rightarrow\ |y|>\sqrt{N}\rho\ \ \mbox{for}\ \ y\in B_{k'}.
	\]
	Therefore,
	\begin{equation}\label{eq. proof of semigroup 5}
	\begin{aligned}
	&\sum_{|k'|>2\sqrt{N}} h_t \left( |\zeta_{k'}|-\frac{\sqrt{N}}{2}\rho \right)\\
	&\le t^{-\frac{N}{2}} \sum_{|k'|>2\sqrt{N}} \frac{1}{|B_{k'}|} \int_{B_{k'}}\left( 1+t^{-\frac{1}{2}} \left( \frac{1}{2}|y|-\frac{\sqrt{N}}{2}\rho \right)  \right)^{-N-2}\,dy\\
	&\lesssim t^{-\frac{N}{2}}\rho^{-N} \int_{\mathbb{R}^N\setminus B(0;\sqrt{N}\rho) } \left( 1+t^{-\frac{1}{2}} \left( \frac{1}{2}|y|-\frac{\sqrt{N}}{2}\rho \right)  \right)^{-N-2}\,dy\\
	&\lesssim t^{-\frac{N}{2}}\rho^{-N}\int_{\sqrt{N}\rho}^{\infty} \left( 1+ t^{-\frac{1}{2}} \left( \frac{1}{2}r-\frac{\sqrt{N}}{2}\rho \right) \right)^{-N-2} r^{N-1}dr.
	\end{aligned}
	\end{equation}
	We divide the cases with respect to the dimension $N\ge 1$ in order to estimate \eqref{eq. proof of semigroup 5}. If $1\le N\le 4$, it follows that $1- t^{-\frac{1}{2}}\sqrt{N}\rho/2 \ge 0$ by the assumption $t^{\frac{1}{2}}\ge \rho$. Therefore,
	\begin{equation}
	\begin{aligned}
	&\int_{\sqrt{N}\rho}^{\infty} \left( 1+ t^{-\frac{1}{2}} \left( \frac{1}{2}r-\frac{\sqrt{N}}{2}\rho \right) \right)^{-N-2} r^{N-1}dr\\
	&\simeq t^{\frac{N}{2}-\frac{1}{2}}\int_{\sqrt{N}\rho}^{\infty} \left( 1+ t^{-\frac{1}{2}} \left( \frac{1}{2}r-\frac{\sqrt{N}}{2}\rho \right) \right)^{-N-2} \left( \frac{1}{2}t^{-\frac{1}{2}} r \right)^{N-1} dr \\
	&\lesssim t^{\frac{N}{2}-\frac{1}{2}} \int_{\sqrt{N}\rho}^{\infty} \left( 1+ t^{-\frac{1}{2}} \left( \frac{1}{2}r-\frac{\sqrt{N}}{2}\rho \right) \right)^{-N-2} \left(1+ \frac{1}{2}t^{-\frac{1}{2}} r -\frac{\sqrt{N}}{2}t^{-\frac{1}{2}}\rho \right)^{N-1} dr\\
	&\simeq t^{\frac{N}{2}-\frac{1}{2}}  \int_{\sqrt{N}\rho}^{\infty} \left( 1+ t^{-\frac{1}{2}} \left( \frac{1}{2}r-\frac{\sqrt{N}}{2}\rho \right) \right)^{-3} dr\\
	&\lesssim t^{\frac{N}{2}}.
	\end{aligned}
	\end{equation}
	Otherwize, if $N>4$, it follows that
	\[
	\begin{aligned}
	1+t^{-\frac{1}{2}} \left( \frac{1}{2} r -\frac{\sqrt{N}}{2} \rho \right)&= 1+ t^{-\frac{1}{2}} \frac{\sqrt{N}}{2} \left( \frac{1}{\sqrt{N}}r-\rho \right)\\
	&\ge 1 +t^{-\frac{1}{2}}\left( \frac{1}{\sqrt{N}}r-\rho \right).
	\end{aligned}
	\]
	Thus, by the assumption $t^{\frac{1}{2}}\ge \rho$, we get
	\begin{equation}
	\begin{aligned}
	&\int_{\sqrt{N}\rho}^{\infty} \left( 1+ t^{-\frac{1}{2}} \left( \frac{1}{2}r-\frac{\sqrt{N}}{2}\rho \right) \right)^{-N-2} r^{N-1}dr\\
	&\lesssim \int_{\sqrt{N}\rho}^{\infty} \left( 1+ t^{-\frac{1}{2}} \left( \frac{1}{\sqrt{N}}r-\rho \right) \right)^{-N-2} r^{N-1}dr\\
	&\simeq t^{\frac{N}{2}-\frac{1}{2}} \int_{\sqrt{N}\rho}^{\infty} \left( 1+ t^{-\frac{1}{2}} \left( \frac{1}{\sqrt{N}}r-\rho \right) \right)^{-N-2} \left( \frac{1}{\sqrt{N}} t^{-\frac{1}{2}} r \right)^{N-1}dr\\
	&\lesssim t^{\frac{N}{2}-\frac{1}{2}} \int_{\sqrt{N}\rho}^{\infty} \left( 1+ t^{-\frac{1}{2}} \left( \frac{1}{\sqrt{N}}r-\rho \right) \right)^{-N-2} \left(1+ \frac{1}{\sqrt{N}} t^{-\frac{1}{2}} r -t^{-\frac{1}{2}}\rho \right)^{N-1}dr\\
	&\lesssim t^{\frac{N}{2}}.
	\end{aligned}
	\end{equation}
	Therefore, by \eqref{eq. estimat for sum of uk} and \eqref{eq. proof of semigroup 5}, for any $x\in S_0(\rho/2)$, we obtain
	\begin{equation}\label{eq. proof of semigroup 6}
	\sum_{k} |u_k(t,x)|\lesssim \left( \LOG{\rho} \right)^{-\frac{\gamma_2}{q}} \rho^{-\frac{N}{q}} \| \varphi\|_{q,\gamma_2;\rho}.
	\end{equation}
	
	\noindent\underline{\textbf{Step 4.}} We shall deduce the desired conclusion. Since $(\chi_E)^*=\chi_{[0,|E|]}$, it follows that
	\begin{equation}\label{eq. proof of semigroup 7}
	\begin{aligned}
	\| \chi_{S_0(\rho/2)}\|_{\zygm{r}{\gamma_1}}&= \sup_{s>0}\left[ \left( \LOG{s} \right)^{\gamma_1} \int_{0}^{s} \chi_{[0,\rho^N]}(\tau)d\tau \right]^{\frac{1}{r}}\\
	&=\sup_{s>0}\left[ \left( \LOG{s} \right)^{\gamma_1} \int_{0}^{\min\left( s, \rho^N\right)}d\tau \right]^{\frac{1}{r}}\\
	&= \sup_{0<s<\rho^N} \left[ \left( \LOG{s} \right)^{\gamma_1} s \right]^{\frac{1}{r}}\\
	&\lesssim \left( \LOG{\rho} \right)^{\frac{\gamma_1}{r}}\rho^{\frac{N}{r}}.
	\end{aligned}
	\end{equation}
	Indeed, use \eqref{eq. equivalence of LOG} and take $L> 1$ so that
	\[
	s\mapsto s \left( \log\left( L +\frac{1}{s} \right) \right)^{\gamma_1}
	\]
	is monotone increasing. Therefore, combining with \eqref{eq. proof of semigroup 6} and \eqref{eq. proof of semigroup 7}, we deduce
	\[
	\begin{aligned}
	&\left\| \sum_{k} u_{k}(t) \chi_{S_0(\rho/2)} \right\|_{\zygm{r}{\gamma_1}}\\
	&\lesssim \left( \LOG{\rho} \right)^{-\frac{\gamma_2}{q}} \rho^{-\frac{N}{q}} \|\chi_{S_0(\rho/2)} \|_{\zygm{r}{\gamma_1}} \|\varphi\|_{q,\gamma_2;\rho}\\
	&\lesssim \left( \LOG{\rho} \right)^{-\frac{\gamma_2}{q}+\frac{\gamma_1}{r}}\rho^{-N\left( \frac{1}{q}-\frac{1}{r} \right)}\|\varphi\|_{q,\gamma_2;\rho}.
	\end{aligned}
	\]
	Note that for a function $f$, it follows that
	\begin{equation}
	\| f_{\chi_{S_0(\rho/2)}}\|_{\zygm{r}{\gamma_1}}\simeq \| f_{\chi_{B(0;\rho)} } \|_{\zygm{r}{\gamma_1}}.
	\end{equation}
	
\end{proof}

\begin{rem}\label{rem. proof of semigroup}
	{\rm
		We compare our method with the arguments in the proof of \cite[Proposition~2.1]{ARCD2004} or \cite[Theorem~3.1]{MaeTera06}. They use Young's inequality for the estimate related to $ \|G_t\ast \varphi\|_{L^{p}_{\rm ul}}$, whereas we cannot obtain Young's type inequality in terms of the norm $\| \cdot \|_{r,\gamma_1 ; \rho}$. Therefore, instead of Young's inequality, we use H\"{o}lder's inequality in \eqref{eq. proof of semigroup 2}, at the expense of the optimal order, to deduce the $L^{\infty}$ estimate of $\sum_{k} u_k(t,x)$. However, we can take advantage since we calculate the integral of the term corresponding to $G_t S_{{k'}}$ in \eqref{eq. proof of semigroup 5}, within the range excluding the neighborhood of the origin, although the previous studies calculate the integral of $G_t$ in the whole space, at the corresponding step of the proof. Thanks to this calculation, we can compensate for the loss due to the use of H\"{o}lder's inequality, and deduce the desired estimate.
		
		Furthermore, in \cite{MaeTera06}, they prove the estimate for $\rho=1$ without the loss of generality, by using the scaling argument and the homogeneity of the desired inequality. On the other hand, we need to prove Theorem~\ref{theo. semigroup estimate} for general $\rho>0$ directly, since the logarithmic function does not have the homogeneity.
	}
\end{rem}

\section{Proofs of Theorems~\ref{theorem 1} and \ref{theorem 2}}\label{section. proof of Theorem}

This section is devoted to the proofs of Theorems~\ref{theorem 1} and \ref{theorem 2}. Firstly, we prepare some lemmata, that are mainly required in order to deal with some logarithmic type functions.

\subsection{Lemmata}

We shall use the following lemma in the proofs of Theorems~\ref{theorem 1} and \ref{theorem 2}, when we estimate the nonlinear term. In particular, the $\alpha$-dependence of the right hand sides of \eqref{eq. main vanish} and \eqref{eq. main notvanish} derives from the following estimates. 

\begin{lem}\label{lem. log integral} The following hold.
	\begin{enumerate}
		\item[{\rm (i)}] For $0<a\le 1$ and $q>1$, there exists $C=C(q)>0$ such that
		\begin{equation}\label{eq. log integral not vanish}
			\int_{0}^{s} \tau^{-a} \left( \LOG{\tau} \right)^{-q}d\tau \le C \log\left( e+2s \right) s^{1-a}\left( \LOG{s} \right)^{-q+1}
		\end{equation}
		for all $s>0$.
		
		\item[{\rm (ii)}] For $0<a<1$ and $q\in \mathbb{R}$, there exists $C=C(a, q)>0$ such that
		\begin{equation}\label{eq. log integral vanish}
			\int_{0}^{s}\tau^{-a} \left( \LOG{\tau} \right)^{-q}
			d\tau\le C (1-a)^{-1} s^{1-a}\left( \LOG{s} \right)^{-q}
		\end{equation}
		for all $s>0$. In particular, we can take $C>0$ independently of $a\in (0,1)$ when $q\ge 0$.
	\end{enumerate}
\end{lem}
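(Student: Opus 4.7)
The plan is to prove the two parts separately, with part (ii) being essentially routine once one separates the sign of $q$, and part (i) requiring one delicate cancellation that avoids losing a factor of $(1-a)^{-1}$.

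For part (ii), I split cases. When $q \ge 0$, the monotonicity $\log(e+1/\tau) \ge \log(e+1/s)$ for $\tau \le s$ yields $(\log(e+1/\tau))^{-q} \le (\log(e+1/s))^{-q}$; pulling this factor out and integrating $\int_0^s \tau^{-a}d\tau = s^{1-a}/(1-a)$ gives the required estimate with $C$ independent of $a$. When $q < 0$, I use the elementary bound $\log(e+1/\tau) \le \log(e+1/s) + \log(s/\tau)$ valid for $\tau \le s$, combined with $(A+B)^{|q|} \le C_q(A^{|q|}+B^{|q|})$. The first resulting term produces $C_q s^{1-a}(\log(e+1/s))^{|q|}/(1-a)$, which matches the target; the second, after the substitution $v = \log(s/\tau)$, becomes $C_q s^{1-a}\Gamma(|q|+1)/(1-a)^{|q|+1}$, and this is absorbed into $C(a,q)(1-a)^{-1}s^{1-a}(\log(e+1/s))^{-q}$ thanks to $\log(e+1/s) \ge 1$.

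For part (i), the constant must not depend on $a \in (0,1]$, so losing a factor $(1-a)^{-1}$ is forbidden. I split the range of $s$. For $s \ge 1$, write $\int_0^s = \int_0^1 + \int_1^s$: the first piece is bounded by a constant $C_q$ via the substitution $w = \log(e+1/\tau)$, which converts it to the convergent integral $\int_{\log(e+1)}^\infty w^{-q}\frac{e^w}{e^w-e}\,dw$ for $q>1$; on $[1,s]$ the factor $(\log(e+1/\tau))^{-q} \le 1$ and $\int_1^s \tau^{-a}d\tau = (s^{1-a}-1)/(1-a) \le s^{1-a}\log s$ uniformly in $a \in (0,1]$ (with the right-hand side interpreted as $\log s$ at $a=1$). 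Since $\log(e+1/s) \simeq 1$ for $s \ge 1$, both contributions fit inside $C\log(e+2s)\, s^{1-a}(\log(e+1/s))^{1-q}$.

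The essential case is $s < 1$. Setting $A = \log(e+1/s)$ and substituting $\tau = su$, it suffices to bound $\int_0^1 u^{-a}(\log(e+1/(su)))^{-q}\,du \lesssim A^{1-q}$. The critical analytic input is the sandwich $\max(A,\log(1/u)) \le \log(e+1/(su)) \le A + \log(1/u)$ valid for $u \in (0,1]$ and $s \le 1$. I split the $u$-integral at $u_* = e^{-A}$: on $(u_*,1)$ the estimate $(\log(\cdots))^{-q} \le A^{-q}$ produces the contribution $A^{-q}(1 - u_*^{1-a})/(1-a)$, and here the essential trick is $1 - u_*^{1-a} = 1 - e^{-(1-a)A} \le (1-a)A$, which cancels the $(1-a)^{-1}$ and yields $A^{1-q}$; on $(0,u_*)$ the estimate $(\log(\cdots))^{-q} \le (\log(1/u))^{-q}$ combined with $v = \log(1/u)$ and $e^{-(1-a)v} \le 1$ gives $\int_A^\infty v^{-q}dv = A^{1-q}/(q-1)$. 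The main obstacle is precisely the cancellation on $(u_*,1)$: the elementary inequality $1 - e^{-x} \le x$ is what makes (i) hold uniformly in $a \in (0,1]$.
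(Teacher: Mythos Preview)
Your proof is correct and follows the same broad strategy as the paper's: split by the size of $s$ for (i), and for (ii) treat $q\ge 0$ by monotonicity and handle $q<0$ separately. The tactical choices differ in two places. For (i) with small $s$, the paper makes the single substitution $w=-\log\tau$ and bounds $e^{-(1-a)w}\le e^{-(1-a)|\log s|}=s^{1-a}$ over the whole range $w\ge|\log s|$, immediately yielding $s^{1-a}|\log s|^{1-q}/(q-1)$ without any further split; your route via $\tau=su$ and the cut at $u_*=e^{-A}$ (using $1-e^{-x}\le x$ on $(u_*,1)$ and $e^{-(1-a)v}\le1$ on $(0,u_*)$) reaches the same $A^{1-q}$ but with an extra step. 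For (ii) with $q<0$, the paper sets $\delta=(1-a)/2$, replaces $e$ by a large $L$ so that $\tau\mapsto\tau^{\delta}(\log(L+1/\tau))^{-q}$ is nondecreasing, and pulls that factor out; your additive bound $\log(e+1/\tau)\le\log(e+1/s)+\log(s/\tau)$ plus a Gamma computation also works, producing $(1-a)^{-|q|}$ inside $C(a,q)$, which is allowed since only the $q\ge0$ case must be uniform in $a$. One minor remark: in your $s\ge1$ step, the substitution $w=\log(e+1/\tau)$ on $\int_0^1$ actually gives $\int_{\log(e+1)}^\infty w^{-q}\,e^{w}(e^{w}-e)^{-(2-a)}\,dw$; your written integrand $e^{w}/(e^{w}-e)$ is an upper bound (valid since $e^{w}-e\ge1$ on that range and $2-a\ge1$), not the exact expression, but this does not affect the argument.
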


\begin{proof}
	(i) When $0<s\le 1/2$, it follows that
	\[
	\begin{aligned}
	\int_{0}^{s} \tau^{-a} \left( \LOG{\tau} \right)^{-q}d\tau&\lesssim \int_{0}^{s} \tau^{-a} \left| \log \tau \right| d\tau\\
	&\simeq \int_{-\log s}^{\infty} e^{-(1-a)w}w^{-q}dw\\
	&\lesssim s^{1-a}\frac{1}{q-1}\left| \log s \right|^{-q+1}\\
	&\lesssim s^{1-a} \left( \LOG{s} \right)^{-q+1}.
	\end{aligned}
	\]
	On the other hand, when $1/2<s$,
	\[
	\begin{aligned}
	\int_{0}^{1/2}\tau^{-a}\left( \LOG{\tau} \right)^{-q}d\tau &\lesssim \left( 2^{-1} \right)^{1-a}\left( \LOG{2^{-1}} \right)^{-q+1}\\
	&\lesssim s^{1-a} \left( \LOG{s} \right)^{-q+1}\\
	\end{aligned}
	\]
	due to the monotonicity. Moreover,
	\[
	\begin{aligned}
	\int_{1/2}^{s}\tau^{-a}\left( \LOG{\tau} \right)^{-q}d\tau&\le \left( \LOG{s} \right)^{-q} \int_{1/2}^{s} \tau^{-a} \tau^{1-1}d\tau\\
	&\le s^{1-a}\left( \LOG{s} \right)^{-q} \left( \log s-\log 1/2 \right)\\
	&\le \log\left( e+2s \right)s^{1-a}\left( \LOG{s} \right)^{-q+1}.
	\end{aligned}
	\]
	
	The argument (ii) is trivial when $q\ge 0$, since 
	\[
	s \mapsto \left( \LOG{s} \right)^{-q}
	\]
	is non-decreasing. Suppose that $q<0$ and set $\delta= (1-a)/2$. Take $L>1$ so that
	\[
	\tau \mapsto \tau^{\delta} \left( \log\left( L + \frac{1}{\tau} \right) \right)^{-q}
	\]
	is non-decreasing. Then,
	\[
	\begin{aligned}
	\int_{0}^{s} \tau^{-a} \left( \LOG{\tau} \right)^{-q} d\tau &\lesssim s^{\delta}\left( \log\left( L + \frac{1}{s} \right) \right)^{-q} \int_{0}^{s} \tau^{-a-\delta}d\tau\\
	&\lesssim (1-a)^{-1} s^{1-a}\left( \LOG{s} \right)^{-q}.
	\end{aligned}
	\]
\end{proof}

\begin{rem}
	{\rm
		We mainly use Lemma~\ref{lem. log integral} with $a=\alpha$ in the proofs of Theorems~\ref{theorem 1} and \ref{theorem 2}. Note that the right hand side of \eqref{eq. log integral not vanish} stays finite even if $a\nearrow 1$, whereas the right hand side of \eqref{eq. log integral vanish} diverges when $a\nearrow 1$. These differences inherit whether the decay of the logarithmic function at $\tau=0$ compensate for the integrability of $\tau^{-1}$ or not. In the proof of Theorem~\ref{theorem 1}, we use \eqref{eq. log integral vanish} in the treatment of the nonlinear term, since we just assume that $\gamma\ge 0$. On the other hand, we can use \eqref{eq. log integral not vanish}, under the further assumption $\gamma=N/2$ in Theorem~\ref{theorem 2}. Thus, we can obtain the argument which does not vanish as $\alpha \nearrow 1$.
	}
\end{rem}

The following lemma shall be used in the proof of Theorems~\ref{theorem 1} and \ref{theorem 2} in order to deal with the nonlocality due to the integral with respect to $\theta\in(0,\infty)$ in \eqref{eq. defi of P and S}.
\begin{lem}\label{lem. log constant}
	For $0<k<1$, the following hold.
	\begin{enumerate}
		\item[{\rm (i)}] It follows that
		\[
		k\LOG{s}\le\log\left( e+\frac{k}{s} \right) \le \frac{1}{k} \LOG{s}
		\]	
		for all $s>0$.
		
		\item[{\rm (ii)}] For all $\epsilon >0$, there exists $C>0$ such that
		\[
		\LOG{sk}\le C k^{-\epsilon} \LOG{s}
		\]
		for all $s>0$.
	\end{enumerate}
\end{lem}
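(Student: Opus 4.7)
The plan is to dispatch each of the four inequalities by elementary one-variable calculus after the substitution $x = 1/s$; nothing deeper than concavity of $\log$ and the standard estimate $\log(1/k) \le C_\epsilon k^{-\epsilon}$ is required. I would begin with the upper bound in (i): since $0 < k < 1$, one has $e + k/s \le e + 1/s$, so $\log(e+k/s) \le \LOG{s}$, and multiplying by $1/k \ge 1$, together with $\LOG{s} \ge 1$, yields $\log(e+k/s) \le k^{-1}\LOG{s}$.

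For the lower bound in (i), namely $k\LOG{s} \le \log(e + k/s)$, I would introduce the auxiliary function
\[
g(k) := \log(e + kx) - k\log(e + x),\qquad k \in [0,1],
\]
with $x = 1/s > 0$ held fixed. A direct computation gives $g(0) = \log e = 1$, $g(1) = 0$, and
\[
g''(k) = -\frac{x^2}{(e+kx)^2} \le 0,
\]
so $g$ is concave on $[0,1]$. The chord inequality then yields
\[
g(k) \ge (1-k)g(0) + k g(1) = 1 - k \ge 0,
\]
which is the claim. This is the only step of the argument that is not completely routine; identifying the correct auxiliary function $g$ and verifying its concavity constitutes the main (and essentially the only) obstacle.

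For (ii) I would rely on the coarse multiplicative bound
\[
e + \frac{1}{sk} \le \Bigl(1 + \frac{1}{k}\Bigr)\Bigl(e + \frac{1}{s}\Bigr) \le \frac{2}{k}\Bigl(e + \frac{1}{s}\Bigr),
\]
which is valid for $0 < k < 1$ since $1/k \ge 1$. Taking logarithms gives $\LOG{sk} \le \LOG{s} + \log(2/k)$, and because $\LOG{s} \ge 1$ the additive term can be absorbed multiplicatively:
\[
\LOG{sk} \le \bigl(1 + \log(2/k)\bigr)\LOG{s}.
\]
The proof is then completed by the well-known elementary inequality $\log(1/k) \le C_\epsilon k^{-\epsilon}$ valid for all $k \in (0,1]$ and any fixed $\epsilon > 0$, which bounds $1 + \log(2/k)$ by $C k^{-\epsilon}$ with a constant $C = C(\epsilon)$.
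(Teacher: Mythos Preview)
Your proof is correct. The approach diverges from the paper's in part (i), while part (ii) is essentially the same in spirit.

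For the upper bound in (i), you give a one-line monotonicity argument ($e+k/s\le e+1/s$ and $1/k\ge 1$), whereas the paper defines $g(s):=k^{-1}\LOG{s}-\log(e+k/s)$ and checks $g'(s)\le 0$ together with $\lim_{s\to\infty}g(s)=k^{-1}-1>0$. Your route is cleaner here; the paper's calculus is unnecessary overhead for this half. For the lower bound, the paper again differentiates in $s$: setting $f(s):=k^{-1}\log(e+k/s)-\LOG{s}$, it shows $f'(s)\le 0$ and $\lim_{s\to\infty}f(s)=k^{-1}-1>0$. You instead fix $x=1/s$ and prove concavity of $k\mapsto \log(e+kx)-k\log(e+x)$, which gives the sharper inequality $\log(e+k/s)-k\LOG{s}\ge 1-k$; this is a genuinely different mechanism (second derivative in the parameter $k$ versus first derivative in the variable $s$), and your concavity argument yields a quantitative lower bound the paper's method does not. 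For (ii) both arguments proceed by a multiplicative splitting of $e+1/(sk)$ followed by $\log(1/k)\le C_\epsilon k^{-\epsilon}$; the paper uses the bound $e+1/(sk)\le (e+1/s)(e+1/k)$, you use $e+1/(sk)\le (2/k)(e+1/s)$, which are interchangeable.
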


\begin{proof}
	(i) Let
	\[
	f(s):= \frac{1}{k}\log\left( e+\frac{k}{s} \right)-\LOG{s}.
	\]
	We easily check the lower estimate by
	\[
	f'(s)=s^{-1}\left( \frac{1}{es+1}-\frac{1}{es+k} \right)\le 0
	\]
	and
	\[
	\lim_{s\to \infty}f(s)=\frac{1}{k}-1>0.
	\]
	Conversely, set
	\[
	g(s):=\frac{1}{k} \LOG{s}-\log\left( e+ \frac{k}{s} \right).
	\]
	Then,
	\[
	g'(s)= \frac{s^{-1}}{(es+k)(es+1)} \left[ es\left( k-\frac{1}{k} \right) + (k-1) \right]\le 0
	\]
	and
	\[
	\lim_{s\to \infty} g(s)=\frac{1}{k}-1>0
	\]
	implies the upper estimate.
	
	(ii) For any $s>0$ and $0<k<1$, we have
	\[
	\begin{aligned}
	\LOG{sk}&\le \log\left( e+\frac{1}{s} \right)\left( e+\frac{1}{k} \right)\\
	&\le \LOG{s}+\LOG{k}\\
	&\le 2\LOG{k}\LOG{s}.
	\end{aligned}
	\]
	The conclusion is easily obtained since 
	\[
	(0,1)\ni k \to k^{\epsilon}\LOG{k}
	\]
	is bounded.
	
\end{proof}

The following is H\"{o}lder's inequality in the uniformly local weak Zygmund type space framework.
\begin{lem}[See {\cite[Lemma 2.2]{IokuIshigeKawakami}}]
	\label{lem. Holder}
	Let $1\le q_1,\ q_2\le \infty$ and $\gamma_1,\ \gamma_2\ge 0$ be such that
	\[
	1=\frac{1}{q_1}+\frac{1}{q_2},\ \mbox{and}\ \gamma=\frac{\gamma_1}{q_1}+\frac{\gamma_2}{q_2}.
	\]
	Then, for any $f_1\in \unifz{q_1}{\gamma_1}$, $f_2\in \unifz{q_2}{\gamma_2}$ and $\rho>0$, it follows that
	\begin{equation}
	\| f_1 f_2\|_{1,\gamma ;\rho}\le \|f_1\|_{q_1,\gamma_1;\rho} \|f_2\|_{q_2,\gamma_2;\rho}.
	\end{equation}
\end{lem}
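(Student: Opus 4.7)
The plan is to combine the rearrangement representation of the weak Zygmund norm (equation \eqref{eq. Zygmund norm rearrangement form}) with the O'Neil-type rearrangement inequality and a scalar H\"older inequality in the $\tau$ variable. Since the norms $\|\cdot\|_{q_i,\gamma_i;\rho}$ are defined as suprema over translates $z\in \mathbb{R}^N$, it suffices to prove, for an arbitrary $z\in \mathbb{R}^N$ and with $F_i := f_i\chi_{B(z;\rho)}$, the pointwise-in-$z$ estimate
\[
\|F_1 F_2\|_{\zygm{1}{\gamma}} \le \|F_1\|_{\zygm{q_1}{\gamma_1}}\,\|F_2\|_{\zygm{q_2}{\gamma_2}},
\]
and then pass to the supremum in $z$.

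First, I would use the rearrangement form \eqref{eq. Zygmund norm rearrangement form} to rewrite the left-hand side as
\[
\|F_1 F_2\|_{\zygm{1}{\gamma}} = \sup_{s>0} \left( \LOG{s} \right)^{\gamma} \int_0^s (F_1 F_2)^{*}(\tau)\,d\tau,
\]
and apply the O'Neil-type bound already recalled in the paper, namely $s\,(F_1 F_2)^{**}(s)\le \int_0^s F_1^{*}(\tau)F_2^{*}(\tau)\,d\tau$, which reduces the problem to controlling
\[
\sup_{s>0}\left( \LOG{s} \right)^{\gamma}\int_0^s F_1^{*}(\tau) F_2^{*}(\tau)\,d\tau.
\]

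The key step is then to split the logarithmic weight by the hypothesis $\gamma = \gamma_1/q_1 + \gamma_2/q_2$,
\[
\left( \LOG{s} \right)^{\gamma} = \left( \LOG{s} \right)^{\frac{\gamma_1}{q_1}} \left( \LOG{s} \right)^{\frac{\gamma_2}{q_2}},
\]
and to apply the scalar H\"older inequality in $\tau$ with the conjugate pair $(q_1,q_2)$:
\[
\int_0^s F_1^{*}(\tau) F_2^{*}(\tau)\,d\tau \le \left(\int_0^s F_1^{*}(\tau)^{q_1}\,d\tau\right)^{\frac{1}{q_1}} \left(\int_0^s F_2^{*}(\tau)^{q_2}\,d\tau\right)^{\frac{1}{q_2}}.
\]
Distributing the two logarithmic factors into the two H\"older pieces gives
\[
\left( \LOG{s} \right)^{\gamma}\int_0^s F_1^{*}F_2^{*}\,d\tau \le \prod_{i=1}^{2}\left(\left( \LOG{s} \right)^{\gamma_i}\int_0^s F_i^{*}(\tau)^{q_i}\,d\tau\right)^{\frac{1}{q_i}}.
\]
Taking the supremum over $s>0$ and using that the supremum of a product is bounded by the product of suprema, each factor becomes exactly $\|F_i\|_{\zygm{q_i}{\gamma_i}}$, which closes the argument.

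The endpoint case $q_i=\infty$ (so that $q_{3-i}=1$ and $\gamma=\gamma_{3-i}$) is handled separately: the factor $f_i$ is simply pulled out in $L^{\infty}$, and the remaining estimate reduces to the definition of $\|f_{3-i}\|_{1,\gamma;\rho}$. I do not foresee a real obstacle here; the proof is a transcription of the classical H\"older argument for Lorentz spaces, the only new ingredient being the multiplicativity $(\LOG{s})^{\gamma} = (\LOG{s})^{\gamma_1/q_1}(\LOG{s})^{\gamma_2/q_2}$, which matches exactly the balance condition on $(\gamma,\gamma_1,\gamma_2)$ imposed in the hypothesis.
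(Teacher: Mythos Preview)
Your argument is correct. The paper does not give its own proof of this lemma; it simply cites \cite[Lemma~2.2]{IokuIshigeKawakami} and uses the result as a black box. What you wrote is precisely the natural proof one would expect there: localize to a ball, pass to rearrangements via \eqref{eq. Zygmund norm rearrangement form}, invoke the O'Neil bound $s(F_1F_2)^{**}(s)\le \int_0^s F_1^{*}F_2^{*}\,d\tau$ already recorded in the paper, split the logarithmic weight according to $\gamma=\gamma_1/q_1+\gamma_2/q_2$, and apply H\"older in $\tau$. The endpoint $q_i=\infty$ is handled exactly as you say. There is nothing to add or correct.
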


The following is needed when we deal with the nonlinear term $|u|^{p-1}u$.
\begin{lem}[See {\cite[Lemma 2.3]{IokuIshigeKawakami}}]
	\label{lem. power and norm}
	For any $1\le q<\infty$, $\gamma\ge 0$, and $r>0$ with $rq\ge 1$, it follows that
	\[
	\| |f|^{r} \|_{q,\gamma;\rho}= \| f\|_{rq.\gamma;\rho}^{r}
	\]
	for all $f\in \unifz{q}{\gamma}$ and $\rho>0$.
\end{lem}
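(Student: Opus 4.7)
The plan is to prove this identity directly from the definitions; no deep analysis is required. The essential observation is that when we substitute $|f|^{r}$ in place of the function inside the norm $\|\cdot\|_{q,\gamma;\rho}$, the integrand that appears is $|f|^{rq}$, which is exactly the integrand inside the norm $\|\cdot\|_{rq,\gamma;\rho}$; the weighting by logarithmic factors and the truncation by $\chi_{B(z;\rho)}$ are unaffected by taking powers, so the two expressions match after accounting for the outer exponents.

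First, I would unpack the left-hand side. By definition of $\|\cdot\|_{q,\gamma;\rho}$ together with the rearrangement representation \eqref{eq. Zygmund norm rearrangement form} of the weak Zygmund norm, one has
\[
\| |f|^{r}\|_{q,\gamma;\rho}^{q} = \sup_{z\in \mathbb{R}^N}\sup_{s>0}\left[ \left( \LOG{s} \right)^{\gamma} \sup_{|E|=s}\int_{E}\left||f(x)|^{r}\chi_{B(z;\rho)}(x)\right|^{q}dx \right],
\]
and since $r>0$, the integrand equals $|f(x)|^{rq}\chi_{B(z;\rho)}(x)$.

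Second, I would unpack the right-hand side in the same way. Using $rq\ge 1$ so that the norm $\|\cdot\|_{rq,\gamma;\rho}$ is among those defined in \eqref{eq. definition of Zygmund norm}, one computes
\[
\| f\|_{rq,\gamma;\rho}^{rq} = \sup_{z\in \mathbb{R}^N}\sup_{s>0}\left[ \left( \LOG{s} \right)^{\gamma} \sup_{|E|=s}\int_{E}|f(x)|^{rq}\chi_{B(z;\rho)}(x)\,dx \right].
\]
The two right-hand sides are literally identical, so $\| |f|^{r}\|_{q,\gamma;\rho}^{q} = \| f\|_{rq,\gamma;\rho}^{rq}$, and taking the $(1/q)$-th power yields the claim.

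There is no real obstacle: the suprema over $z\in\mathbb{R}^N$, over $s>0$, and over measurable sets $E$ with $|E|=s$ are all taken over independent parameters, so comparing the two sides is purely a matter of bookkeeping. The hypothesis $rq\ge 1$ is used only to guarantee that the right-hand side is a well-defined norm in the framework of the paper, and $r>0$ is used to ensure $|f|^{r}$ is measurable and pointwise finite almost everywhere whenever $f$ is. In particular, no integrability or Hölder-type argument is needed — this is a pure homogeneity identity.
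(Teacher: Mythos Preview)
Your proof is correct; the identity follows immediately from the definitions once one notes that $\chi_{B(z;\rho)}^{q}=\chi_{B(z;\rho)}$ and that $x\mapsto x^{r}$ is monotone on $[0,\infty)$, so that suprema commute with the outer power. The paper does not supply its own proof of this lemma but simply cites \cite[Lemma~2.3]{IokuIshigeKawakami}; your direct verification is exactly the argument one would expect there.
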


\subsection{Proof of Theorem~\ref{theorem 1}}

\begin{proof}[Proof of Theorem~\ref{theorem 1}]
	Let $0\le \tilde{\gamma} <\gamma$ be fixed arbitrarily. We define a function space as follows:
	\[
	\begin{aligned}
	\mathcal{X}_T&:= \left\{ u\in L^{\infty}_{\rm loc}(0,T; \unifz{p}{\tilde{\gamma}}) ; \|u\|_{\mathcal{X}_T}<\infty\right\},\\
	\|u\|_{\mathcal{X}_T}&:= \sup_{0<t<T} t^{\frac{\alpha N}{2}\left( 1-\frac{1}{p} \right)}\left( \LOG{t} \right)^{\gamma - \frac{\tilde{\gamma}}{p}} \|u(t)\|_{p,\tilde{\gamma}; T^{\alpha/2}},
	\end{aligned}
	\]
	and its closed ball
	\[
	\mathcal{B}_{K,T}:= \left\{ u\in \mathcal{X}_T; \|u\|_{\mathcal{X}_T}\le K \right\}.
	\]
	Let
	\[
	\Phi(u)(t):= P_\alpha(t)\varphi+\alpha\int_0^t (t-s)^{\alpha-1}S_\alpha(t-s)|u(s)|^{p-1}u(s) \,ds.
	\]
	We use the fixed point theorem for $\Phi$ in $\mathcal{B}_{K,T}$ for appropriate $K>0$.
	
	\noindent\underline{\textbf{Step 1.}} We show that $\Phi(\mathcal{B}_{K,T})\subset \mathcal{B}_{K,T}$.
	
	Firstly, by Theorem~\ref{theo. semigroup estimate}, we have
	\[
	\begin{aligned}
	&\left\| P_{\alpha}(t)\varphi \right\|_{p,\tilde{\gamma};T^{\alpha/2}}\\
	&\le \int_{0}^{\infty} h_{\alpha}(\theta) \| e^{t^{\alpha}\theta \Delta} \varphi\|_{p,\tilde{\gamma};T^{\alpha/2}}d\theta\\
	&\lesssim \int_{0}^{\infty}h_{\alpha}(\theta) \left[ T^{-\frac{\alpha N}{2}\left( 1-\frac{1}{p} \right)}\left( \LOG{T^{\alpha/2}} \right)^{\frac{\tilde{\gamma}}{p}-\gamma} \right.\\
	&\hspace{1cm}\left.+\ t^{-\frac{\alpha N}{2}\left(1-\frac{1}{p} \right)} \left( \LOG{t^{\alpha}\theta} \right)^{\frac{\tilde{\gamma}}{p}-\gamma} \theta^{-\frac{N}{2}\left( 1-\frac{1}{p} \right)} \right] d\theta \|\varphi\|_{1,\gamma;T^{\alpha/2}}\\
	&\lesssim t^{-\frac{\alpha N}{2}\left( 1-\frac{1}{p} \right)} \left( \LOG{t} \right)^{\frac{\tilde{\gamma}}{p}-\gamma }\| \varphi \|_{1,\gamma;T^{\alpha/2}}\\
	&\hspace{1cm}+ t^{-\frac{\alpha N}{2}\left( 1-\frac{1}{p} \right)} \int_{0}^{\infty} h_{\alpha}(\theta) \left(\LOG{t^{\alpha}\theta}\right)^{\frac{\tilde{\gamma}}{p}-\gamma} \theta^{-\frac{N}{2}\left(1-\frac{1}{p}\right)}d\theta   \| \varphi \|_{1,\gamma;T^{\alpha/2}}.
	\end{aligned}
	\]
	Indeed, for the first term, if necessary we use \eqref{eq. equivalence of LOG} and take $L>1$ so that
	\[
	s\mapsto s^{-\frac{\alpha N}{2}\left( 1-\frac{1}{p} \right)}\left( \log\left( L + \frac{1}{s} \right) \right)^{\frac{\tilde{\gamma}}{p}-\gamma}
	\]
	is monotone decreasing. For the estimate for the second term, we use Lemma~\ref{lem. log constant} to deal with the integral with respect to $\theta\in (0,\infty)$:
	
	If $0<\theta<1\ \Leftrightarrow\ 1<\frac{1}{\theta}<\infty$, then by the monotonicity,
	\[
	\LOG{t^{\alpha}\theta}\ge \LOG{t^{\alpha}}.
	\]
	
	If $1\le \theta <\infty\ \Leftrightarrow\ 0<\frac{1}{\theta}\le 1$, by Lemma~\ref{lem. log constant} (i),
	\[
	\LOG{t^{\alpha}\theta}\ge \frac{1}{\theta} \LOG{t^{\alpha}}.
	\]
	By $\frac{\tilde{\gamma}}{p}-\gamma<0$, $-\frac{N}{2}(1-\frac{1}{p})>-1$ and \eqref{eq. integral of halpha}, we obtain
	\[
	\begin{aligned}
	&\left( \int_{0}^{1}+\int_{1}^{\infty} \right) h_{\alpha}(\theta) \left(\LOG{t^{\alpha}\theta}\right)^{\frac{\tilde{\gamma}}{p}-\gamma} \theta^{-\frac{N}{2}\left(1-\frac{1}{p}\right)}d\theta\\
	&\le \left[ \int_{0}^{1} h_{\alpha}(\theta) \theta^{-\frac{N}{2}\left(1-\frac{1}{p} \right)}d\theta + \int_{1}^{\infty} h_{\alpha}(\theta)\theta^{-\frac{N}{2}\left(1-\frac{1}{p} \right) +\gamma -\frac{\tilde{\gamma}}{p} } d\theta \right] \left( \LOG{t} \right)^{\frac{\tilde{\gamma}}{p}-\gamma}\\
	&\le \left[ \int_{0}^{\infty} h_{\alpha}\theta^{-\frac{N}{2}\left( 1-\frac{1}{p} \right)} d\theta + \int_{0}^{\infty} h_{\alpha}(\theta) \theta^{\gamma-\frac{\tilde{\gamma}}{p}} d\theta \right]\left( \LOG{t} \right)^{\frac{\tilde{\gamma}}{p}-\gamma}\\
	&\le C \left( \LOG{t} \right)^{\frac{\tilde{\gamma}}{p}-\gamma}
	\end{aligned}
	\]
	where
	\[
	C= \frac{\Gamma\left( 1-\frac{N}{2}\left( 1-\frac{1}{p} \right) \right)}{\Gamma\left( 1-\frac{\alpha N}{2}\left( 1-\frac{1}{p} \right) \right)} + \frac{\Gamma \left( 1+\gamma-\frac{\tilde{\gamma}}{p} \right)}{ \Gamma\left( 1+ \alpha \left( \gamma -\frac{\tilde{\gamma}}{p} \right) \right) },
	\]
	which is finite independently of $\alpha \in (0,1)$. Therefore,
	\[
	\| P_{\alpha}(\cdot)\varphi\|_{\mathcal{X}_T}\le C\|\varphi\|_{1,\gamma;T^{\alpha/2}}.
	\]
	
	Next, we estimate the nonlinear term. That is, for $u\in \mathcal{B}_{K,T}$, we shall calculate
	\[
	\begin{aligned}
	\int_{0}^{t} (t-s)^{\alpha-1}\int_{0}^{\infty} \theta h_{\alpha}(\theta) \|e^{(t-s)^{\alpha}\theta \Delta} |u(s)|^{p-1}u(s)\|_{p,\tilde{\gamma};T^{\alpha/2}}d\theta ds.
	\end{aligned}
	\]
	By Theorem~\ref{theo. semigroup estimate} and Lemma~\ref{lem. power and norm},
	\[
	\begin{aligned}
	&\left\| e^{(t-s)^{\alpha}\theta \Delta} |u(s)|^{p-1}u(s) \right\|_{p,\tilde{\gamma};T^{\alpha/2}}\\
	&\lesssim \left[ T^{-\frac{\alpha N}{2}\left(1-\frac{1}{p}\right)} \left( \LOG{T} \right)^{\frac{\tilde{\gamma}}{p}-\tilde{\gamma}}\right.\\
	&\left.\hspace{1cm} + \left( (t-s)^{\alpha}\theta \right)^{-\frac{\alpha N}{2}\left( 1-\frac{1}{p} \right)} \left( \LOG{(t-s)^{\alpha}\theta} \right)^{\frac{\tilde{\gamma}}{p}-\tilde{\gamma}} \right] \| |u(s)|^p\|_{1,\tilde{\gamma};T^{\alpha/2}}\\
	&\lesssim \left[ t^{-\frac{\alpha N}{2}\left(1-\frac{1}{p}\right)} \left( \LOG{t} \right)^{\frac{\tilde{\gamma}}{p}-\tilde{\gamma}}\right.\\
	&\left.\hspace{1cm} + \left( (t-s)^{\alpha}\theta \right)^{-\frac{\alpha N}{2}\left( 1-\frac{1}{p} \right)} \left( \LOG{(t-s)^{\alpha}\theta} \right)^{\frac{\tilde{\gamma}}{p}-\tilde{\gamma}} \right] \| u(s)\|^{p}_{p,\tilde{\gamma};T^{\alpha/2}}.
	\end{aligned}
	\]
	Similar to the argument of the estimate for $P_{\alpha}(t)\varphi$, we can deal with the integral with respect to $\theta$, by using \eqref{eq. integral of halpha} and Lemma~\ref{lem. log constant} (i). Therefore,
	\begin{equation}\label{eq. integral of I}
	\begin{aligned}
	&\int_{0}^{t} (t-s)^{\alpha-1}\int_{0}^{\infty} \theta h_{\alpha}(\theta) \|e^{(t-s)^{\alpha}\theta \Delta} |u(s)|^{p-1}u(s)\|_{p,\tilde{\gamma};T^{\alpha/2}}d\theta ds\\
	&\lesssim \int_{0}^{t} (t-s)^{\alpha-1-\frac{\alpha N}{2}\left( 1-\frac{1}{p} \right)} \left( \LOG{t-s} \right)^{\frac{\tilde{\gamma}}{p}-\tilde{\gamma}}\\
	&\hspace{3cm}\times s^{-\frac{\alpha N}{2}(p-1)}\left( \LOG{s} \right)^{\tilde{\gamma}-\gamma p} ds K^{p}\\
	&=: \int_{0}^{t} I(t,s)ds K^{p}.
	\end{aligned}
	\end{equation}
	Note that $\frac{N(p-1)}{2}=1$ since $p=p_F$. For $0<s<t/2$, by Lemma~\ref{lem. log integral} (ii) with $a=\alpha$ and $q= \gamma p-\tilde{\gamma}\ge 0$, we obtain
	\[
	\begin{aligned}
	&\int_{0}^{t/2}I(t,s)ds\\
	&\lesssim t^{\alpha-1-\frac{\alpha N}{2}\left( 1-\frac{1}{p} \right)} \left( \LOG{t} \right)^{\frac{\tilde{\gamma}}{p}-\tilde{\gamma}}\int_{0}^{t/2}s^{-\alpha} \left( \LOG{s} \right)^{\tilde{\gamma}-\gamma p}ds\\
	&\lesssim (1-\alpha)^{-1} t^{-\frac{\alpha N}{2}\left( 1-\frac{1}{p} \right)} \left(\LOG{t} \right)^{\frac{\tilde{\gamma}}{p}-\gamma p}.
	\end{aligned}
	\]
	For $t/2\le s<t$, again by Lemma~\ref{lem. log integral} (ii) with
	\[
	a= 1-\alpha + \frac{\alpha N}{2} \left( 1-\frac{1}{p} \right)\in (0,1)\ \ \text{and}\ \ q=\tilde{\gamma}-\frac{\tilde{\gamma}}{p},
	\]
	we get
	\[
	\begin{aligned}
	&\int_{t/2}^{t} I(t,s)ds\\
	&\lesssim (1-a)^{-1} t^{-\alpha} \left( \LOG{t} \right)^{\tilde{\gamma}-\gamma p}\int_{t/2}^{t} (t-s)^{\alpha -1 -\frac{\alpha N}{2}\left( 1-\frac{1}{p} \right)} \left( \LOG{t-s} \right)^{\frac{\tilde{\gamma}}{p}-\tilde{\gamma}}ds\\
	&\lesssim \alpha^{-1} t^{-\frac{\alpha N}{2}\left( 1-\frac{1}{p} \right)} \left(\LOG{t} \right)^{\frac{\tilde{\gamma}}{p}-\gamma p}.
	\end{aligned}
	\]
	Note that in this setting,
	\[
	1-a = \alpha \left( 1-\frac{N}{2}\left( 1-\frac{1}{p} \right) \right).
	\]
	Therefore,
	\[
	\begin{aligned}
	&t^{\frac{\alpha N}{2}\left( 1-\frac{1}{p} \right)} \left( \LOG{t} \right)^{\gamma-\frac{\tilde{\gamma}}{p}} \left( \int_{0}^{t/2}+ \int_{t/2}^{t} \right) I(t,s)ds\\
	&\lesssim \alpha^{-1} (1-\alpha)^{-1} \left( \LOG{t} \right)^{\gamma(1-p)}.
	\end{aligned}
	\]
	Thus, $\Phi(\mathcal{B}_{K,T})\subset \mathcal{B}_{K,T}$ is guaranteed provided
	\begin{equation}\label{eq. into ball1}
	C \| \varphi\|_{1,\gamma;T^{\alpha/2}}+ C K^p \alpha^{-1}(1-\alpha)^{-1}\left( \LOG{T} \right)^{\gamma(1-p)}\le K.
	\end{equation}
	This is possible if we take 
	\begin{equation}\label{eq. into ball2}
	K=c_1^{*} \alpha^{\frac{N}{2}}(1-\alpha)^{\frac{N}{2}}\left( \LOG{T} \right)^{\gamma}
	\end{equation}
	and
	\begin{equation}\label{eq. into ball3}
	\|\varphi\|_{1,\gamma;T^{\alpha/2}}=c_2^{*} K \le C \alpha^{\frac{N}{2}}(1-\alpha)^{\frac{N}{2}} \left( \LOG{T} \right)^{\gamma}
	\end{equation}
	for sufficiently small $c_1^*, c_2^{*}>0$.

	\noindent\underline{\textbf{Step 2.}} We prove that $\Phi: \mathcal{B}_{K,T}\to \mathcal{B}_{K,T}$ is a contraction mapping for appropriate $K>0$. For $u,v \in \mathcal{B}_{K,T}$,
	\begin{equation}
	\begin{aligned}
	&\| \Phi(u)(t)-\Phi(v)(t) \|_{p,\tilde{\gamma};T^{\alpha/2}}\\
	&\lesssim \int_{0}^{t} (t-s)^{\alpha-1} \int_{0}^{\infty} \theta h_{\alpha}(\theta) \left\| e^{(t-s)^{\alpha}\theta \Delta }\left( |u(s)|^{p-1}u(s)-|v(s)|^{p-1}v(s) \right)  \right\|_{p,\tilde{\gamma};T^{\alpha/2}} d\theta ds.
	\end{aligned}
	\end{equation}
	By Lemma~\ref{lem. Holder}, we have
	\begin{equation}
	\begin{aligned}
	&\left\| |u(s)|^{p-1}u(s)- |v(s)|^{p-1}v(s) \right\|_{1,\tilde{\gamma}; T^{\alpha/2}}\\
	&\lesssim \left(\left\| u(s)^{p-1} \right\|_{\frac{p}{p-1},\tilde{\gamma} T^{1/2}} + \left\| v(s)^{p-1} \right\|_{\frac{p}{p-1},\tilde{\gamma} T^{1/2}}\right) \|u(s)-v(s)\|_{p,\tilde{\gamma};T^{\alpha/2}}\\
	& = \left( \| u(s)\|^{p-1}_{p,\tilde{\gamma};T^{\alpha/2}} + \| v(s)\|^{p-1}_{p,\tilde{\gamma};T^{\alpha/2}} \right)\|u(s)-v(s)\|_{p,\tilde{\gamma};T^{\alpha/2}}.
	\end{aligned}
	\end{equation}
	This together with Theorem~\ref{theo. semigroup estimate} provides that
	\begin{equation}
	\begin{aligned}
	&\| \Phi(u)(t)-\Phi(v)(t) \|_{p,\tilde{\gamma};T^{\alpha/2}}\\
	&\lesssim \int_{0}^{t} (t-s)^{\alpha-1} \int_{0}^{\infty} \theta h_{\alpha}(\theta) 
	\left[ T^{-\frac{\alpha N}{2}\left(1-\frac{1}{p} \right)} \left( \LOG{T} \right)^{\frac{\tilde{\gamma}}{p}-\tilde{\gamma}}\right.\\
	&\left.\hspace{1cm} + \left( (t-s)^{\alpha}\theta \right)^{-\frac{\alpha N}{2}\left( 1-\frac{1}{p} \right)} \left( \LOG{(t-s)^{\alpha}\theta} \right)^{\frac{\tilde{\gamma}}{p}-\tilde{\gamma}}
	\right]\\
	&\hspace{1cm}\times\left( \| u(s)\|^{p-1}_{p,\tilde{\gamma};T^{\alpha/2}} + \| v(s)\|^{p-1}_{p,\tilde{\gamma};T^{\alpha/2}} \right)\|u(s)-v(s)\|_{p,\tilde{\gamma};T^{\alpha/2}}
	d\theta ds.
	\end{aligned}
	\end{equation}
	Same as Step 1., we obtain
	\begin{equation}
	\begin{aligned}
	&\| \Phi(u)(t)-\Phi(v)(t) \|_{p,\tilde{\gamma};T^{\alpha/2}}\\
	&\lesssim \int_{0}^{t} (t-s)^{\alpha-1-\frac{\alpha N}{2}\left( 1-\frac{1}{p} \right)} \left( \LOG{t-s} \right)^{\frac{\tilde{\gamma}}{p}-\tilde{\gamma}}\\
	&\hspace{1cm} \times s^{-\frac{\alpha N}{2}(p-1)}\left( \LOG{s} \right)^{\tilde{\gamma}-\gamma p} ds K^{p-1} \|u-v\|_{\mathcal{X}_T}\\
	&= \int_{0}^{t} I(t,s)ds K^{p-1}\| u-v\|_{\mathcal{X}_T}.
	\end{aligned}
	\end{equation}
	Therefore, $\Phi : \mathcal{B}_{K,T}\to \mathcal{B}_{K,T}$ is contraction if
	\begin{equation}\label{eq. contraction}
	C K^{p-1}\alpha^{-1}(1-\alpha)^{-1}\left( \LOG{T} \right)^{\gamma(1-p)}\le \frac{1}{2}.
	\end{equation}
	This is possible if we take $c_1^{*}>0$ small enough. By \eqref{eq. into ball1}, \eqref{eq. into ball2}, \eqref{eq. into ball3}, and \eqref{eq. contraction}, we obtain the fixed point $u\in \mathcal{B}_{K,T}$ of $\Phi$.
	
	\noindent\underline{\textbf{Step 3.}} Finally, we prove \eqref{eq. regularity(i)} and \eqref{eq. initial valure (i)}. For \eqref{eq. regularity(i)}, it suffices to show that
	\[
	\| u(t)\|_{1,\gamma ;T^{\alpha/2}} \le C \|\varphi\|_{1,\gamma;T^{\alpha/2}}.
	\]
	Since $u=\Phi(u)$ and $\|P_{\alpha}(t)\|_{1,\gamma;T^{\alpha/2}}\le C \|\varphi\|_{1,\gamma;T^{\alpha/2}}$, it remains to check the nonlinear term. It follows that
	\begin{equation}\label{eq. step3.1}
	\begin{aligned}
	&\| e^{(t-s)^{\alpha}\theta \Delta} |u(s)|^{p-1}u(s) \|_{1,\gamma;T^{\alpha/2}}\\
	&\lesssim \left[ \left(\LOG{T}\right)^{\gamma-\tilde{\gamma}} + \left( \LOG{(t-s)^{\alpha}\theta} \right)^{\gamma-\tilde{\gamma}} \right] \|u(s)\|^{p}_{p,\tilde{\gamma};T^{\alpha/2}}
	\end{aligned}
	\end{equation}
	since $u\in \mathcal{B}_{K,T}$, Theorem~\ref{theo. semigroup estimate} and Lemma~\ref{lem. power and norm}. Moreover, by Lemma~\ref{lem. log constant} (ii), for any $\epsilon >0$, it follows that
	\begin{equation}\label{eq. step3.2}
	\LOG{(t-s)^{\alpha}\theta}\le \left\{
	\begin{aligned}
	& C\theta^{-\epsilon}\LOG{(t-s)^{\alpha}}\ &&\mbox{if}\ \ 0<\theta\le 1,\\
	& \theta \LOG{(t-s)^{\alpha}} &&\mbox{if}\ \ 1<\theta<\infty.
	\end{aligned}
	\right.
	\end{equation}
	We take $\epsilon>0$ sufficiently small so that $\epsilon (\gamma-\tilde{\gamma})<1$. Therefore, by $u\in \mathcal{B}_{K,T}$ and \eqref{eq. integral of halpha},
	\begin{equation}\label{eq. step3.3}
	\begin{aligned}
	&\| u(t)-P_{\alpha}\varphi \|_{1,\gamma;T^{\alpha/2}}\\
	&\lesssim \int_{0}^{t} (t-s)^{\alpha-1} \left( \LOG{(t-s)^{\alpha}} \right)^{\gamma-\tilde{\gamma}} s^{-\frac{\alpha N(p-1)}{2}} \left( \LOG{s} \right)^{\tilde{\gamma}-p\gamma}ds K^{p}\\
	&=:\int_{0}^{t} J(s,t)ds K^{p}.
	\end{aligned}
	\end{equation}
	Firstly, for $0<s<t/2$, by Lemma~\ref{lem. log integral} (ii) with $a=\alpha$ and $q=p\gamma -\tilde{\gamma}$,
	\begin{equation}
	\begin{aligned}
	\int_{0}^{t/2} J(s,t) &\lesssim \int_{0}^{t/2} s^{-\alpha} \left( \LOG{s}\right)^{\tilde{\gamma}-p\gamma } ds t^{\alpha-1} \left( \LOG{t} \right)^{\gamma-\tilde{\gamma}}\\
	&\lesssim (1-\alpha)^{-1} \left( \LOG{T} \right)^{\gamma(1-p)}.
	\end{aligned}
	\end{equation}
	Moreover, for $t/2<s<t$, by Lemma~\ref{lem. log integral} (ii) with $a=1-\alpha$ and $q=\tilde{\gamma}-\gamma$,
	\begin{equation}
	\begin{aligned}
	\int_{t/2}^{t} J(s,t)ds &\lesssim \int_{0}^{t/2} s^{\alpha-1} \left( \LOG{s} \right)^{\gamma-\tilde{\gamma}} ds t^{-\alpha} \left( \LOG{t} \right)^{\tilde{\gamma}-p\gamma}\\
	&\lesssim \alpha^{-1} \left( \LOG{T} \right)^{\gamma(1-p)}.
	\end{aligned}
	\end{equation}
	Hence, by \eqref{eq. into ball1},
	\begin{equation}
	\begin{aligned}
	\| u(t)-P_{\alpha}(t)\varphi\|_{1,\gamma;T^{\alpha/2}}&\le C \alpha^{-1}(1-\alpha)^{-1} \left( \LOG{T} \right)^{\gamma(1-p)} K^{p}\\
	&\le C \|\varphi\|_{1,\gamma;T^{\alpha/2}}.
	\end{aligned}
	\end{equation}
	Furthermore, by repeating the above calculation and using \eqref{eq. into ball1}, \eqref{eq. into ball2}, we get
	\begin{equation}\label{eq. step3 convergence}
	\begin{aligned}
	\| u(t) - P_{\alpha}(t)\varphi\|_{1,\gamma;T^{\alpha/2}} &\lesssim \alpha^{-1}(1-\alpha)^{-1} \left( \LOG{t} \right)^{\gamma(1-p)} K^{p}\\
	&\lesssim \alpha^{N/2} (1-\alpha)^{\frac{N}{2}} \left( \LOG{T} \right)^{\gamma}\left( \LOG{t} \right)^{\gamma(1-p)},
	\end{aligned}
	\end{equation}
	whence follows \eqref{eq. initial valure (i)}.
	
\end{proof}

\subsection{Proof of Theorem~\ref{theorem 2}}

\begin{proof}[Proof of Theorem~\ref{theorem 2}]
	The basic plan of the proof is almost the same as Theorem~\ref{theorem 1}. However, we can use Lemma~\ref{lem. log integral} (i) instead of Lemma~\ref{lem. log integral} (ii), in order to avoid the dependence on $1-\alpha$. We just illustrate estimates for the nonlinear term which includes major differences by using Lemma~\ref{lem. log integral} (i).
	
	\noindent\underline{\textbf{Step 1.}} Let $0\le \tilde{\gamma} <N/2$ be fixed arbitrarily. We apply the fixed point theorem for $\Phi$ in the functional space $\mathcal{X}_{T}$ with $\gamma=N/2$.
	
	Same as the proof of Theorem~\ref{theorem 1}, we shall estimate \eqref{eq. integral of I}. Note that for $\gamma=N/2$,
	\[
	\tilde{\gamma}-\gamma p= \tilde{\gamma} - \gamma\left(1+\frac{2}{N} \right) = \tilde{\gamma}-\gamma -\frac{2}{N}\gamma <-1.
	\]
	Therefore, for $0<s<t/2$, we can use Lemma~\ref{lem. log integral} (i) with $a=\alpha$ and $q=\gamma p-\tilde{\gamma}>1$ to obtain
	\[
	\begin{aligned}
	\int_{0}^{t/2}I(t,s)ds&\lesssim t^{\alpha-1-\frac{\alpha N}{2}\left( 1-\frac{1}{p} \right)} \left( \LOG{t} \right)^{\frac{\tilde{\gamma}}{p}-\tilde{\gamma}}\int_{0}^{t/2}s^{-\alpha} \left( \LOG{s} \right)^{\tilde{\gamma}-\gamma p}ds\\
	&\lesssim  t^{-\frac{\alpha N}{2}\left( 1-\frac{1}{p} \right)} \left(\LOG{t} \right)^{\frac{\tilde{\gamma}}{p}-\gamma p+1} \log\left( e+2t \right)\\
	&\lesssim t^{-\frac{\alpha N}{2}\left( 1-\frac{1}{p} \right)} \left( \LOG{t} \right)^{\frac{\tilde{\gamma}}{p}-\gamma}\log\left( e+2t \right).
	\end{aligned}
	\]
	For $t/2\le s<t$, by Lemma~\ref{lem. log integral} (ii) with
	\[
	a=1-\alpha+ \frac{\alpha N}{2}\left( 1-\frac{1}{p} \right)\ \text{and}\ q=\tilde{\gamma}-\frac{\tilde{\gamma}}{p},
	\]
	it follows that
	\[
	\begin{aligned}
	\int_{t/2}^{t} I(t,s)ds &\lesssim t^{-\alpha} \left( \LOG{t} \right)^{\tilde{\gamma}-\gamma p}\int_{0}^{t/2} s^{\alpha -1 -\frac{\alpha N}{2}\left( 1-\frac{1}{p} \right)} \left( \LOG{s} \right)^{\frac{\tilde{\gamma}}{p}-\tilde{\gamma}}ds\\
	&\lesssim \alpha^{-1} t^{-\frac{\alpha N}{2}\left( 1-\frac{1}{p} \right)} \left(\LOG{t} \right)^{\frac{\tilde{\gamma}}{p}-\gamma p}\\
	&\lesssim \alpha^{-1} t^{-\frac{\alpha N}{2}\left( 1-\frac{1}{p} \right)} \left( \LOG{t} \right)^{\frac{\tilde{\gamma}}{p}-\gamma} \log\left( e+2t \right).
	\end{aligned}
	\]
	Thus, $\Phi(\mathcal{B}_{K,T})\subset \mathcal{B}_{K,T}$ is guaranteed provided
	\begin{equation}\label{eq. into ball1 ii}
	C \| \varphi\|_{1,\gamma;T^{\alpha/2}}+ C K^p \alpha^{-1} \log(e+2T)\le K.
	\end{equation}
	This is possible if we take 
	\begin{equation}\label{eq. into ball2 ii}
	K= c_3^* \alpha^{\frac{N}{2}} \left(\log(e+2T)\right)^{-\frac{N}{2}}
	\end{equation}
	and
	\begin{equation}\label{eq. into ball3 ii}
	\|\varphi\|_{1,\gamma;T^{\alpha/2}} = c_4^* K \le C \alpha^{\frac{N}{2}} \left(\log(e+2T)\right)^{-\frac{N}{2}}
	\end{equation}
	for sufficiently small $c_3^*,\ c_4^*>0$. Moreover, we can see that $\Phi : \mathcal{B}_{K,T}\to \mathcal{B}_{K,T}$ is contraction if
	\begin{equation}\label{eq. contraction ii}
	C K^{p-1} \alpha^{-1} \log \left( e+2T \right)\le \frac{1}{2}.
	\end{equation}
	By \eqref{eq. into ball1 ii}, \eqref{eq. into ball2 ii}, \eqref{eq. into ball3 ii}, and \eqref{eq. contraction ii}, we obtain the fixed-point $u\in \mathcal{B}_{K,T}$ of $\Phi$.
	
	\noindent\underline{\textbf{Step 2.}} It suffices to estimate $\| u(t)-P_{\alpha}(t)\varphi\|_{1,\gamma;T^{\alpha/2}}$. By \eqref{eq. step3.3}, we shall estimate the integral of $J(s,t)$. 
	
	For $0<s<t/2$, it follows that by Lemma~\ref{lem. log integral} (i),
	\begin{equation}
	\begin{aligned}
	\int_{0}^{t/2} J(s,t)dt & \lesssim \int_{0}^{t/2} s^{-\alpha} \left( \LOG{s} \right)^{\tilde{\gamma}-p\gamma}ds  t^{\alpha-1}\left( \LOG{t} \right)^{\gamma-\tilde{\gamma}}\\
	&\lesssim \log\left( e+2T \right).
	\end{aligned}
	\end{equation}
	For $0<s<t/2$, same as Step~3 in the proof of Theorem~\ref{theorem 1},
	\begin{equation}
	\begin{aligned}
	\int_{t/2}^{t} J(s,t)ds &\lesssim \alpha^{-1} \left( \LOG{T} \right)^{\gamma(1-p)}\\
	&\lesssim \alpha^{-1} \log\left( e+2T \right).
	\end{aligned}
	\end{equation}
	Therefore, by \eqref{eq. into ball1 ii},
	\begin{equation}
	\begin{aligned}
	\|u(t)-P_{\alpha}(t)\varphi\|_{1,\gamma;T^{\alpha/2}}&\lesssim \alpha^{-1}\log\left( e+2T \right)K^{p}\\
	&\lesssim \|\varphi\|_{1,\gamma;T^{\alpha/2}}.
	\end{aligned}
	\end{equation}
	Moreover, by \textit{not} using Lemma~\ref{lem. log integral} (i), we have generally \eqref{eq. step3 convergence}. Therefore, by \eqref{eq. step3 convergence}, \eqref{eq. into ball1 ii} and \eqref{eq. into ball2 ii},
	\begin{equation}
	\begin{aligned}
	\| u(t)-P_{\alpha}(t)\varphi \|_{1,\gamma;T^{\alpha/2}}&\lesssim \alpha^{-1}(1-\alpha)^{-1} \left( \LOG{t} \right)^{\gamma(1-p)} K^{p}\\
	&\lesssim \alpha^{\frac{N}{2}} (1-\alpha)^{-1} \left( \log\left( e+2T \right) \right)^{-\frac{N}{2}}\left( \LOG{t} \right)^{\gamma(1-p)},
	\end{aligned}
	\end{equation}
	whence follows \eqref{eq. convergence gamma N/2}.	
	\end{proof}

	\section{Life span estimates}\label{section. Life span}
	In this section, we apply Theorems~\ref{theorem 1} and \ref{theorem 2} to life span estimates for certain initial data. From now on, we only treat nonnegative initial data and solutions. We recall the following supersolution argument introduced by \cite{RobSier2013}. See also \cite{GMS22, LuchYama19}.
	
	\begin{prop}\label{prop. comparison}
		Suppose that there exists a nonnegative supersolution $v$ of \eqref{eq. TFF} on $(0,T)$. Then, there exists a nonnegative solution of \eqref{eq. TFF} $\overline{v}$ on $(0,T)$ such that
		\[
		0\le \overline{v}(t,x)\le v(t,x)
		\]
		for almost all $(t,x)\in (0,T)\times \mathbb{R}^N$. Furthermore, one can take $\overline{v}$ such that $0\le \overline{v}\le u$ for any solution $u$ of \eqref{eq. TFF}.
	\end{prop}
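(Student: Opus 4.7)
The plan is to obtain $\overline{v}$ as the limit of a monotone iteration, exploiting the positivity of the kernels $P_\alpha(t)$ and $S_\alpha(t)$. Set $u_0\equiv 0$ and define inductively
\[
u_{n+1}(t,x):=\bigl(P_\alpha(t)\varphi\bigr)(x)+\alpha\int_0^t(t-s)^{\alpha-1}\bigl(S_\alpha(t-s)\,u_n(s)^{p}\bigr)(x)\,ds.
\]
Because $G_t\ge 0$ and $h_\alpha(\theta)\ge 0$, the operators $P_\alpha(t)$ and $S_\alpha(t)$ preserve nonnegativity. Since the map $r\mapsto r^p$ is nondecreasing on $[0,\infty)$, a straightforward induction on $n$ yields the chain $0=u_0\le u_1\le u_2\le\cdots$, where the step $u_1\ge u_0$ uses $\varphi\ge 0$, and the step $u_{n+1}\ge u_n$ is immediate from the monotonicity of the integrand in the previous iterate.

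Next I would show by a parallel induction that $u_n\le v$ for every $n$. The base case is trivial. For the inductive step, one uses that $v$ satisfies the supersolution inequality
\[
v(t,x)\ge\bigl(P_\alpha(t)\varphi\bigr)(x)+\alpha\int_0^t(t-s)^{\alpha-1}\bigl(S_\alpha(t-s)\,v(s)^{p}\bigr)(x)\,ds,
\]
and then invokes $u_n\le v$ together with the monotonicity of $r\mapsto r^p$ and the positivity of $P_\alpha, S_\alpha$ to dominate the nonlinear term of $u_{n+1}$ by the corresponding term for $v$. Therefore $\{u_n\}$ is a pointwise nondecreasing sequence bounded above by $v$ almost everywhere, and consequently $u_n(t,x)\nearrow \overline{v}(t,x)$ for some measurable function $\overline{v}$ with $0\le\overline{v}\le v$ almost everywhere in $(0,T)\times\mathbb{R}^N$.

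It remains to pass to the limit in the integral equation to conclude that $\overline{v}$ is a solution. Since $u_n^p\nearrow \overline{v}^p$ and the kernel $(t-s)^{\alpha-1}G_{(t-s)^\alpha\theta}(x-y)\theta h_\alpha(\theta)$ is nonnegative, the monotone convergence theorem applied first to the spatial integral and the $\theta$-integral defining $S_\alpha$, and then to the $s$-integral, lets us exchange limit and integration. Combined with the termwise identity for $u_{n+1}$ and the observation that $\overline{v}\le v<\infty$ almost everywhere guarantees finiteness, this gives the integral equation for $\overline{v}$ with equality, i.e.\ $\overline{v}$ is a solution of \eqref{eq. TFF} on $(0,T)$. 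Finally, for the minimality claim, if $u$ is any solution of \eqref{eq. TFF}, then $u$ is in particular a supersolution, so the same induction with $u$ in place of $v$ gives $u_n\le u$ for all $n$, and the limit $\overline{v}\le u$.

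The main technical obstacle I anticipate is the interchange of limit and integration in the nonlocal term: because of the Wright-function factor and the singular time weight, one must be careful that the integral defining $S_\alpha(t-s)u_n(s)^p$ is well defined (possibly $+\infty$ a priori) so that monotone convergence applies cleanly. The upper bound by $v$ together with the hypothesis that $v$ satisfies the supersolution identity (with finite right-hand side a.e.) is exactly what supplies the required integrability and legitimizes the passage to the limit.
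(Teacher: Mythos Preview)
The paper does not actually prove this proposition; it merely recalls it with references to \cite{RobSier2013, GMS22, LuchYama19}. Your monotone iteration argument is precisely the standard method from those references and is correct as written.
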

	We call the solution $\overline{v}$ mentioned in Proposition~\ref{prop. comparison} the minimal solution of \eqref{eq. TFF}. The uniqueness of the minimal solution enables us to define the life span of the solution of problem \eqref{eq. TFF}. For each $0<\alpha<1$, set
	\begin{equation}
	T_{\alpha}\left[ \varphi \right] := \sup \left\{ T'>0; \text{there exists a minimal solution of \eqref{eq. TFF} on }(0,T'). \right\}.
	\end{equation}
	We call $T_{\alpha}[\varphi]\ge 0$ life span for initial data $\varphi$. Note that by Proposition~\ref{prop. comparison}, for $T>0$ appearing in Theorems~\ref{theorem 1} and \ref{theorem 2}, it follows that $T\le T_{\alpha}[\varphi]$.
	
	\subsection{Life span estimates for singular initial data}
	
	In this subsection, we shall apply Theorems~\ref{theorem 1} and \ref{theorem 2} to life span estimates for initial data
	\begin{equation}
	f_{\beta}(x):= |x|^{-N} \left( \LOG{|x|} \right)^{-\frac{N}{2}-1-\beta}\quad \mbox{for}\quad -\frac{N}{2}<\beta<\infty.
	\end{equation}
	A similar topic for the heat equation with a nonlinear boundary condition is investigated in \cite{Hisa21}. In order to use Theorems~\ref{theorem 1} and \ref{theorem 2}, we need to calculate the weak Zygmund type norm of $f_{\beta}$.
	
	\begin{prop}\label{prop. fbeta}
		For $-\frac{N}{2}<\beta<\infty$, it follows that
		\[
		f_{\beta}\in \unifz{1}{\frac{N}{2}+\beta}.
		\]
		Moreover, its norm is evaluated as follows:
		\begin{equation}\label{eq. norm N2 beta of fbeta}
		\left\| f_{\beta} \right\|_{1,\frac{N}{2}+\beta;\rho} \le C\log\left( e+ 2\rho \right).
		\end{equation}
	\end{prop}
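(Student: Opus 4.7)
The plan is to apply the rearrangement-based bound \eqref{eq. zygmund norm for cut off} with $E = B(z;\rho)$, which removes the dependence on $z$ and reduces the problem to showing
\[
\sup_{0<s<\omega_N\rho^N} \left(\LOG{s}\right)^{N/2+\beta} \int_0^s f_\beta^*(\tau)\,d\tau \;\le\; C \log(e+2\rho).
\]
I would then compute the rearrangement explicitly: since $f_\beta$ is radial and strictly decreasing in $|x|$, its superlevel sets are balls centered at the origin, so $f_\beta^*(\tau) = (\tau/\omega_N)^{-1}\bigl[\log\bigl(e+(\omega_N/\tau)^{1/N}\bigr)\bigr]^{-N/2-1-\beta}$. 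Applying the equivalences in \eqref{eq. equivalence of LOG}, this simplifies up to constants to $f_\beta^*(\tau) \simeq \tau^{-1}\bigl(\LOG{\tau}\bigr)^{-N/2-1-\beta}$.

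To evaluate $\int_0^s f_\beta^*(\tau)\,d\tau$ I would split at $\tau = 1$. For the small-$\tau$ regime, the exact antiderivative identity
\[
\frac{d}{d\tau}\Big(\LOG{\tau}\Big)^{-N/2-\beta} = \frac{N/2+\beta}{\tau(e\tau+1)}\Big(\LOG{\tau}\Big)^{-N/2-1-\beta},
\]
combined with the elementary bound $1/\tau \le (e+1)/(\tau(e\tau+1))$ valid for $\tau \le 1$, yields $\int_0^{\min(s,1)} f_\beta^*(\tau)\,d\tau \lesssim (\LOG{\min(s,1)})^{-N/2-\beta}$. The hypothesis $N/2+\beta > 0$ is precisely what makes the boundary term at $\tau = 0$ vanish. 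In the large-$\tau$ regime, the logarithm factor is pinched between absolute constants, so $\int_1^s f_\beta^*(\tau)\,d\tau \simeq \log s$ when $s > 1$.

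Finally I would combine the two pieces and take the supremum. The contribution from $s \le 1$, multiplied by $(\LOG{s})^{N/2+\beta}$, is uniformly bounded by a constant, which is harmless since $\log(e+2\rho) \ge 1$. The contribution from $s > 1$ (arising only when $\omega_N\rho^N > 1$, i.e.\ for sufficiently large $\rho$) is controlled by $C\log s \le C\log(\omega_N\rho^N)$, because $(\LOG{s})^{N/2+\beta}$ is bounded for $s \ge 1$; standard logarithm manipulations then give $\log(e+\omega_N\rho^N) \lesssim \log(e+2\rho)$, closing the bound. I do not anticipate a deep obstacle; the principal issue is the bookkeeping across the two regimes $s \le 1$ and $s > 1$ and keeping track of the constants in the logarithm equivalences.
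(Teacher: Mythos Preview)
Your approach is correct and matches the paper's: reduce via \eqref{eq. zygmund norm for cut off}, use $f_\beta^*(\tau)\lesssim \tau^{-1}\bigl(\LOG{\tau}\bigr)^{-N/2-1-\beta}$, integrate, and take the supremum over $0<s<\omega_N\rho^N$. The paper packages your integral computation into a single appeal to Lemma~\ref{lem. log integral}\,(i) with $a=1$ and $q=N/2+1+\beta>1$, which gives $\int_0^s \tau^{-1}\bigl(\LOG{\tau}\bigr)^{-q}\,d\tau \lesssim \log(e+2s)\bigl(\LOG{s}\bigr)^{-q+1}$ in one stroke; your split at $\tau=1$ together with the antiderivative identity is essentially the proof of that lemma in the borderline case $a=1$.

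One minor caution: $f_\beta$ is \emph{not} strictly radially decreasing for all $\beta$ (for $\beta$ large the logarithmic factor destroys monotonicity on a bounded range of $|x|$), so your exact formula for $f_\beta^*$ is not literally valid in general. The upper bound you actually use is still true: by \eqref{eq. equivalence of LOG} one has $f_\beta(x)\le C|x|^{-N}\bigl(\log(L+1/|x|)\bigr)^{-N/2-1-\beta}$ for any $L>1$, and choosing $L$ large enough (depending on $N,\beta$) makes the right-hand side genuinely radially decreasing, after which your rearrangement computation goes through up to constants. The paper simply asserts the bound on $f_\beta^*$ without elaborating on this point.
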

	
	\begin{proof}
		By \eqref{eq. zygmund norm for cut off}, it holds that
		\[
		\| f_{\beta}\|_{1,\frac{N}{2}+\beta;\rho}
		\le \sup_{0<s<\omega_N\rho^{N}} \left[ \left( \LOG{s} \right)^{\frac{N}{2}+\beta} \int_{0}^{\min\left( s, \omega_N \rho^{N} \right)} f^{*}_{\beta}(\tau)d\tau \right].
		\]
		We easily check that there exists $C=C(N,\beta)$ such that
		\[
		f^{*}_{\beta}(\tau)\le C \tau^{-1} \left( \LOG{\tau} \right)^{-\frac{N}{2}-1-\beta}.
		\]
		Therefore, Lemma~\ref{lem. log integral} (i) with $a=-1$ and $q=1+\beta+N/2$ gives
		\[
		\begin{aligned}
		\| f_{\beta}\|_{1,\frac{N}{2}+\beta;\rho}&\le \sup_{0<s<\omega_N\rho^{N}} \left[ \left( \LOG{s} \right)^{\frac{N}{2}+\beta} \int_{0}^{\min\left( s, \omega_N \rho^{N} \right)} f^{*}_{\beta}(\tau)d\tau \right]\\
		&\lesssim \sup_{0<s<\omega_N\rho^{N}} \left[ \left( \LOG{s} \right)^{\frac{N}{2}+\beta} \log\left( e+ 2s \right) \left( \LOG{s} \right)^{-\frac{N}{2}-\beta} \right]\\
		&\lesssim \log\left( e+2\rho \right).
		\end{aligned}
		\]	
	\end{proof}
	
	In general, for $0\le \gamma_1\le \gamma_2$, it holds that
	\[
	\| f\|_{1,\gamma_1 ;\rho}\le C \left( \LOG{\rho} \right)^{\gamma_1-\gamma_2} \|f\|_{1,\gamma_2;\rho}.
	\]
	Thus, we obtain
	\begin{equation}\label{eq. norm N2 of fbeta}
	\|f_{\beta}\|_{1,\frac{N}{2};\rho}\lesssim \left( \LOG{\rho} \right)^{-\beta} \log\left(e+2\rho\right).
	\end{equation}
	
	For the use of \eqref{eq. Hisa-Kojima necesarry}, the following is convenient:
	\begin{equation}\label{eq. Hisa-Kojima necesarry log ver.}
	\begin{aligned}
	\sup_{z\in\mathbb{R}^N}  \int_{B(z;\sigma)} \varphi(x)\,dx &\le \gamma_1 \left(\int_{\sigma^{2/\alpha}/(16T)}^{1/4} t^{-\alpha} \, dt \right)^{-\frac{N}{2}}\\
	&= \gamma_1 \left(\int_{\sigma^{2/\alpha}/(16T)}^{1/4} t^{1-\alpha} t^{-1} \, dt \right)^{-\frac{N}{2}}\\
	&\le \gamma_1 \left( \frac{T}{\sigma^{2/\alpha}} \right)^{\frac{N}{2}(1-\alpha)} \left( \log\left( e+ \frac{T}{\sigma^{2/\alpha}} \right) \right)^{-\frac{N}{2}},
	\end{aligned}
	\end{equation}
	for all $0<\sigma < T^{\alpha/2}$.
	
	Observations in this subsection are divided into the following three cases:
	\begin{itemize}
		\item $\beta>0$ (Solvable even when $\alpha \nearrow 1$),
		
		\item $\beta=0$ (Threshold),
		
		\item $-\frac{N}{2}<\beta<0$ (Unsolvable when $\alpha\nearrow 1$).
	\end{itemize}
	
	In what follows, we are in particular interested in the life span estimate as $\alpha \nearrow 1$, so we shall use $C=C(\alpha)$ to denote generic positive constants which may depend on $\alpha\in (0,1)$ and
	\[
	\limsup_{\alpha \nearrow1 } C(\alpha) \in (0,\infty).
	\]
	
	Firstly, we shall observe the behavior of $T_{\alpha} [\kappa f_{\beta}]$ for $\beta>0$ and parameter $\kappa>0$. In both cases where $\kappa>0$ is large and small, optimal estimates with respect to $\kappa>0$ is deduced. Furthermore, as $\alpha \nearrow 1$, these estimates for $T_{\alpha}[\kappa f_{\beta}]$ with $0<\alpha<1$ correspond to the optimal life span estimates for the classical problem \eqref{eq. Fujita} with $\varphi = \kappa f_{\beta}$. See Remarks~\ref{rem. beta positive: kappa large} and \ref{rem. beta positive: kappa small} for details.

	\begin{theo}\label{theo. beta positive: kappa large}
		Suppose that $\beta>0$. Let $\kappa >0$ be sufficiently large. Then, there exist constants $C_1, C_2, D_1, D_2>0$ such that,
		\begin{equation}
		\begin{aligned}
		&\max\left( \exp\left( -C_1(\alpha) \kappa^{{\frac{2}{N+2\beta}}} \right),\ \exp\left( -D_1 \kappa^{\frac{1}{\beta}} \right) \right)\\
		& \le T_{\alpha}[\kappa f_{\beta}]\\
		&\le \min\left( \exp\left( - C_2 \kappa^{\frac{2}{N+2\beta}} \right),\ E(\kappa ,\alpha)\exp\left( -D_2 \kappa^{\frac{1}{\beta}} \right) \right).
		\end{aligned}
		\end{equation}
		Here,
		\[
		\begin{aligned}
		C_1(\alpha)&=C (1-\alpha)^{-\frac{N}{N+2\beta}},\\
		E(\kappa, \alpha)&= C\left( 4^{\alpha -1}- C(1-\alpha)
		\kappa^{\frac{1}{\beta}} \right)_{+}^{-\frac{1}{1-\alpha}}.
		\end{aligned}
		\]
		In particular, when $\alpha \nearrow 1$,
		\begin{equation}
		\exp\left( -D_1 \kappa^{\frac{1}{\beta}} \right)\le \liminf_{\alpha \nearrow 1}T_{\alpha}[\kappa f_{\beta}]\le \limsup_{\alpha\nearrow 1} T_{\alpha}[\kappa f_{\beta}]\le \exp\left( -D_2 \kappa^{\frac{1}{\beta}} \right)
		\end{equation}
		for large $\kappa>0$.
	\end{theo}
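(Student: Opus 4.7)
The plan is to obtain the lower bounds on $T_\alpha[\kappa f_\beta]$ from the sufficient conditions in Theorems~\ref{theorem 1} and \ref{theorem 2}, and the upper bounds from the necessary condition \eqref{eq. Hisa-Kojima necesarry} in its logarithmic form \eqref{eq. Hisa-Kojima necesarry log ver.}. Since $\beta>0$, Proposition~\ref{prop. fbeta} guarantees $f_\beta\in \unifz{1}{N/2+\beta}\subset \unifz{1}{N/2}$, so both theorems are available.

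For the first lower bound, I apply Theorem~\ref{theorem 1} with $\gamma=N/2+\beta$ and $\varphi=\kappa f_\beta$. By \eqref{eq. norm N2 beta of fbeta}, $\|\kappa f_\beta\|_{1,N/2+\beta;T^{\alpha/2}}\lesssim \kappa\log(e+2T^{\alpha/2})$, which is $O(\kappa)$ for $T\le 1$. The sufficient condition \eqref{eq. main vanish} becomes $\kappa \lesssim \alpha^{N/2}(1-\alpha)^{N/2}(\log(e+1/T))^{N/2+\beta}$, which when solved for $T$ yields the bound $T\le \exp(-C_1(\alpha)\kappa^{2/(N+2\beta)})$ with $C_1(\alpha)\simeq (1-\alpha)^{-N/(N+2\beta)}$. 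For the second lower bound, I apply Theorem~\ref{theorem 2} using \eqref{eq. norm N2 of fbeta}, giving the condition $\kappa\lesssim \alpha^{N/2}(\log(e+T^{-\alpha/2}))^\beta$ (the $\log(e+2T)$ factor being harmless for small $T$), which solves to $T\le \exp(-D_1\kappa^{1/\beta})$ uniformly in $\alpha$ bounded away from $0$; the max of the two captures the optimal range.

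For the upper bounds, a direct computation in polar coordinates with the substitution $u=\log(e+1/r)$ gives $\int_{B(0;\sigma)}f_\beta(x)\,dx\simeq c_{N,\beta}(\log(e+1/\sigma))^{-N/2-\beta}$ for small $\sigma$. Plugging this into \eqref{eq. Hisa-Kojima necesarry log ver.} with the natural choice $\sigma=T^{\alpha/2}$ collapses the right-hand side to a universal constant (the prefactor $(T/\sigma^{2/\alpha})^{N(1-\alpha)/2}$ becomes $1$, the logarithm becomes $\log(e+1)$), which immediately produces $T\le \exp(-C_2\kappa^{2/(N+2\beta)})$. For the refined bound, I return to the sharper form $\sup_z\int_{B(z;\sigma)}\varphi\le \gamma_1\bigl(\int_{\sigma^{2/\alpha}/(16T)}^{1/4} t^{-\alpha}\,dt\bigr)^{-N/2}$, evaluating the integral explicitly as $(1-\alpha)^{-1}(4^{\alpha-1}-(\sigma^{2/\alpha}/(16T))^{1-\alpha})$. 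Substituting the asymptotic $\log(e+1/\sigma)\simeq(1/\alpha)\log(1/(T t_0))$ with $t_0:=\sigma^{2/\alpha}/(16T)$ and optimizing in $t_0\in(0,1/4)$ — balancing the $t_0^{1-\alpha}$ term against $4^{\alpha-1}$ — yields the upper bound in the peculiar form $E(\kappa,\alpha)\exp(-D_2\kappa^{1/\beta})$. The asymptotic identity $(1-(1-\alpha)A)^{-1/(1-\alpha)}\to e^{A}$ as $\alpha\nearrow 1$ is what makes this structure converge to $e^{C}\exp(-D_2\kappa^{1/\beta})$ in the limit.

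The main obstacle is the refined upper bound and the extraction of $E(\kappa,\alpha)$. The naive choice $\sigma=T^{\alpha/2}$ gives only the $\kappa^{2/(N+2\beta)}$ rate, which is not asymptotically sharp as $\alpha\nearrow 1$, and recovering the genuinely sharper $\kappa^{1/\beta}$ rate requires picking $\sigma$ substantially smaller, thereby reactivating the polynomial factor $(T/\sigma^{2/\alpha})^{N(1-\alpha)/2}$. Tracking how this factor multiplied by the logarithmic correction interpolates between the two regimes — while keeping the constants independent of $\alpha$ in the limit — is the delicate calculation; the form $(4^{\alpha-1}-C(1-\alpha)\kappa^{1/\beta})_+^{-1/(1-\alpha)}$ emerges precisely because the optimization constraint is $t_0^{1-\alpha}\le 4^{\alpha-1}-C(1-\alpha)\kappa^{1/\beta}$, which both defines the admissible range of $\sigma$ and yields the stated blow-up structure.
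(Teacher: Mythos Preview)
Your approach matches the paper's: lower bounds via Theorems~\ref{theorem 1} and \ref{theorem 2} together with Proposition~\ref{prop. fbeta}, and upper bounds via the necessary condition \eqref{eq. Hisa-Kojima necesarry} with two choices of $\sigma$ (one comparable to $T^{\alpha/2}$, one at the scale $\exp(-D\kappa^{1/\beta})$). Two points deserve sharpening. First, for the refined upper bound the paper does not optimize in $t_0$ but simply plugs in the fixed choice $\sigma^{2/\alpha}=\tfrac12\exp(-D\kappa^{1/\beta})$ with $D>D_1$ and solves for $T_\alpha$; the condition $D>D_1$, together with the already-established lower bound $T_\alpha\ge\exp(-D_1\kappa^{1/\beta})$, is precisely what guarantees the admissibility constraint $\sigma<T^{\alpha/2}$ required in \eqref{eq. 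Hisa-Kojima necesarry}, and your optimization framing leaves this coupling between the lower and upper arguments implicit. Second, your stated ``optimization constraint'' $t_0^{1-\alpha}\le 4^{\alpha-1}-C(1-\alpha)\kappa^{1/\beta}$ has the inequality reversed: once $\sigma$ is fixed at the $\exp(-D\kappa^{1/\beta})$ scale, the necessary condition forces $t_0^{1-\alpha}\ge 4^{\alpha-1}-C(1-\alpha)\kappa^{1/\beta}$, and it is this \emph{lower} bound on $t_0^{1-\alpha}=(\sigma^{2/\alpha}/(16T))^{1-\alpha}$ that translates into the upper bound $T_\alpha\le E(\kappa,\alpha)\exp(-D_2\kappa^{1/\beta})$.
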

	
	\begin{proof}
		Firstly we deduce the lower estimates. By Theorem~\ref{theorem 1} and \eqref{eq. norm N2 beta of fbeta}, if
		\begin{equation}\label{eq. Prf of Thm3 1}
		\log\left( e+2T \right) \left( \LOG{T} \right)^{-\frac{N}{2}-\beta}\le C\kappa^{-1}(1-\alpha)^{\frac{N}{2}},
		\end{equation}
		then the solvability on $(0,T)$ with $u(0)=\kappa f_{\beta}$ is guaranteed. For appropriate $C_1'>0$ and large $\kappa>0$, it is verified that
		\[
		T= \exp\left( -C_1' (1-\alpha)^{-\frac{N}{N+2\beta}} \kappa^{\kappa^{\frac{2}{N+2\beta}}} \right)
		\]
		satisfies \eqref{eq. Prf of Thm3 1}. Moreover, by Theorem~\ref{theorem 2} and \eqref{eq. norm N2 of fbeta}, 
		\begin{equation}\label{eq. Prf of Thm3 2}
		\left( \log\left( e+2T \right) \right)^{1+\frac{N}{2}} \left( \LOG{T} \right)^{-\beta}\le C\kappa^{-1}
		\end{equation}
		gives the solvability on $(0,T)$. We can check that
		\[
		T= \exp\left( -D_1 \kappa^{\frac{1}{\beta}} \right)
		\]
		fulfills \eqref{eq. Prf of Thm3 2} for appropriate $D_1>0$ and sufficiently large $\kappa>0$. Therefore, the desired lower estimates are proved.
		
		Next, we show the upper estimates. Using \eqref{eq. Hisa-Kojima necesarry} with $T=T_{\alpha}[\kappa f_{\beta}]/2$, $\sigma^{2/\alpha}=T/2$ and
		\begin{equation}\label{eq. lower estimate of int of fbeta}
		\begin{aligned}
		\int_{B(0;\rho)} f_{\beta}(x)\,dx &=C \int_{\LOG{\rho}}^{\infty} \frac{e^{\tau}}{e^{\tau}-e}\tau^{-\frac{N}{2}-1-\beta}d\tau\\
		&\ge C \left( \LOG{\rho} \right)^{-\frac{N}{2}-\beta},
		\end{aligned}
		\end{equation}
		we deduce that
		\begin{equation}
		\left( \LOG{T} \right)^{-\frac{N}{2}-\beta}\le C \kappa^{-1}.
		\end{equation}
		This estimate yields that
		\[
		T_{\alpha}[\kappa f_{\beta}]\le \exp\left( -C_2 \kappa^{\frac{2}{N+2\beta}} \right)
		\]
		for appropriate $C_2>0$. Furthermore, \eqref{eq. Hisa-Kojima necesarry} with
		\[
		T=\frac{T_{\alpha}[\kappa f_{\beta}]}{2},\ \ \sigma^{2/\alpha}=\frac{1}{2} \exp\left( -D \kappa^{\frac{1}{\beta}} \right)
		\]
		for $D>D_1$ and \eqref{eq. lower estimate of int of fbeta} provides that
		\[
		\begin{aligned}
		&\frac{1}{1-\alpha}\left[ 4^{\alpha-1}- 16^{\alpha-1} T_{\alpha}[\kappa f_{\beta}]^{\alpha-1} \exp\left( -\frac{\alpha D}{2}(1-\alpha)\kappa^{\frac{1}{\beta}} \right)  \right]\\
		&= \int_{\frac{\sigma^{2/\alpha}}{16T}}^{\frac{1}{4}} t^{-\alpha}dt\le C \kappa^{-\frac{2}{N}} \left( \LOG{\sigma} \right)^{1+\frac{2\beta}{N}}\le C \kappa^{\frac{1}{\beta}}.
		\end{aligned}
		\]
		Therefore,
		\[
		T_{\alpha}[\kappa f_{\beta}]\le C \left[ 4^{\alpha -1} -C(1-\alpha) \kappa^{\frac{1}{\beta}} \right]^{-\frac{1}{1-\alpha}} e^{-\frac{\alpha D}{2}\kappa^{\frac{1}{\beta}}}
		\]
		provided $4^{\alpha-1}-C(1-\alpha)\kappa^{\frac{1}{\beta}}>0$.
	\end{proof}
	
	\begin{rem}\label{rem. beta positive: kappa large}
		{\rm 
			Theorem~\ref{theo. beta positive: kappa large} states that the optimal decay rate of $T_{\alpha}[\kappa f_{\beta}]$ as $\kappa \to \infty$ is $\exp\left( - C \kappa^{\frac{2}{N+2\beta}} \right)$ provided $0<\alpha<1$, although formally when $\alpha= 1$, the optimal rate turns into $\exp\left(-C \kappa^{\frac{1}{\beta}}\right)$. Note that $ \frac{2}{N+2\beta}<\frac{1}{\beta} $. Such a \textit{discontinuous} change of the optimal rate represents the significant difference between the time-fractional problem \eqref{eq. TFF} and the classical one \eqref{eq. Fujita}.
		}
	\end{rem}
	
	\begin{theo}\label{theo. beta positive: kappa small}
		Suppose that $\beta>0$. Let $\kappa>0$ be sufficiently small. Then, there exist $C_1, C_2, D_1, D_2>0$ such that,
		\begin{equation}
		\begin{aligned}
		&\max\left( \exp\left( C_1(\alpha) \kappa^{-1} \right),\ \exp\left( D_1\kappa^{-\frac{2}{N+2}} \right) \right)\\
		&\le T_{\alpha}[\kappa f_{\beta}]\\
		&\le \min \left( \exp\left( C_2 \kappa^{-1} \right),\ \ E(\kappa, \alpha)\exp\left( D_2 \kappa^{-\frac{2}{N+2}} \right) \right).
		\end{aligned}
		\end{equation}
		Here,
		\[
		\begin{aligned}
		C_1(\alpha)&=C(1-\alpha)^{\frac{N}{2}},\\
		E(\kappa, \alpha)&=C\left(4^{\alpha-1}-C(1-\alpha)\kappa^{-\frac{2}{N+2}}\right)_+^{-\frac{1}{1-\alpha}}.
		\end{aligned}
		\]
		In particular, when $\alpha \nearrow 1$,
		\[
		\exp\left( D_1 \kappa^{-\frac{2}{N+2}} \right) \le \liminf_{\alpha \nearrow 1} T_{\alpha}[\kappa f_{\beta}]\le \limsup_{\alpha\nearrow1} T_{\alpha}[\kappa f_{\beta}]\le \exp\left( D_2 \kappa^{-\frac{2}{N+2}} \right)
		\]
		for small $\kappa>0$.
	\end{theo}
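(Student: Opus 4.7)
The argument will parallel the proof of Theorem~\ref{theo. beta positive: kappa large}, but now in the \emph{large $T$} (small $\kappa$) regime. The two lower bounds come from applying Theorems~\ref{theorem 1} (with $\gamma = N/2+\beta$) and \ref{theorem 2} ($\gamma = N/2$) to $\varphi = \kappa f_\beta$; the two upper bounds come from the necessary condition \eqref{eq. Hisa-Kojima necesarry} with suitable choices of $\sigma$.

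For the lower bounds, I would use the norm estimates $\|f_\beta\|_{1,N/2+\beta;\rho} \lesssim \log(e+2\rho)$ from Proposition~\ref{prop. fbeta} and $\|f_\beta\|_{1,N/2;\rho} \lesssim (\LOG{\rho})^{-\beta}\log(e+2\rho)$ from \eqref{eq. norm N2 of fbeta}. For large $T$, the quantity $\LOG{T}$ (and $\LOG{T^{\alpha/2}}$) tends to $1$, while $\log(e+2T^{\alpha/2}) \simeq \alpha\log T$. Thus Theorem~\ref{theorem 1} yields solvability whenever $\kappa \log T \lesssim (1-\alpha)^{N/2}$, giving $T \le \exp(C(1-\alpha)^{N/2}\kappa^{-1})$; and Theorem~\ref{theorem 2} yields solvability whenever $\kappa \log T \lesssim (\log T)^{-N/2}$, i.e.\ $\log T \lesssim \kappa^{-2/(N+2)}$.

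For the upper bounds, the crucial new ingredient, beyond \eqref{eq. lower estimate of int of fbeta} (which only handles small $\sigma$), is a lower bound of the form $\int_{B(0;R)} f_\beta(x)\,dx \gtrsim \log R$ for \emph{large} $R$; this follows because $\LOG{|x|} \simeq 1$ for $|x| \ge 1$, so $f_\beta(x) \simeq |x|^{-N}$ there and a polar-coordinate computation gives the estimate. With this in hand, for the first upper bound I take $\sigma = T^{\alpha/2}$: then $\sigma^{2/\alpha}/(16T) = 1/16$, the integral in \eqref{eq. Hisa-Kojima necesarry} is bounded independently of $\alpha$, and the necessary condition forces $\kappa \log T \lesssim 1$, i.e.\ $T \le \exp(C\kappa^{-1})$. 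For the second upper bound, I take $\sigma^{2/\alpha} = \tfrac{1}{2}\exp(D\kappa^{-2/(N+2)})$ for some fixed $D > 0$ (if $T_\alpha[\kappa f_\beta] \le \exp(D\kappa^{-2/(N+2)})$, the bound is immediate; otherwise $\sigma^{2/\alpha} < T_\alpha[\kappa f_\beta]/2 = T$, so \eqref{eq. Hisa-Kojima necesarry} applies). The large-ball estimate then gives $\int_{B(0;\sigma)} f_\beta \gtrsim \kappa^{-2/(N+2)}$, so that \eqref{eq. Hisa-Kojima necesarry} becomes
\begin{equation}
\frac{1}{1-\alpha}\Bigl[ 4^{\alpha-1} - 16^{\alpha-1} T_\alpha[\kappa f_\beta]^{\alpha-1}\exp\bigl( D(1-\alpha)\kappa^{-2/(N+2)}\bigr)\Bigr] \lesssim \kappa^{-2/(N+2)},
\end{equation}
which rearranges to $T_\alpha[\kappa f_\beta] \le C\bigl[4^{\alpha-1} - C(1-\alpha)\kappa^{-2/(N+2)}\bigr]_+^{-1/(1-\alpha)}\exp(D\kappa^{-2/(N+2)})$, i.e.\ the claimed $E(\kappa,\alpha)\exp(D_2\kappa^{-2/(N+2)})$ with $D_2 = D$.

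The main obstacle is the asymptotic lower bound $\int_{B(0;R)} f_\beta \,dx \gtrsim \log R$ for large $R$, which is qualitatively new compared with the large-$\kappa$ setting of Theorem~\ref{theo. beta positive: kappa large} (where only the singularity of $f_\beta$ at the origin was relevant), together with the careful bookkeeping of $\alpha$-dependence in the rearrangement of \eqref{eq. Hisa-Kojima necesarry}, so that the factor $\bigl[4^{\alpha-1} - C(1-\alpha)\kappa^{-2/(N+2)}\bigr]_+^{-1/(1-\alpha)}$ appears cleanly. The limit statement as $\alpha \nearrow 1$ then follows immediately from the finite-$\alpha$ estimates, since $C_1(\alpha) \to 0$ renders the first lower bound trivial and $E(\kappa,\alpha) \to C\exp(C\kappa^{-2/(N+2)})$ absorbs harmlessly into the exponential factor of the second upper bound.
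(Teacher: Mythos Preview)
Your proposal is correct and follows essentially the same route as the paper: lower bounds via Theorems~\ref{theorem 1} and~\ref{theorem 2} combined with the norm estimates \eqref{eq. norm N2 beta of fbeta} and \eqref{eq. norm N2 of fbeta}, and upper bounds via \eqref{eq. Hisa-Kojima necesarry} with the two choices $\sigma^{2/\alpha}\sim T$ and $\sigma^{2/\alpha}=\tfrac{1}{2}\exp(D\kappa^{-2/(N+2)})$, the key new input being the large-ball lower bound $\int_{B(0;R)}f_\beta\,dx\gtrsim\log R$ for $R>1$. Two cosmetic points: the necessary condition \eqref{eq. Hisa-Kojima necesarry} is stated for $\sigma\in(0,T^{\alpha/2})$, so take $\sigma^{2/\alpha}=T/2$ rather than $\sigma=T^{\alpha/2}$ exactly (as the paper does); and the paper avoids your case split in the second upper bound by simply requiring $D<D_1$, which, thanks to the already-established lower bound, forces $\sigma^{2/\alpha}<T$ automatically.
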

	
	\begin{proof}
		We first check the lower estimates. By Theorem~\ref{theorem 1} and \eqref{eq. norm N2 beta of fbeta}, it suffices to prove that $T=\exp\left( C_1'(1-\alpha)^{\frac{N}{2}} \kappa^{-1} \right)$ satisfies
		\begin{equation}
		\log\left( e+2T \right)\le C\kappa^{-1}(1-\alpha)^{\frac{N}{2}} \left( \LOG{T} \right)^{\frac{N}{2}+\beta}
		\end{equation}
		for appropriate $C_1'>0$ and sufficiently small $\kappa>0$. This estimate can be verified by simple calculation. Moreover, by using Theorem~\ref{theorem 2} and \eqref{eq. norm N2 of fbeta}, it suffices to check that $T=\exp\left( D_1 \kappa^{-\frac{2}{N+2}} \right)$ fulfills
		\begin{equation}
		\left( \log\left( e+2T \right) \right)^{1+\frac{N}{2}}\le C\kappa^{-1} \left( \LOG{T} \right)^{\beta}
		\end{equation}
		for some $D_1>0$ and small $\kappa>0$. This is also proved easily.
		
		Next, we check the upper estimates. By \eqref{eq. Hisa-Kojima necesarry} with $T=T_{\alpha}[\kappa f_{\beta}]/2$ and $\sigma^{2/\alpha}=T/2$, it holds that
		\begin{equation}
		\int_{B\left( 0; \left(T/2\right)^{\alpha/2} \right)} f_{\beta}(x)\,dx\le C \kappa^{-1}.
		\end{equation}
		Note that for $\rho>1$
		\[
		\begin{aligned}
		\int_{B(0;\rho)} f_{\beta}(x)\,dx &\ge C \int_{1/2}^{\rho} r^{-1} \left( \LOG{r} \right)^{-\frac{N}{2}-\beta -1}dr\\
		&\ge \left( \LOG{2^{-1}} \right)^{-\frac{N}{2}-\beta-1} \log \rho\\
		&=C\log \rho.
		\end{aligned}
		\]
		Therefore, for $\kappa>0$ small enough so that $T_{\alpha}[\kappa f_{\beta}]>1$, it follows that $T_{\alpha}[\kappa f_{\beta}]\le \exp\left( C_2\kappa^{-1} \right)$. Moreover, \eqref{eq. Hisa-Kojima necesarry} with
		\[
		T=\frac{T_{\alpha}[\kappa f_{\beta}]}{2},\ \ \sigma^{2/\alpha}=\frac{1}{2}\exp\left( D\kappa^{-\frac{2}{N+2}} \right)
		\]
		for $D<D_1$ gives
		\begin{equation}
		\begin{aligned}
		&\frac{1}{1-\alpha}\left[ 4^{\alpha-1} -16^{\alpha-1} T_{\alpha}[\kappa f_{\beta}]^{\alpha-1} \exp\left( \frac{\alpha D}{2}(1-\alpha)\kappa^{-\frac{2}{N+2}} \right) \right]\\
		&\le C \kappa^{-\frac{2}{N}} \left(\log \sigma \right)^{-\frac{2}{N}}\le C \kappa^{-\frac{2}{N+2}}.
		\end{aligned}
		\end{equation}
		This inequality deduces the desired estimate.
	\end{proof}

	\begin{rem}\label{rem. beta positive: kappa small}
		{\rm 
			Same as Theorem~\ref{theo. beta positive: kappa small}, the \textit{discontinuous} change of the optimal rate with respect to $\kappa\to 0$ is observed in Theorem~\ref{theo. beta positive: kappa large}. Indeed, the optimal decay rate of $T_{\alpha}[\kappa f_{\beta}]$ as $\kappa \to 0$ is $\exp\left( - C \kappa^{-1} \right)$ for $0<\alpha<1$, while when $\alpha= 1$, the optimal rate is $\exp\left(-C \kappa^{\frac{2}{N+2}}\right)$.
		}
	\end{rem}
	
	Next, we investigate the life span estimate for $f_{\beta}$ with $\beta=0$, which represents the threshold singularity for the solvability of \eqref{eq. Fujita}. Indeed, there exists $\tilde{\kappa}>0$ such that:
	\begin{itemize}
		\item If $\kappa<\tilde{\kappa}$, then \eqref{eq. Fujita} with $\varphi=\kappa f_0$ possesses a nonnegative local-in-time solution;
		\item If $\kappa >\tilde{\kappa}$, then \eqref{eq. Fujita} with $\varphi=\kappa f_{0}$ possesses no nonnegative local-in-time solutions.
	\end{itemize}
	See \cite[Corollary~1.2]{HisaIshige18}. Although for the time-fractional problem \eqref{eq. TFF}, it holds that $T_{\alpha}[\kappa f_0]>0$ for any $\kappa>0$, the singularity $f_0$ can be also interpreted as the threshold whether $T_{\alpha}[\kappa f_0]$ decays to zero as $\alpha\nearrow 1$.
	
	\begin{theo}\label{theo. beta zero}
		Suppose that $\beta=0$. Then, there exists $\kappa^*>0$ such that,
		\begin{itemize}
			\item If $\kappa<\kappa^*$, then
			\begin{equation}\label{eq. threshold liminf}
			\liminf_{\alpha \nearrow 1} T_{\alpha}\left[ \kappa f_{0} \right]>0;
			\end{equation}
			
			\item If $\kappa>\kappa^{*}$, then 
			\begin{equation}\label{eq. threshold lim}
			\lim_{\alpha \nearrow 1} T_{\alpha}\left[ \kappa f_{0} \right] =0.
			\end{equation}
		\end{itemize}
		Furthermore, for sufficiently large $\kappa>\kappa^{*}$, it follows that
		\begin{equation}\label{eq. estimate for alpha threshold}
		\log T_{\alpha}\left[ \kappa f_0 \right]\simeq -\kappa^{\frac{2}{N}}(1-\alpha)^{-1}
		\end{equation}
		near $\alpha=1$.
	\end{theo}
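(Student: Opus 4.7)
The plan is to combine the monotonicity of $T_\alpha[\kappa f_0]$ in $\kappa$ (Proposition~\ref{prop. comparison}) with the sufficient conditions of Theorems~\ref{theorem 1} and~\ref{theorem 2} (to produce lower bounds on $T_\alpha$) and with the necessary condition \eqref{eq. Hisa-Kojima necesarry} (to produce upper bounds). The key quantitative inputs for $f_0$ are the norm estimate $\|f_0\|_{1,N/2;\rho}\le C\log(e+2\rho)$ of Proposition~\ref{prop. fbeta} and the pointwise lower bound $\int_{B(0;\sigma)}f_0\gtrsim (\log(e+1/\sigma))^{-N/2}$ from \eqref{eq. lower estimate of int of fbeta}.

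First I would establish \eqref{eq. threshold liminf} for small $\kappa$. Substituting the norm bound for $f_0$ into \eqref{eq. main notvanish} with a fixed $T=T_0>0$ gives the sufficient condition
\[
\kappa\, C\log(e+2T_0^{\alpha/2})\bigl(\log(e+2T_0)\bigr)^{N/2}\le C'\alpha^{N/2}.
\]
As $\alpha\nearrow 1$ both logarithmic factors tend to finite limits, and these limits themselves approach $1$ as $T_0\to 0^+$. Hence there exists $\kappa_1>0$ such that for every $\kappa<\kappa_1$ one can choose $T_0=T_0(\kappa)>0$ for which the inequality holds uniformly near $\alpha=1$; Theorem~\ref{theorem 2} then produces $\liminf_{\alpha\nearrow 1}T_\alpha[\kappa f_0]\ge T_0(\kappa)>0$.

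For the opposite direction and the refined bound \eqref{eq. estimate for alpha threshold}, I use the necessary condition with the parametrization $\sigma=(sT)^{\alpha/2}$, $s\in(0,1)$, $T=T_\alpha[\kappa f_0]/2$. Plugging in the lower bound on $\int_{B(0;\sigma)}f_0$ reduces it to
\[
\frac{\alpha}{2}\,\LOG{sT}\gtrsim \kappa^{2/N}\int_{s/16}^{1/4}t^{-\alpha}\,dt=\frac{\kappa^{2/N}\bigl((1/4)^{1-\alpha}-(s/16)^{1-\alpha}\bigr)}{1-\alpha}.
\]
The critical choice is $s=e^{-1/(1-\alpha)}$, which balances the two asymptotic regimes of the integral: $(s/16)^{1-\alpha}\to e^{-1}$, so the right-hand side is of order $\kappa^{2/N}(1-e^{-1})/(1-\alpha)$, while $\LOG{sT}\sim 1/(1-\alpha)-\log T$. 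Solving for $\log T$ yields
\[
\log T_\alpha[\kappa f_0]\le \frac{c_1-c_2\kappa^{2/N}}{1-\alpha}
\]
for positive constants $c_1,c_2$. Thus $\lim_{\alpha\nearrow 1}T_\alpha[\kappa f_0]=0$ whenever $\kappa>\kappa_2:=(c_1/c_2)^{N/2}$, and moreover for all sufficiently large $\kappa$ this estimate sharpens to $\log T_\alpha[\kappa f_0]\lesssim -\kappa^{2/N}/(1-\alpha)$, the upper half of \eqref{eq. estimate for alpha threshold}. The matching lower half follows from Theorem~\ref{theorem 1} applied with $\gamma=N/2$ to $\kappa f_0$ and the ansatz $T=\exp(-M/(1-\alpha))$: then $\LOG{T}\simeq M/(1-\alpha)$ while $\log(e+2T^{\alpha/2})$ stays bounded, so \eqref{eq. main vanish} reduces to $\kappa\lesssim\alpha^{N/2}M^{N/2}$, satisfied by $M=C\kappa^{2/N}$ with $C$ large.

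Finally I set $\kappa^*:=\sup\{\kappa>0:\liminf_{\alpha\nearrow 1}T_\alpha[\kappa f_0]>0\}\in[\kappa_1,\kappa_2]$. The monotonicity of $T_\alpha[\kappa f_0]$ in $\kappa$ gives \eqref{eq. threshold liminf} for all $\kappa<\kappa^*$, and combining the quantitative upper bound above with the same monotonicity gives \eqref{eq. threshold lim} for all $\kappa>\kappa^*$. The main technical obstacle is the precise choice $s=e^{-1/(1-\alpha)}$ in the necessary condition: the integral $\int_{s/16}^{1/4}t^{-\alpha}dt$ transitions between a logarithmic regime (when $s\gg e^{-1/(1-\alpha)}$) and a $(1-\alpha)^{-1}$ regime (when $s\ll e^{-1/(1-\alpha)}$), and exactly this transition produces the factor $\kappa^{2/N}/(1-\alpha)$ that embodies the collapse of the life span as $\alpha\nearrow 1$.
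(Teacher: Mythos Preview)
Your proposal is correct and follows essentially the same route as the paper: Theorem~\ref{theorem 2} together with Proposition~\ref{prop. fbeta} for the small-$\kappa$ lower bound, the necessary condition \eqref{eq. Hisa-Kojima necesarry} with the scale choice $\sigma^{2/\alpha}=e^{-1/(1-\alpha)}T$ for the upper bound and the decay rate, Theorem~\ref{theorem 1} with $\gamma=N/2$ for the matching lower half of \eqref{eq. estimate for alpha threshold}, and the monotonicity from Proposition~\ref{prop. comparison} to produce the threshold. Your unified derivation of both the crude conclusion $\lim_{\alpha\nearrow 1}T_\alpha[\kappa f_0]=0$ and the sharp rate from the single inequality $\log T\le (c_1-c_2\kappa^{2/N})/(1-\alpha)$ is a mild streamlining over the paper, which handles these two points with separate choices of $\sigma$.
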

	
	\begin{proof}
		Firstly, we observe the existence of the threshold $\kappa=\kappa^{*}$. Let $\kappa >0$ be sufficiently small so that there exists $T>0$ satisfying
		\begin{equation}
		\log\left( e+2T \right)^{1+\frac{N}{2}} = C\kappa^{-1},
		\end{equation}
		where $C>0$ is the constant used in \eqref{eq. main notvanish}. Therefore, By Theoremo~\ref{theorem 2} and \eqref{eq. norm N2 of fbeta}, \eqref{eq. threshold liminf} is guaranteed for small $\kappa>0$. On the other hand, by \eqref{eq. Hisa-Kojima necesarry log ver.} with $T:= \min\left( 1, T_{\alpha}[\kappa f_0] \right)$ and $\sigma := T^{\alpha}$ so that $ T/(\sigma^{2/\alpha})=T$, we get,
		\[
		T^{\frac{N}{2}(1-\alpha)} \le C \kappa^{-1}.
		\]
		This implies that
		\[
		T= \min \left(1, T_{\alpha}[\kappa f_0] \right)\le \left( C\kappa^{-1} \right)^{\frac{N}{2(1-\alpha)}} \to 0
		\]
		as $\alpha \nearrow 1$, if $\kappa>0$ is large so that $C\kappa^{-1}<1$. Therefore, by Proposition~\ref{prop. comparison}, the threshold is obtained by
		\[
		\begin{aligned}
		\kappa^{*} &:= \sup \left\{ \kappa>0; \liminf_{\alpha \nearrow 1}T_{\alpha}[\kappa f_0]>0 \right\}\\
		&:= \inf\left\{ \kappa >0; \lim_{\alpha \nearrow 1} T_{\alpha}[\kappa f_0]=0 \right\}.
		\end{aligned}
		\]
		Indeed, assume that there exist $\kappa_1<\kappa_2$ such that
		\[
		\lim_{\alpha \nearrow 1} T_{\alpha}[\kappa_1 f_0]=0,\quad \liminf_{\alpha \nearrow 1}T_{\alpha}[\kappa_2 f_0]>0.
		\]
		In particular, there exists $\alpha^* \in (0,1)$ such that
		\[
		T_{\alpha^*}[\kappa_1 f_0]< T_{\alpha^*}[\kappa_2f_0].
		\]
		On the other hand, Proposition~\ref{prop. comparison} yields that
		\[
		T_{\alpha^*}[\kappa_1 f_0]\ge  T_{\alpha^*}[\kappa_2f_0]
		\]
		which is the desired contradiction.
		
		Next, we show the estimate \eqref{eq. estimate for alpha threshold}. First, we deduce the lower estimate by using Theorem~\ref{theorem 2}. Indeed, by \eqref{eq. norm N2 beta of fbeta},
		\begin{equation}\label{eq. proof of threshold1}
		\kappa\log\left(e+2T\right) \le C (1-\alpha)^{\frac{N}{2}} \left( \LOG{T} \right)^{\frac{N}{2}}
		\end{equation}
		for appropriate $C>0$ gives the solvability until $T>0$. Substituting
		\[
		T= \exp\left( -C \kappa^{\frac{2}{N}} \left( 1-\alpha \right)^{-1} \right)
		\]
		into \eqref{eq. proof of threshold1}, we verify the lower estimate. Indeed, we easily observe that
		\[
		\left( \LOG{T} \right)^{-\frac{N}{2}}\le \left( \log\left( \frac{1}{T} \right) \right)^{-\frac{N}{2}}= C \kappa^{-1} (1-\alpha)^{\frac{N}{2}}
		\]
		and
		\[
		\log\left( e+2T \right)\le \log\left( e+2 \right).
		\]
		Thus, it suffices to take an appropriate $C>0$.
		
		Finally, we prove the upper estimate by using \eqref{eq. Hisa-Kojima necesarry log ver.} with
		\[
		\sigma^{2/\alpha} =e^{-\frac{1}{1-\alpha}} T_{\alpha}[\kappa f_0].
		\]
		Then, \eqref{eq. Hisa-Kojima necesarry log ver.} means that
		\begin{equation}
		\kappa \left( \LOG{\sigma} \right)^{-\frac{N}{2}} \lesssim \left( \log\left( e+ e^{\frac{1}{1-\alpha}} \right) \right)^{-\frac{N}{2}}\lesssim (1-\alpha)^{\frac{N}{2}}.
		\end{equation}
		Therefore, if $1-\alpha$ is sufficiently small and $\kappa$ is large,
		\begin{equation}
		\begin{aligned}
		T_{\alpha}[\kappa f_0]&\le e^{(1-\alpha)^{-1}} \frac{ \exp\left(-C\kappa^{\frac{2}{N}}(1-\alpha)^{-1}\right) }{ 1- \exp\left( 1- C\kappa^{\frac{2}{N}(1-\alpha)^{-1}} \right)  }\\
		&\lesssim \exp\left( -C \kappa^{\frac{2}{N}}(1-\alpha)^{-1} + (1-\alpha)^{-1}  \right)\\
		&\lesssim \exp\left( -C' \kappa^{\frac{2}{N}}(1-\alpha)^{-1}\right).
		\end{aligned}
		\end{equation}
	\end{proof}

	\begin{rem}
		{\rm
		It is a very interesting, but difficult (maybe hopeless to solve) problem whether $\tilde{\kappa}=\kappa^*$ or not. Indeed, finding specific values appearing in \eqref{eq. main vanish}, \eqref{eq. main notvanish}, and \eqref{eq. Hisa-Kojima necesarry} is extremely hard work. Even if we find the concrete quantities, since we define $\tilde{\kappa}$ and $\kappa^*$ as supremums, or infimums of certain sets, we can only derive the estimates of thresholds.
		}
	\end{rem}

	Finally, we are concerned with the case $\beta<0$, which corresponds to the unsolvability of \eqref{eq. Fujita}. Although Hisa and the author \cite{HisaKojima24} investigated the life span estimate for the cut-off initial data (see Remark~\ref{rem. beta negative} below), we shall deduce the analogous estimate by using Theorem~\ref{theorem 1} for the completeness of the observation in this section.
	
	\begin{theo}\label{theo. beta negative}
		Suppose that $-\frac{N}{2}<\beta<0$. Then, it follows that
		\begin{equation}\label{eq. life span beta negative}
		\log T_{\alpha}\left[ \kappa f_\beta \right]\simeq - \kappa^{\frac{2}{\kappa+2\beta}} (1-\alpha)^{-\frac{N}{N+2\beta}}
		\end{equation}
		near $\alpha=1$ and sufficiently large $\kappa>0$.
	\end{theo}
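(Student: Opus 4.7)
The plan is to establish \eqref{eq. life span beta negative} by matching a lower bound for $T_\alpha[\kappa f_\beta]$ obtained from the sufficient condition in Theorem~\ref{theorem 1} (applied with $\gamma = N/2+\beta$, which is the natural Zygmund regularity of $f_\beta$ per Proposition~\ref{prop. fbeta}) against an upper bound from the necessary condition \eqref{eq. Hisa-Kojima necesarry log ver.} with a carefully tuned test radius $\sigma$.

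For the lower bound, by Proposition~\ref{prop. fbeta} and \eqref{eq. norm N2 beta of fbeta},
\[
\| \kappa f_\beta \|_{1, N/2+\beta; T^{\alpha/2}} \lesssim \kappa \log(e+2T^{\alpha/2}).
\]
Theorem~\ref{theorem 1} with $\gamma = N/2+\beta$ guarantees local-in-time solvability on $(0,T)$ whenever
\[
\kappa \log(e+2T^{\alpha/2}) \le C \alpha^{N/2}(1-\alpha)^{N/2} \left(\LOG{T}\right)^{N/2+\beta}.
\]
Since we aim at $\alpha$ close to $1$ and $\kappa$ large, $T$ will be small, so $\log(e+2T^{\alpha/2})$ is bounded by an absolute constant and the condition reduces to $\kappa \lesssim (1-\alpha)^{N/2}(\LOG{T})^{N/2+\beta}$. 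As $N/2+\beta>0$, this inequality is satisfied by
\[
T = \exp\bigl( -C_{1}\, \kappa^{\frac{2}{N+2\beta}} (1-\alpha)^{-\frac{N}{N+2\beta}} \bigr)
\]
for an appropriate $C_{1}>0$, yielding the lower half of \eqref{eq. life span beta negative}.

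For the upper bound, set $T:=T_\alpha[\kappa f_\beta]/2$ and choose $\sigma>0$ so that
\[
\frac{T}{\sigma^{2/\alpha}} = e^{\frac{1}{1-\alpha}},
\]
which in particular gives $\sigma^{2/\alpha}<T$ and $\log(1/\sigma) \simeq \alpha\log(1/T)/2$ when $T$ is small. Inserting this into \eqref{eq. Hisa-Kojima necesarry log ver.} and using the pointwise lower bound \eqref{eq. lower estimate of int of fbeta} for $\int_{B(0;\sigma)}f_\beta$, one finds
\[
\kappa \bigl(\LOG{\sigma}\bigr)^{-N/2-\beta}
\lesssim e^{N/2} \bigl(1-\alpha\bigr)^{N/2},
\]
because the factor $(T/\sigma^{2/\alpha})^{N(1-\alpha)/2}$ collapses to $e^{N/2}$ and $\log(e+T/\sigma^{2/\alpha})\simeq 1/(1-\alpha)$. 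Rearranging, $\log(1/T) \gtrsim \kappa^{\frac{2}{N+2\beta}}(1-\alpha)^{-\frac{N}{N+2\beta}}$, which is the matching upper bound.

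The main technical point — and the step I expect to require the most care — is the choice of $\sigma$ in the necessary condition: the two asymptotics $(T/\sigma^{2/\alpha})^{N(1-\alpha)/2}$ (which tends to $0$ or $\infty$ depending on the chosen ratio) and $(\log(e+T/\sigma^{2/\alpha}))^{-N/2}$ have to be balanced so that the $(1-\alpha)$-dependence appears with the correct exponent $N/(N+2\beta)$. Picking the ratio to be $\exp(1/(1-\alpha))$ achieves exactly this balance, but one must also verify monotonicity and the constraint $\sigma \in (0,T^{\alpha/2})$ for $T$ small, which is straightforward provided $\kappa$ is large and $\alpha$ is sufficiently close to $1$. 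Once the balance is set, the algebra in both directions is symmetric and yields \eqref{eq. life span beta negative}.
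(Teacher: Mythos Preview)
Your proposal is correct and follows essentially the same approach as the paper. For the lower bound you invoke Theorem~\ref{theorem 1} with $\gamma=N/2+\beta$ together with the norm estimate \eqref{eq. norm N2 beta of fbeta}, and for the upper bound you apply \eqref{eq. Hisa-Kojima necesarry log ver.} with the test radius chosen so that $T/\sigma^{2/\alpha}=e^{1/(1-\alpha)}$; this is exactly the paper's choice $\sigma^{2/\alpha}=e^{-1/(1-\alpha)}T_\alpha[\kappa f_\beta]$, and the subsequent algebra matches. The only point worth tightening is your claim $\log(1/\sigma)\simeq \tfrac{\alpha}{2}\log(1/T)$: since $\log(1/\sigma)=\tfrac{\alpha}{2}\log(1/T)+\tfrac{\alpha}{2(1-\alpha)}$, you need the additive term $1/(1-\alpha)$ to be absorbed, which follows because $N/(N+2\beta)>1$ for $\beta<0$ makes $\kappa^{2/(N+2\beta)}(1-\alpha)^{-N/(N+2\beta)}\gg (1-\alpha)^{-1}$ near $\alpha=1$; the paper makes this step explicit in its final display.
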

	\begin{proof}
		We shall derive the lower estimate by using Theorem~\ref{theorem 1}. By \eqref{eq. norm N2 beta of fbeta} it suffices to show that for appropriate $D>0$,
		\[
		T = \frac{1}{\exp\left( D\kappa^{\frac{2}{N+2\beta}} (1-\alpha)^{-\frac{N}{N+2\beta}} \right) -e } 
		\]
		satisfies
		\begin{equation}
		\log\left( e+2T \right) \le  C\kappa^{-1} (1-\alpha)^{\frac{N}{2}} \left( \LOG{T} \right)^{\frac{N}{2}+\beta}
		\end{equation}
		where $C$ is constant appeared in Theorem~\ref{theorem 1}. Indeed, near $\alpha=1$,
		\[
		\begin{aligned}
		\log\left( e+2T \right) \left( \LOG{T} \right)^{-\frac{N}{2}-\beta}&= D^{-\frac{N}{2}-\beta} \kappa^{-1} (1-\alpha)^{\frac{N}{2}} \log\left( e+2T \right)\\
		&\le D^{-\frac{N}{2}-\beta}\log\left( e+2 \right) \kappa^{-1} (1-\alpha)^{\frac{N}{2}}.
		\end{aligned}
		\]
		
		Conversely, \eqref{eq. Hisa-Kojima necesarry log ver.} with $\sigma^{2/\alpha}= e^{-\frac{1}{1-\alpha}} T_{\alpha}[\kappa f_\beta]$ provides
		\begin{equation}
		\left( \LOG{\sigma} \right)^{-\frac{N}{2}-\beta}\le C \kappa^{-1} (1-\alpha)^{\frac{N}{2}}.
		\end{equation}
		Therefore,
		\[
		\begin{aligned}
		&T_{\alpha}[\kappa f_{\beta}]^{\frac{\alpha}{2}}\\
		&\le \frac{e^{\frac{1}{1-\alpha}}}{\exp\left( C\kappa^{\frac{2}{N+2\beta}} (1-\alpha)^{-\frac{N}{N+2\beta}} \right) -e }\\
		&\le \frac{1}{1- e^{1- C\kappa^{\frac{2}{N+2\beta}}} (1-\alpha)^{-\frac{N}{N+2\beta}} }\cdot \exp\left( -C \kappa^{\frac{2}{N+2\beta}} (1-\alpha)^{-\frac{N}{N+2\beta}} + \frac{1}{1-\alpha}\right).
		\end{aligned}
		\]
		On the other hand,
		\[
		\begin{aligned}
		&-C \kappa^{\frac{2}{N+2\beta}} (1-\alpha)^{-\frac{N}{N+2\beta}} +(1-\alpha)^{-1}\\
		&=\kappa^{\frac{2}{N+2\beta}}(1-\alpha)^{-\frac{N}{N+2\beta}} \left(-C+ \kappa^{-\frac{2}{N+2\beta}} (1-\alpha)^{-\frac{2\beta}{N+2\beta}} \right).
		\end{aligned}
		\]
		Hence, we verify the desired estimate near $\alpha=1$ and sufficiently large $\kappa>0$.
		
	\end{proof}
	
	\begin{rem}\label{rem. beta negative}
		{\rm
			Hisa and the author \cite{HisaKojima24} investigated the life span estimate for initial data
			\[
			g_{\beta}(x):= |x|^{-N} \left| \log|x| \right|^{-\frac{N}{2}-1-\beta} \chi_{B(0;e^{-3/2})}
			\]
			with $-N/2<\beta<0$. Note that $g_{\beta}\in L^{1}(\mathbb{R}^N)$ but $f_{\beta}\notin L^{1}(\mathbb{R}^N)$. Therefore, it follows that $T_{\alpha}[\kappa g_{\epsilon}]=\infty$ for small enough $\kappa>0$, while $T_{\alpha}[\kappa f_{\beta}]<\infty$ for any $\kappa>0$. On the other hand, for each $\kappa>0$, near $\alpha=1$, it follows that
			\[
			\log T_{\alpha}[\kappa g_{\beta}]\simeq - \kappa^{\frac{2}{N+2\beta} } (1-\alpha)^{-\frac{N}{N+2\beta}},
			\] 
			which is the same order as \eqref{eq. life span beta negative}.
		}
	\end{rem}

	\subsection{Lee-Ni type estimate}
	
	In this subsection, we deduce a life span estimate for the initial data
	\[
	\phi_A(x):=(1+|x|)^{-A}.
	\]
	Lee and Ni \cite{LeeNi92} obtained detailed life span estimates for \eqref{eq. Fujita} with $u(0)=\phi_A$, and Hisa and Ishige \cite{HisaIshige18} introduce the alternative proof via the necessary and sufficient conditions for \eqref{eq. Fujita}. We apply Theorems~\ref{theorem 1}, \ref{theorem 2} and \eqref{eq. Hisa-Kojima necesarry} to obtain the optimal life span estimate. In particular, we mainly focus on the case $A=N$.
	
	\begin{theo}\label{theo. LeeNi}
		For sufficiently small $\kappa>0$, it holds that
		\begin{equation}
		\begin{aligned}
		&\max\left( \exp\left( C_1(\alpha) \kappa^{-1} \right), \exp\left( D_1\kappa^{-\frac{2}{N+2}} \right) \right)\\
		&\le T_{\alpha}[\kappa \phi_N]\\
		&\le 
		\min \left( \exp\left( C_2 \kappa^{-1} \right), E(\kappa ,\alpha)\exp\left( D_2\kappa^{-\frac{2}{N+2}} \right) \right).
		\end{aligned}
		\end{equation}
		Here,
		\[
		\begin{aligned}
		C_1(\alpha)&=C(1-\alpha)^{\frac{N}{2}},\\
		E(\kappa, \alpha)&=C\left(4^{\alpha-1}-C(1-\alpha)\kappa^{-\frac{2}{N+2}}\right)_+^{-\frac{1}{1-\alpha}}.
		\end{aligned}
		\]
		In particular, when $\alpha \nearrow 1$,
		\[
		\exp\left( D_1 \kappa^{-\frac{2}{N+2}} \right) \le \liminf_{\alpha \nearrow 1} T_{\alpha}[\kappa \phi_N]\le \limsup_{\alpha \nearrow1} T_{\alpha}[\kappa \phi_N]\le \exp\left( D_2 \kappa^{-\frac{2}{N+2}} \right)
		\]
		for small $\kappa>0$.
	\end{theo}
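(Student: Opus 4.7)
The proof will parallel the analysis of Theorem~\ref{theo. beta positive: kappa small} almost verbatim, with $f_\beta$ replaced by $\phi_N(x)=(1+|x|)^{-N}$; the reason both results take the same form is that, modulo bounded factors, $\phi_N$ has the same uniformly local weak Zygmund profile as $f_0$. Concretely, the first step is to establish
\begin{equation*}
\|\phi_N\|_{1,\gamma;\rho} \lesssim \log(e+2\rho) \quad \text{for } \gamma = 0 \text{ and } \gamma = N/2.
\end{equation*}
Because $\phi_N$ is bounded near the origin, the rearrangement $(\phi_N \chi_{B(z;\rho)})^{*}$ is bounded, so the contribution of small $s$ in \eqref{eq. zygmund norm for cut off} is harmless; the sup is controlled by $\int_{B(0;\rho)} \phi_N$, which by direct computation is $\simeq \log(e+\rho)$. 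Lemma~\ref{lem. log integral} (i) (applied with $a=-1$) handles the interaction with the weight $(\LOG{s})^{N/2}$ exactly as in Proposition~\ref{prop. fbeta}.

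For the two lower estimates I would apply Theorems~\ref{theorem 1} and~\ref{theorem 2} to $\varphi = \kappa \phi_N$. Using Theorem~\ref{theorem 1} with $\gamma = 0$, the sufficient condition reduces to $\kappa \log(e+2T^{\alpha/2}) \le C\alpha^{N/2}(1-\alpha)^{N/2}$, which the choice $T = \exp(C_1'(1-\alpha)^{N/2}\kappa^{-1})$ satisfies for $C_1'$ small. Using Theorem~\ref{theorem 2} (so $\gamma = N/2$) gives $\kappa (\log(e+2T))^{1+N/2} \le C\alpha^{N/2}$, verified by $T = \exp(D_1 \kappa^{-2/(N+2)})$ for small $D_1$. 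These two applications produce the two lower bounds on $T_\alpha[\kappa\phi_N]$.

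For the upper estimates I would use the necessary condition in its logarithmic form \eqref{eq. Hisa-Kojima necesarry log ver.}, together with the matching lower bound on local mass
\begin{equation*}
\int_{B(0;\sigma)} \phi_N(x)\, dx \gtrsim \log \sigma \qquad (\sigma \ge 1),
\end{equation*}
obtained by restricting the polar integral to $r\in[1,\sigma]$ and using $(1+r)^{-N} \ge 2^{-N} r^{-N}$. First, taking $T = T_\alpha[\kappa \phi_N]/2$ and $\sigma^{2/\alpha} = T/2$ collapses the ratio $T/\sigma^{2/\alpha}$ to a constant and yields $\kappa \log T_\alpha[\kappa \phi_N] \lesssim 1$, which is the first upper bound $\exp(C_2\kappa^{-1})$. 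Second, choosing $\sigma^{2/\alpha} = (1/2)\exp(D\kappa^{-2/(N+2)})$ for $D<D_1$ reproduces the calculation in Theorem~\ref{theo. beta positive: kappa small} and leads to
\begin{equation*}
\frac{1}{1-\alpha}\left[4^{\alpha-1} - 16^{\alpha-1} T_\alpha[\kappa\phi_N]^{\alpha-1} \exp\!\left(\tfrac{\alpha D}{2}(1-\alpha)\kappa^{-\frac{2}{N+2}}\right)\right] \lesssim \kappa^{-\frac{2}{N+2}},
\end{equation*}
from which a direct algebraic rearrangement yields $T_\alpha[\kappa\phi_N] \le E(\kappa,\alpha)\exp(D_2\kappa^{-2/(N+2)})$.

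The main technical obstacle is verifying $\|\phi_N\|_{1,N/2;\rho} \lesssim \log(e+2\rho)$ with the correct $\rho$-dependence uniformly in $z\in\mathbb{R}^N$: for $|z|$ comparable to $\rho$ one must take some care with the rearrangement of $\phi_N|_{B(z;\rho)}$. Nevertheless the boundedness of $\phi_N$ away from $0$ and its slow decay at infinity, combined with the translation symmetry making $z=0$ the worst case, reduce this to a one-variable computation. Once this estimate is in hand, everything else follows the template of Theorem~\ref{theo. beta positive: kappa small} line by line; the $\alpha$-dependence in $C_1(\alpha)$ and $E(\kappa,\alpha)$ is inherited from Theorems~\ref{theorem 1} and~\eqref{eq. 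Hisa-Kojima necesarry log ver.} respectively, exactly as in the $f_\beta$ case.
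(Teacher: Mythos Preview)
Your proposal is correct and follows essentially the same route as the paper: estimate $\|\phi_N\|_{1,\gamma;\rho}\lesssim\log(e+2\rho)$, feed this into Theorems~\ref{theorem 1} and~\ref{theorem 2} for the two lower bounds, and use the necessary condition \eqref{eq. Hisa-Kojima necesarry} with the two choices of $\sigma$ from Theorem~\ref{theo. beta positive: kappa small} for the upper bounds.

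One small point worth flagging: your invocation of Lemma~\ref{lem. log integral}~(i) ``exactly as in Proposition~\ref{prop. fbeta}'' for the weight interaction does not quite work as written. In Proposition~\ref{prop. fbeta} the rearrangement satisfies $f_\beta^*(\tau)\lesssim\tau^{-1}(\LOG{\tau})^{-N/2-1-\beta}$, and the logarithmic factor is precisely what makes Lemma~\ref{lem. log integral}~(i) applicable and what cancels against the weight $(\LOG{s})^{N/2+\beta}$. For $\phi_N$ one has $\phi_N^*(\tau)=(1+(\tau/\omega_N)^{1/N})^{-N}$, which is $\lesssim 1$ for small $\tau$ and $\lesssim\tau^{-1}$ for large $\tau$, with no logarithmic decay. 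The paper therefore does not use Lemma~\ref{lem. log integral} here; it simply splits at $s=1/2$, bounds $\int_0^s\phi_N^*\le s$ on $(0,1/2)$ so that $s(\LOG{s})^{N/2}$ is bounded, and on $(1/2,\omega_N\rho^N)$ uses $\int_0^s\phi_N^*\lesssim\log(1+s/\omega_N)$ together with $(\LOG{s})^{N/2}\le(\LOG{2^{-1}})^{N/2}$. Your sentence ``the sup is controlled by $\int_{B(0;\rho)}\phi_N$'' already contains the correct idea; just drop the Lemma~\ref{lem. log integral} reference and replace it with this direct two-piece bound. A second minor difference: the paper applies Theorem~\ref{theorem 1} with $\gamma=N/2+\beta$ rather than your $\gamma=0$, but since $(\LOG{T})^\gamma\simeq 1$ for large $T$ both choices yield the same $\exp(C_1(\alpha)\kappa^{-1})$ lower bound.
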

	
	\begin{proof}
		For simplicity, we omit $\phi=\phi_N$. We shall calculate the uniformly local weak Zygmund norm of $\phi$. Note that
		\begin{equation}
		\| \phi\|_{1,\frac{N}{2};\rho}\le \sup_{0<s<\omega_N\rho^{N}} \left[ \left( \LOG{s} \right)^{\frac{N}{2}} \int_{0}^{s} \phi^{*}(\tau)d\tau \right]
		\end{equation}
		by \eqref{eq. zygmund norm for cut off} and
		\begin{equation}
		\phi^{*}(\tau) = \left( 1+ \left( \frac{\tau}{\omega_N} \right)^{\frac{1}{N}} \right)^{-N}.
		\end{equation}
		It holds that for $0<s<1/2$,
		\[
		\int_{0}^{s} \phi^{*}(\tau)d\tau\le s
		\]
		and for $s>1/2$,
		\[
		\begin{aligned}
		\int_{0}^{s} \phi^{*}(\tau) d\tau &\lesssim \int_{0}^{s} \left( 1+ \frac{\tau}{\omega_N} \right)^{-1}d\tau\\
		&\lesssim \log\left( 1+ \frac{s}{\omega_N} \right).
		\end{aligned}
		\]
		Therefore, for sufficiently large $\rho>0$,
		\begin{equation}
		\| \phi\|_{1,\frac{N}{2};\rho}\lesssim \max\left( A, B \right)
		\end{equation}
		where
		\begin{equation}
		\begin{aligned}
		A&:= \sup_{0<s<\frac{1}{2}} \left[ \left( \LOG{s} \right)^{\frac{N}{2}} s\right]\\
		&=\text{Constant (independent of }\rho\text{)}
		\end{aligned}
		\end{equation}
		and
		\begin{equation}
		\begin{aligned}
		B&:= \sup_{\frac{1}{2}<s<\omega_N\rho^{N}} \left[ \left(\LOG{s}\right)^{\frac{N}{2}} \log\left( 1+ \frac{s}{\omega_N} \right) \right]\\
		&\le \left( \LOG{2^{-1}} \right)^{\frac{N}{2}} \log\left( 1+ \rho^{N} \right).
		\end{aligned}
		\end{equation}
		Hence, we can take sufficiently large $T>0$ so that
		\begin{equation}
		\| \phi\|_{1,\frac{N}{2};T^{\frac{\alpha}{2}}}\lesssim \log T .
		\end{equation}
		By the same argument, we can take large $T>0$ so that
		\begin{equation}
		\| \phi\|_{1, \frac{N}{2}+\beta ; T^{\frac{\alpha}{2}}}\lesssim \log T.
		\end{equation}
		Thus, by Theorem~\ref{theorem 1}, the condition
		\begin{equation}\label{eq. proof of LeeNi}
		\kappa \log T \le C (1-\alpha)^{\frac{N}{2}} \left( \LOG{T} \right)^{\frac{N}{2}+\beta}
		\end{equation}
		gives the solvability of \eqref{eq. TFF} with $u(0)=\kappa \phi$ on $(0,T)$ for large enough $T>0$. It is easily verified that $T= \exp\left( C_1(\alpha) \kappa^{-1} \right)$ satisfies \eqref{eq. proof of LeeNi} for small $\kappa>0$. 
		
		Moreover, by Theorem~\ref{theorem 2}, the condition
		\begin{equation}\label{eq. proof of LeeNi 2}
		\kappa \log T \le C \left( \log\left( e+2T\right) \right)^{-\frac{N}{2}}
		\end{equation}
		gives the solvability of \eqref{eq. TFF} with $u(0)=\kappa \phi$ on $(0,T)$ for large $T>0$. We easily check that $T=\exp \left( D_1\kappa^{-\frac{2}{N+2}}\right)$ fulfills \eqref{eq. proof of LeeNi 2} with appropriate $D_1>0$ and sufficiently small $\kappa>0$.
		
		Conversely, the upper estimate is guaranteed by a similar argument to the proof of Theomrem~\ref{theo. beta positive: kappa small}, by using \eqref{eq. Hisa-Kojima necesarry} and
		\[
		\int_{B(0, \rho)} \phi(x)\,dx\ge C \log \left( 1 + \rho \right)
		\]
		for $\rho>1$.
		
	\end{proof}
	
	\begin{rem}
		{\rm
			Formally, let $T_1[\varphi]$ be a life span for \eqref{eq. Fujita} with initial data $u(0)=\varphi$. For \eqref{eq. Fujita} with $u(0)=\kappa \phi$ and $p=p_F$, it holds that
			\begin{equation}\label{eq. LeeNi for Fujita}
			\log T_1[\kappa \phi_N] \simeq \kappa^{-\frac{2}{N+2}}
			\end{equation}
			for small enough $\kappa>0$. See \cite[Theorem~3.15, 3.21]{LeeNi92} and \cite[Theorem~5.1, 5.2]{HisaIshige18}. Theorem~\ref{theo. LeeNi} reflects the difference of the optimal rate of $T_{\alpha}[\kappa \phi]$ with respect to $\kappa>0$, which is also observed in Theorem~\ref{theo. beta positive: kappa large} and Theorem~\ref{theo. beta positive: kappa small} for $u(0)=\kappa f_{\beta}$. Althoug the optimal rate for $0<\alpha <1$ is $\log T_{\alpha}[\kappa\phi] \simeq \kappa^{-1}$, it turns into $\log T_{1}[\kappa \phi]\simeq \kappa^{-\frac{2}{N+2}}$ when $\alpha =1$, which coincides with \eqref{eq. LeeNi for Fujita}.
		}
	\end{rem}

	\begin{rem}
		{\rm 
		Consider the problem \eqref{eq. TFF} with $p=p_F$ and $u(0)=\phi_A$ for $A>N$. In this case, it follows that there exists a threshold $\kappa^*=\kappa^{*}(\alpha)>0$ such that,
		\begin{itemize}
			\item if $0<\kappa < \kappa^{*}$, then $T_{\alpha}[\kappa \phi_A]=\infty$ ;
			\item if $\kappa^* < \kappa$, then $T_{\alpha}[\kappa \phi_A]<\infty$.
		\end{itemize}
		Moreover, we see that $\kappa^*(\alpha)\simeq (1-\alpha)^{N/2}$. See \cite[Theorem~1.5]{HisaKojima24}.
		
		On the other hand, for \eqref{eq. Fujita} with $p=p_F$ and $u(0)=\phi_A$, it is known that
		\[
		\log T_1[\kappa \phi_A] \simeq \kappa^{-\frac{2}{N}}
		\]
		for sufficiently small $\kappa>0$ (see \cite{LeeNi92, HisaIshige18}). As for the generalization of this estimate for \eqref{eq. TFF}, we expect that if $\kappa-\kappa^{*}>0$ is sufficiently small, then
		\[
		\log T_{\alpha}[\kappa \phi_A] \simeq \kappa^{-\frac{2}{N}}
		\] 
		follows. However, it is hopeless to justify this expectation since the specific value of $\kappa^*$ is unknown, and we cannot observe the behavior of $T_{\alpha}[\kappa \phi_A]$ near $\kappa = \kappa^*$. We do not even know whether $T_{\alpha}[\kappa \phi_A]\to \infty$ as $\kappa \searrow \kappa^*$.
		}
	\end{rem}

	
	\section*{Acknowledgement}
	The authors of this paper would like to thank his supervisor Michiaki Onodera for the helpful advice to the preparation of this paper.
	
	\bibliographystyle{plain}
	\bibliography{ref}

@book {Podl99,
	AUTHOR = {Podlubny, Igor},
	TITLE = {Fractional differential equations},
	SERIES = {Mathematics in Science and Engineering},
	VOLUME = {198},
	NOTE = {An introduction to fractional derivatives, fractional
	differential equations, to methods of their solution and some
	of their applications},
	PUBLISHER = {Academic Press, Inc., San Diego, CA},
	YEAR = {1999},
	PAGES = {xxiv+340},
	ISBN = {0-12-558840-2},
	MRCLASS = {26A33 (34K05)},
	MRNUMBER = {1658022},
	MRREVIEWER = {Anatoly Kilbas},
}

@article {Wei80,
	AUTHOR = {Weissler, Fred B.},
	TITLE = {Local existence and nonexistence for semilinear parabolic
	equations in {$L^{p}$}},
	JOURNAL = {Indiana Univ. Math. J.},
	FJOURNAL = {Indiana University Mathematics Journal},
	VOLUME = {29},
	YEAR = {1980},
	NUMBER = {1},
	PAGES = {79--102},
	ISSN = {0022-2518},
	MRCLASS = {35K55 (34G20)},
	MRNUMBER = {554819},
	MRREVIEWER = {John M. Ball},
	DOI = {10.1512/iumj.1980.29.29007},
	URL = {https://doi.org/10.1512/iumj.1980.29.29007},
}

@article {BreCaz96,
	AUTHOR = {Brezis, Ha\"{\i}m and Cazenave, Thierry},
	TITLE = {A nonlinear heat equation with singular initial data},
	JOURNAL = {J. Anal. Math.},
	FJOURNAL = {Journal d'Analyse Math\'{e}matique},
	VOLUME = {68},
	YEAR = {1996},
	PAGES = {277--304},
	ISSN = {0021-7670},
	MRCLASS = {35K55 (35R05)},
	MRNUMBER = {1403259},
	MRREVIEWER = {Alan V. Lair},
	DOI = {10.1007/BF02790212},
	URL = {https://doi.org/10.1007/BF02790212},
}

@article {ZhanSun15,
	AUTHOR = {Zhang, Quan-Guo and Sun, Hong-Rui},
	TITLE = {The blow-up and global existence of solutions of {C}auchy problems for a time fractional diffusion equation},
	JOURNAL = {Topol. Methods Nonlinear Anal.},
	FJOURNAL = {Topological Methods in Nonlinear Analysis},
	VOLUME = {46},
	YEAR = {2015},
	NUMBER = {1},
	PAGES = {69--92},
	ISSN = {1230-3429},
	MRCLASS = {35R11 (26A33 35A01 35B44)},
	MRNUMBER = {3443679},
	DOI = {10.12775/TMNA.2015.038},
	URL = {https://doi.org/10.12775/TMNA.2015.038},
}

@article {ZLS19,
	AUTHOR = {Zhang, Quanguo and Li, Yaning and Su, Menglong},
	TITLE = {The local and global existence of solutions for a time fractional complex {G}inzburg-{L}andau equation},
	JOURNAL = {J. Math. Anal. Appl.},
	FJOURNAL = {Journal of Mathematical Analysis and Applications},
	VOLUME = {469},
	YEAR = {2019},
	NUMBER = {1},
	PAGES = {16--43},
	ISSN = {0022-247X},
	MRCLASS = {35R11 (35Q56)},
	MRNUMBER = {3857509},
	DOI = {10.1016/j.jmaa.2018.08.008},
	URL = {https://doi.org/10.1016/j.jmaa.2018.08.008},
}

@article {Miya21,
	AUTHOR = {Miyamoto, Yasuhito},
	TITLE = {A doubly critical semilinear heat equation in the {$L^1$}
	space},
	JOURNAL = {J. Evol. Equ.},
	FJOURNAL = {Journal of Evolution Equations},
	VOLUME = {21},
	YEAR = {2021},
	NUMBER = {1},
	PAGES = {151--166},
	ISSN = {1424-3199},
	MRCLASS = {35K55 (35A01 46E30)},
	MRNUMBER = {4238200},
	MRREVIEWER = {Mohammed El A\"{\i}di},
	DOI = {10.1007/s00028-020-00573-2},
	URL = {https://doi.org/10.1007/s00028-020-00573-2},
}

@article {HisaIshige18,
	AUTHOR = {Hisa, Kotaro and Ishige, Kazuhiro},
	TITLE = {Existence of solutions for a fractional semilinear parabolic
	equation with singular initial data},
	JOURNAL = {Nonlinear Anal.},
	FJOURNAL = {Nonlinear Analysis. Theory, Methods \& Applications. An
	International Multidisciplinary Journal},
	VOLUME = {175},
	YEAR = {2018},
	PAGES = {108--132},
	ISSN = {0362-546X},
	MRCLASS = {35R11 (35A01)},
	MRNUMBER = {3830724},
	DOI = {10.1016/j.na.2018.05.011},
	URL = {https://doi.org/10.1016/j.na.2018.05.011},
}

@article {BarasPierre85,
	AUTHOR = {Baras, Pierre and Pierre, Michel},
	TITLE = {Crit\`ere d'existence de solutions positives pour des \'{e}quations
	semi-lin\'{e}aires non monotones},
	JOURNAL = {Ann. Inst. H. Poincar\'{e} Anal. Non Lin\'{e}aire},
	FJOURNAL = {Annales de l'Institut Henri Poincar\'{e}. Analyse Non Lin\'{e}aire},
	VOLUME = {2},
	YEAR = {1985},
	NUMBER = {3},
	PAGES = {185--212},
	ISSN = {0294-1449},
	MRCLASS = {45N05 (47H15)},
	MRNUMBER = {797270},
	MRREVIEWER = {Maurice Gaultier},
	URL = {http://www.numdam.org/item?id=AIHPC_1985__2_3_185_0},
}

@article {FujiIoku18,
	AUTHOR = {Fujishima, Yohei and Ioku, Norisuke},
	TITLE = {Existence and nonexistence of solutions for the heat equation
	with a superlinear source term},
	JOURNAL = {J. Math. Pures Appl. (9)},
	FJOURNAL = {Journal de Math\'{e}matiques Pures et Appliqu\'{e}es. Neuvi\`eme S\'{e}rie},
	VOLUME = {118},
	YEAR = {2018},
	PAGES = {128--158},
	ISSN = {0021-7824},
	MRCLASS = {35K91 (35A01 35B40 46E30)},
	MRNUMBER = {3852471},
	MRREVIEWER = {Hongwei Chen},
	DOI = {10.1016/j.matpur.2018.08.001},
	URL = {https://doi.org/10.1016/j.matpur.2018.08.001},
}

@incollection {Takahashi16,
	AUTHOR = {Takahashi, Jin},
	TITLE = {Solvability of a semilinear parabolic equation with measures
	as initial data},
	BOOKTITLE = {Geometric properties for parabolic and elliptic {PDE}'s},
	SERIES = {Springer Proc. Math. Stat.},
	VOLUME = {176},
	PAGES = {257--276},
	PUBLISHER = {Springer, [Cham]},
	YEAR = {2016},
	MRCLASS = {35K91 (35A01 35K58 35R06)},
	MRNUMBER = {3571832},
	DOI = {10.1007/978-3-319-41538-3\_15},
	URL = {https://doi.org/10.1007/978-3-319-41538-3_15},
}

@article {LeeNi92,
	AUTHOR = {Lee, Tzong-Yow and Ni, Wei-Ming},
	TITLE = {Global existence, large time behavior and life span of
	solutions of a semilinear parabolic {C}auchy problem},
	JOURNAL = {Trans. Amer. Math. Soc.},
	FJOURNAL = {Transactions of the American Mathematical Society},
	VOLUME = {333},
	YEAR = {1992},
	NUMBER = {1},
	PAGES = {365--378},
	ISSN = {0002-9947},
	MRCLASS = {35K55 (35B30 35B40)},
	MRNUMBER = {1057781},
	MRREVIEWER = {Chris Cosner},
	DOI = {10.2307/2154114},
	URL = {https://doi.org/10.2307/2154114},
}

@article {Fujita66,
	AUTHOR = {Fujita, Hiroshi},
	TITLE = {On the blowing up of solutions of the {C}auchy problem for
	{$u_{t}=\Delta u+u^{1+\alpha }$}},
	JOURNAL = {J. Fac. Sci. Univ. Tokyo Sect. I},
	FJOURNAL = {Journal of the Faculty of Science. University of Tokyo.
	Section I},
	VOLUME = {13},
	YEAR = {1966},
	PAGES = {109--124 (1966)},
	ISSN = {0368-2269},
	MRCLASS = {35.37},
	MRNUMBER = {214914},
	MRREVIEWER = {K. Rektorys},
}

@book {QuitSoup19,
	Author = {Quittner, Pavol and Souplet, Philippe},
	Title = {Superlinear parabolic problems. {Blow}-up, global existence and steady states},
	Edition = {2nd revised and updated edition},
	FSeries = {Birkh{\"a}user Advanced Texts. Basler Lehrb{\"u}cher},
	Series = {Birkh{\"a}user Adv. Texts, Basler Lehrb{\"u}ch.},
	ISSN = {1019-6242},
	ISBN = {978-3-030-18220-5; 978-3-030-18222-9},
	Year = {2019},
	Publisher = {Cham: Birkh{\"a}user},
	DOI = {10.1007/978-3-030-18222-9},
}

@article {FHIL23,
	Author = {Fujishima, Yohei and Hisa, Kotaro and Ishige, Kazuhiro and Laister, Robert},
	Title = {Solvability of superlinear fractional parabolic equations},
	FJournal = {Journal of Evolution Equations},
	Journal = {J. Evol. Equ.},
	ISSN = {1424-3199},
	Volume = {23},
	Number = {1},
	Pages = {38},
	Note = {Id/No 4},
	Year = {2023},
	DOI = {10.1007/s00028-022-00853-z},
}

@article {GMS22,
	author={Ghergu, Marius and Miyamoto, Yasuhito and Suzuki, Masamitsu},
	title={Solvability for time-fractional semilinear parabolic equations with singular initial data},
	journal={Math. Meth. Appl. Sci.},
	volume={46},
	date={2022},
	pages={6686--6704},
	issn={0362-546X},
	review={\MR{3830724}},
	doi={10.1016/j.na.2018.05.011},
}

@article {HIT18,
 AUTHOR = {Hisa, Kotaro and Ishige, Kazuhiro and Takahashi, Jin},
TITLE = {Initial traces and solvability for a semilinear heat equation
on a half space of {$\Bbb R^N$}},
JOURNAL = {Trans. Amer. Math. Soc.},
FJOURNAL = {Transactions of the American Mathematical Society},
VOLUME = {376},
YEAR = {2023},
NUMBER = {8},
PAGES = {5731--5773},
ISSN = {0002-9947,1088-6850},
MRCLASS = {35K58 (35A01 35A21 35K20)},
MRNUMBER = {4630758},
DOI = {10.1090/tran/8922},
}

@article {KozoYama94,	
	Author = {Kozono, Hideo and Yamazaki, Masao},
	Title = {Semilinear heat equations and the {Navier}-{Stokes} equation with distributions in new function spaces as initial data},
	FJournal = {Communications in Partial Differential Equations},
	Journal = {Commun. Partial Differ. Equations},
	ISSN = {0360-5302},
	Volume = {19},
	Number = {5-6},
	Pages = {959--1014},
	Year = {1994},
	DOI = {10.1080/03605309408821042},
}

@article {LaiSie21,
	Author = {Laister, Robert and Sier{\.z}{\k{e}}ga, Miko{\l}aj},
	Title = {A blow-up dichotomy for semilinear fractional heat equations},
	FJournal = {Mathematische Annalen},
	Journal = {Math. Ann.},
	ISSN = {0025-5831},
	Volume = {381},
	Number = {1-2},
	Pages = {75--90},
	Year = {2021},
	DOI = {10.1007/s00208-020-02078-2},
}

@article {MaeTera06,
	Author = {Maekawa, Yasunori and Terasawa, Yutaka},
	Title = {The {Navier}-{Stokes} equations with initial data in uniformly local {{\(L^p\)}} spaces.},
	FJournal = {Differential and Integral Equations},
	Journal = {Differ. Integral Equ.},
	ISSN = {0893-4983},
	Volume = {19},
	Number = {4},
	Pages = {369--400},
	Year = {2006},
}

@article{OkaZhan23,
	title={Existence of solutions for time fractional semilinear parabolic equations in Besov--Morrey spaces}, 
	author={Yusuke Oka and Erbol Zhanpeisov},
	year={2023},
	eprint={2305.05969},
	archivePrefix={arXiv},
	primaryClass={math.AP}
}

@book {GalWar20,
	AUTHOR = {Gal, Ciprian G. and Warma, Mahamadi},
	TITLE = {Fractional-in-time semilinear parabolic equations and
	applications},
	SERIES = {Math\'{e}matiques \& Applications (Berlin) [Mathematics \&
	Applications]},
	VOLUME = {84},
	PUBLISHER = {Springer, Cham},
	YEAR = {[2020] \copyright 2020},
	PAGES = {xii+184},
	ISBN = {978-3-030-45043-4; 978-3-030-45042-7},
	MRCLASS = {35-02 (35K58 35R11)},
	MRNUMBER = {4167508},
	DOI = {10.1007/978-3-030-45043-4},
	URL = {https://doi.org/10.1007/978-3-030-45043-4},
}

@incollection {LuchYama19,
	AUTHOR = {Luchko, Yuri and Yamamoto, Masahiro},
	TITLE = {Maximum principle for the time-fractional {PDE}s},
	BOOKTITLE = {Handbook of fractional calculus with applications. {V}ol. 2},
	PAGES = {299--325},
	PUBLISHER = {De Gruyter, Berlin},
	YEAR = {2019},
	MRCLASS = {35R11 (26A33 35B50 45K05 60G22)},
	MRNUMBER = {3965399},
}

@article {Hayakawa73,
	AUTHOR = {Hayakawa, Kantaro},
	TITLE = {On nonexistence of global solutions of some semilinear
	parabolic differential equations},
	JOURNAL = {Proc. Japan Acad.},
	FJOURNAL = {Proceedings of the Japan Academy},
	VOLUME = {49},
	YEAR = {1973},
	PAGES = {503--505},
	ISSN = {0021-4280},
	MRCLASS = {35K60},
	MRNUMBER = {338569},
}

@article {KoSiTa77,
	AUTHOR = {Kobayashi, Kusuo and Sirao, Tunekiti and Tanaka, Hiroshi},
	TITLE = {On the growing up problem for semilinear heat equations},
	JOURNAL = {J. Math. Soc. Japan},
	FJOURNAL = {Journal of the Mathematical Society of Japan},
	VOLUME = {29},
	YEAR = {1977},
	NUMBER = {3},
	PAGES = {407--424},
	ISSN = {0025-5645,1881-1167},
	MRCLASS = {35K55},
	MRNUMBER = {450783},
	MRREVIEWER = {Joel\ Smoller},
	DOI = {10.2969/jmsj/02930407},
	URL = {https://doi.org/10.2969/jmsj/02930407},
}

@article {ARCD2004,
	AUTHOR = {Arrieta, Jose M. and Rodriguez-Bernal, Anibal and Cholewa, Jan
	W. and Dlotko, Tomasz},
	TITLE = {Linear parabolic equations in locally uniform spaces},
	JOURNAL = {Math. Models Methods Appl. Sci.},
	FJOURNAL = {Mathematical Models and Methods in Applied Sciences},
	VOLUME = {14},
	YEAR = {2004},
	NUMBER = {2},
	PAGES = {253--293},
	ISSN = {0218-2025,1793-6314},
	MRCLASS = {35K25 (35B45 35K05)},
	MRNUMBER = {2040897},
	MRREVIEWER = {Lubomira\ G.\ Softova},
	DOI = {10.1142/S0218202504003234},
	URL = {https://doi.org/10.1142/S0218202504003234},
}

@article {IshiKawaOka2020,
	AUTHOR = {Ishige, Kazuhiro and Kawakami, Tatsuki and Okabe, Shinya},
	TITLE = {Existence of solutions for a higher-order semilinear parabolic
	equation with singular initial data},
	JOURNAL = {Ann. Inst. H. Poincar\'{e} C Anal. Non Lin\'{e}aire},
	FJOURNAL = {Annales de l'Institut Henri Poincar\'{e} C. Analyse Non
	Lin\'{e}aire},
	VOLUME = {37},
	YEAR = {2020},
	NUMBER = {5},
	PAGES = {1185--1209},
	ISSN = {0294-1449,1873-1430},
	MRCLASS = {35K30 (35A01 35K91)},
	MRNUMBER = {4138231},
	MRREVIEWER = {Hans\ Engler},
	DOI = {10.1016/j.anihpc.2020.04.002},
	URL = {https://doi.org/10.1016/j.anihpc.2020.04.002},
}

@article {RobSier2013,
	AUTHOR = {Robinson, James C. and Sier\.{z}\polhk{e}ga, Miko\l aj},
	TITLE = {Supersolutions for a class of semilinear heat equations},
	JOURNAL = {Rev. Mat. Complut.},
	FJOURNAL = {Revista Matem\'{a}tica Complutense},
	VOLUME = {26},
	YEAR = {2013},
	NUMBER = {2},
	PAGES = {341--360},
	ISSN = {1139-1138,1988-2807},
	MRCLASS = {35K91 (35A01 35B65 35K08)},
	MRNUMBER = {3068603},
	DOI = {10.1007/s13163-012-0108-9},
	URL = {https://doi.org/10.1007/s13163-012-0108-9},
}

@incollection {Mainardi1994,
	AUTHOR = {Mainardi, Francesco},
	TITLE = {On the initial value problem for the fractional diffusion-wave
	equation},
	BOOKTITLE = {Waves and stability in continuous media ({B}ologna, 1993)},
	SERIES = {Ser. Adv. Math. Appl. Sci.},
	VOLUME = {23},
	PAGES = {246--251},
	PUBLISHER = {World Sci. Publ., River Edge, NJ},
	YEAR = {1994},
	ISBN = {981-02-1878-8},
	MRCLASS = {35K99},
	MRNUMBER = {1320083},
}

@article{IokuIshigeKawakami,
	title={Existence of solutions to a fractional semilinear heat equation in uniformly local weak Zygmund type spaces}, 
	author={Norisuke Ioku and Kazuhiro Ishige and Tatsuki Kawakami},
	year={to appear},
	JOURNAL = {Anal. PDE},
	FJOURNAL = {Annals of PDE}
}

@book {BennettSharpley,
	AUTHOR = {Bennett, Colin and Sharpley, Robert},
	TITLE = {Interpolation of operators},
	SERIES = {Pure and Applied Mathematics},
	VOLUME = {129},
	PUBLISHER = {Academic Press, Inc., Boston, MA},
	YEAR = {1988},
	PAGES = {xiv+469},
	ISBN = {0-12-088730-4},
	MRCLASS = {46-02 (46E30 46Exx 46M35)},
	MRNUMBER = {928802},
	MRREVIEWER = {Mario\ Milman},
}

@article{HisaKojima24,
	title={On solvability of a time-fractional semilinear heat equation, and its quantitative approach to the classical counterpart}, 
	author={Kotaro Hisa and Mizuki Kojima},
	JOURNAL = {Commun. Pure Appl. Anal.},
	FJOURNAL = {Communications on Pure and Applied Analysis},
	year={to appear},
}

@incollection {Hisa21,
	AUTHOR = {Hisa, Kotaro},
	TITLE = {Sharp estimate of the life span of solutions to the heat
	equation with a nonlinear boundary condition},
	BOOKTITLE = {Geometric properties for parabolic and elliptic {PDE}s},
	SERIES = {Springer INdAM Ser.},
	VOLUME = {47},
	PAGES = {127--149},
	PUBLISHER = {Springer, Cham},
	YEAR = {[2021] \copyright 2021},
	ISBN = {978-3-030-73362-9; 978-3-030-73363-6},
	MRCLASS = {35K05},
	MRNUMBER = {4279618},
}

@article{FujiIshiKawa24,
	title={Existence of solutions for a semilinear parabolic system with singular initial data}, 
	author={Yohei Fujishima and Kazuhiro Ishige and Tatsuki Kawakami},
	year={2024},
	eprint={2407.02847},
	archivePrefix={arXiv},
	primaryClass={math.AP},
	url={https://arxiv.org/abs/2407.02847}, 
}

@article{CarFerNoe24,
	title={A semilinear problem associated to the space-time fractional heat equation in $\mathbb{R}^N$}, 
	author={Carmen Cort\'{a}zar and Fernando Quir\'{o}s and Noem\'{i} Wolanski},
	year={to appear},
	JOURNAL = {Calc. Var. Partial Differential Equations},
	FJOURNAL = {Calculus of Variations and Partial Differential Equations}, 
}

@article {Oneil63,
	AUTHOR = {O'Neil, Richard},
	TITLE = {Convolution operators and {$L(p,\,q)$} spaces},
	JOURNAL = {Duke Math. J.},
	FJOURNAL = {Duke Mathematical Journal},
	VOLUME = {30},
	YEAR = {1963},
	PAGES = {129--142},
	ISSN = {0012-7094,1547-7398},
	MRCLASS = {47.25 (46.35)},
	MRNUMBER = {146673},
	MRREVIEWER = {I.\ I.\ Hirschman, Jr.},
	URL = {http://projecteuclid.org/euclid.dmj/1077374532},
}

@article{IshiKawaTakada24,
	title={Existence of solutions to the fractional semilinear heat equation with a singular inhomogeneous term}, 
	author={Kazuhiro Ishige and Tatsuki Kawakami and Ryo Takada},
	year={2024},
	eprint={2407.17769},
	archivePrefix={arXiv},
	primaryClass={math.AP},
	url={https://arxiv.org/abs/2407.17769}, 
}

\end{document}